\documentclass[12pt,reqno]{amsart}
\usepackage{xifthen,csquotes}
\usepackage{esvect}
\usepackage{bm}
\usepackage{extarrows}
\usepackage{multirow}
\usepackage{subfig}
   \topmargin=0in
   \oddsidemargin=0in
   \evensidemargin=0in
   \textwidth=6.5in
   \textheight=8.5in
\usepackage{pkgfile}

\begin{document}

\title[Signal Detection in Degree corrected ERGMs]{Signal Detection in Degree corrected ERGMs}

\author[Y.~Xu \& S.~Mukherjee]{Yuanzhe Xu and Sumit Mukherjee}

\address{Department of Statistics, Columbia University
\newline\indent 1255 Amsterdam Avenue, New York, NY 10027}

\date{\today}


\keywords{{ERGM, Auxiliary variables, Phase Transition, Signal Detection, Asymptotic Efficiency, Two-star}}


\maketitle

\begin{abstract}
In this paper, we study sparse signal detection problems in \enquote{degree corrected} Exponential Random Graph Models (ERGMs). We study the performance of two tests based on the conditionally centered sum of degrees and conditionally centered maximum of degrees, for a wide class of such ERGMs. The performance of these tests match the performance of the corresponding uncentered tests in the ${\bm \beta}$ model (\cite{mukherjee2018detection}). Focusing on the degree corrected two star ERGM, we show that improved detection is possible at \enquote{criticality} using a test based on (unconditional) sum of degrees. In this setting we provide matching lower bounds in all parameter regimes, which is based on correlations estimates between degrees under the alternative, and of possible independent interest.
%
\end{abstract}
\section{Introduction}

Studying network models has a long and rich history in Statistics, with applications across various disciplines such as Social Science, Biology, Neuroscience, Climatology, and Ecology, to name a few. One of the most well known network models is the Exponential Random Graph Model (often abbreviated as ERGM). ERGMs originated in the Social Science Literature 
(c.f.~\cite{anderson1999p,frank1986markov,holland1981exponential, robins2007introduction,wasserman1994social,wasserman1996logit} and the references there-in), 
and have since then received considerable attention in Statistics and Probability (c.f.~\cite{ chatterjee2013estimating, chatterjee2011random, gotze2021concentration, mukherjee2018detection, mukherjee2018global,schweinberger2020concentration,shalizi2013consistency} and references there-in). ERGMs represent exponential families of distributions the space of simple labeled graphs with a finite dimensional sufficient statistics, which are 
usually taken to be subgraph counts. The simplest class of examples under this framework consists of the one parameter ERGMs, which admits a one dimensional sufficient statistic. Below we start by introducing such a one parameter ERGM:

 Letting $\mathcal{G}_n$ denote the set of all simple labeled graphs $G$ with vertex set $[n]:=\{1,2,...,n\}$, we 
consider the following probability mass function on $\mathcal{G}_n$:
\begin{align}\label{eq:ergm}
\P_{n,{ \theta}}(G):=\frac{1}{Z_n(\theta,H)}\exp\Big\{\theta  \frac{N(H,G)}{n^{\zeta-2}}\Big\}.
\end{align}
Here 
\begin{enumerate}
\item[(i)]
$H$ is a graph of fixed size (such as an edge, triangle, cycle, star, etc.), 

\item[(ii)] $N(H,G)$ is the number of copies of the graph $H$ in the graph $G$, 

\item[(iii)] $\zeta$ is the number of vertices in the graph $H$,

\item[(iv)] $\theta$ is a real valued parameter,

\item[(v)] $Z_n(\theta,H)$ is the normalizing constant.

\end{enumerate}
In particular if the graph $H$ is an edge, the model in \eqref{eq:ergm} is an Erd\H{o}s-R\'enyi model, where the edges of the graph $G$ are i.i.d from suitable a Bernoulli distribution.~For any other choice of $H$, the model in \eqref{eq:ergm} is not an Erd\H{o}s-R\'enyi model since one allows nontrivial dependence between the edges.  An ERGM can thus be thought of as a natural generalization of the Erd\H{o}s-R\'enyi model, which allows for growing degrees of  dependence between edges by through the indexing subraph $H$. It is natural to allow for this dependence while modeling networks, to incorporate features like \enquote{friends of friends are more likely to be friends}.
However, one drawback of ERGMs (or at least the model introduced in \eqref{eq:ergm}) is that the edges of the random graph are still jointly exchangeable, in the sense that permuting the vertices of $G$ does not change the distribution of the graph $G$. Consequently the degree sequence $(d_1(G),\ldots,d_n(G))$ \footnote{$d_i(G)=\sum_{j}G_{ij}$ with $\{G_{ij}\}_{i,j\in [n]}$ being the adjacency matrix of $G$.} marginally have the same distribution for each $i\in [n]$. This may not be desirable for modeling networks where there are a few vertices of very high degree ( (see \cite{bhamidi2015twitter}), compared to the remaining vertices. Such a feature is often present in social networks, where the vertex corresponding to a popular/famous person has a very high degree compared to the remaining vertices.

 One model which captures degree homogeneity is the $\beta$-model of social networks (c.f.~\cite{blitzstein2011sequential,chatterjee2019estimation,chatterjee2011random,mukherjee2018detection,rinaldo2013maximum} and references there-in). The $\beta$-model is defined by the following p.m.f.~on $\mathcal{G}_n$:
\begin{align}\label{eq:beta}
\P_{n,{\bm \beta}}(G):=\frac{1}{Z_n({\bm \beta})}\exp\Big\{\sum_{i=1}^n\beta_i d_i(G)\Big\}.
\end{align}
Here 
\begin{enumerate}
\item[(i)]
$(d_1(G),\ldots,d_n(G))$ is the degree sequence of the graph $G$.

\item[(ii)] ${\bm \beta}=(\beta_1,\ldots,\beta_n)^T\in\mathbb{R}^n$ is a vector valued parameter,

\item[(iii)] $Z_n({\bm \beta})$ is the normalizing constant.

\end{enumerate}

In this model, for each vertex $i\in [n]$ there is a real valued parameter $\beta_i$ which controls the effect of the $i^{th}$ vertex, and consequently the typical size of the degree $d_i(G)$. This allows for heterogeneity among the degrees. A large value of $\beta_i$ results in a large value of the degree of the $i^{th}$ vertex, and vice versa. One drawback of the ${ \beta}$-model \eqref{eq:beta} is that the edges of the graph $G$ are no longer dependent. This is not immediate from \eqref{eq:beta}, but is not hard to check (see for e.g.~\cite{chatterjee2011random}). Thus although the $\beta$-model allows for degree heterogeneity, it does not involve dependence between the edges.

A natural way to retain both the dependence between edges and the heterogeneity of the degrees is to consider an exponential family which has both the terms $\theta N(H,G)$ and $\sum_{i=1}^n\beta_id_i(G)$ in the exponent. Indeed, dependence between edges is present because of the term $\theta N(H,G)$, and degree heterogeneity is present because of the term $\sum_{i=1}^n\beta_id_i(G)$. Such a model, which we introduce formally below, can be thought of as a degree corrected ERGM. 


\subsection{Degree corrected ERGM} 
As before, let $\mathcal{G}_n$ denote the set of all simple labelled graphs $G$ with vertex set $[n]:=\{1,2,...,n\}$,
Given a graph $G\in \mathcal{G}_n$, by slight abuse of notation we use $G$ to also denote the adjacency matrix of $G$, defined as follows:
\begin{align}
G_{ij}=\left\{
\begin{array}{rcl}
1&     &\text{If an edge is present  between vertices $i$ and $j$ in $G$,}\\
0&     &  \text{If no edge is present  between vertices $i$ and $j$ in $G$}.
\end{array} \right.
\end{align} 
Thus, we encode presence or absence of edges by $\{0,1\}$. By convention, set $G_{ii}:=0$, and note that $G$ is a symmetric $n\times n$ matrix with $0$ on the diagonal, and $\{0,1\}$ entries on the off-diagonals.
Let $(d_{1},d_{2},\cdots,d_n)$ denote the labeled degree sequence of the graph $G$, defined by
$$d_{i}:=\sum_{j=1}^{n}G_{ij}, 1\le i\le n.$$
Let $H$ be a fixed connected subgraph with $\zeta:=|V(H)|\ge 2$ (i.e. $H$ is not an isolated vertex). Assume that the vertices of $H$ are labeled as $[\zeta]=\{1,2,\ldots, \zeta\}$. Let $\mathcal{I}_{n}$ denote the 
 the set of all 1-1 maps  from $[n]$ to $[\zeta]$.
 For any $G\in \mathcal{G}_n$, let $N(H,G)$ denote the number of copies of $H$ in $G_n$, defined by 
 $$N(H,G)=\sum\limits_{\iota\in\mathcal{I}_{n}}\prod_{(i,j)\in E(H)}G_{\iota(i),\iota(j)},$$
 where $E(H):=\{(a,b)\in V(H):(a,b)\text{ is an edge in }H\}$ is the edge set of $H$.
 As for illustration, the expression of $N(H,G_n)$ when $H$ is an edge, triangle, and two star (to be denoted by $K_2, K_3, K_{1,2}$ respectively) are respectively given by:
 \begin{align*}
 N(K_2,G)=&\sum_{i\ne j}G_{ij}=2\sum_{i<j}G_{ij}=\sum_{i=1}^nd_i,\\
 N(K_3,G)=&\sum_{i\ne j\ne k}G_{ij} G_{jk} G_{ki}=6 \sum_{i<j<k} G_{ij} G_{jk} G_{ki},\\
 N(K_{1,2},G)=&\sum_{i\ne j\ne k}G_{ij} G_{ik}=2\sum_{i=1}^n \sum_{j<k} G_{ij} G_{ik}=2\sum_{i=1}^n{d_i\choose 2}.
 \end{align*}
 Given a parameter $\theta>0$ and vector $\bm{\beta}=(\beta_{1},\beta_{2},...,\beta_{n})\in \R^n$, we subsequently define a probability mass function on $\mathcal{G}_{n}$ by setting
\begin{align}\label{eq:ERGM}
\P_{n,\theta,{\bm \beta}}(G):=\frac{1}{{Z}_n(\bm{\beta},\theta,H)} \text{exp}\Big\{\frac{\theta}{n^{\zeta-2}}N(H,G)+\sum\limits_{i=1}^{n}\beta_{i}d_i\Big\}.
\end{align}
where as usual ${Z}_n(\bm{\beta},\theta,H)$ is the normalizing constant. The scaling $n^{\zeta-2}$ ensures that the resulting model is non-trivial as $n\to\infty$ (c.f.~\cite{chatterjee2013estimating}). If $\beta_i=\beta_0$ for some $\beta_0\in \R$ free of $i$, then the model in \eqref{eq:ERGM} is an Exponential Random Graph Model with two sufficient statistics $N(H,G)$ and $E(G)$, where $E(G)=\frac{1}{2}N(K_2,G)$ is the number of edges in the graph $G$. 
In this case the random graph $G$ represents a bivariate exchangeable array. More precisely, for any permutation $\pi\in S_n$ the graph $G_\pi$ defined by $G_\pi(i,j):=G_{\pi(i),\pi(j)}$ has the same distribution as $G$, i.e.~$G_\pi\stackrel{D}{=}G$. The vector of parameters $\bm{\beta}$,  therefore,  measures the individual effects of each vertex, and for a general vector $\bm{\beta}$ a random graph $G$ from the model \eqref{eq:ERGM} is no longer exchangeable. 
For $\theta>0$, the term $N(H,G)$ ensures that there is positive dependence among the edges in $G$, in the sense that conditional on presence of an edge, any other edge is more likely to be present.  If $\theta=0$, the model \eqref{eq:ERGM} reduces to the $\beta$-model as in \eqref{eq:beta}, in which all edges $G_{ij}$ are independent, with $$\P_{n,0,\bm{\beta}}(G_{ij}=1)=\frac{e^{\beta_i+\beta_j}}{1+e^{\beta_i+\beta_j}}.$$
Thus the model in \eqref{eq:ERGM} combines the features of the $\beta$-model and traditional ERGMs. We will use the term degree corrected ERGM to refer to the model \eqref{eq:ERGM}.

\subsection{Hypothesis testing problem for  \texorpdfstring{$\bm{\beta}$}{1}}

Given the model \eqref{eq:ERGM}, a natural question is to carry out inference regarding the vector ${\bm \beta}$. In the setting where $\theta=0$, the problem of estimation of ${\bm \beta}$ using the MLE $\hat{\bm \beta}_{ML}$ was studied in \cite{chatterjee2011random}, where the authors gave bounds on $||\hat{\bm \beta}_{ML}-{\bm \beta}||_\infty$. The question of testing of the grand null hypothesis ${\bm \beta}={\bm 0}$ versus non negative sparse alternatives was studied in \cite{mukherjee2018detection}, where the authors show that the optimal test depends on the sparsity level and strength of the signal. Since both these papers assumed $\theta=0$, the edges of the graph $G$ were independent, which was used significantly in the proofs of the results. A natural question is whether one can extend these results in the presence of dependence between edges, i.e. when $\theta>0$. In this paper, we study the question of testing the grand null hypothesis ${\bm \beta}=\beta_0{\bf 1}$ against sparse one sided alternatives. Essentially we want to test the null hypothesis that all nodes in the network are equally popular (have the same $\beta_i$), versus the alternative hypothesis that there is a small hub of nodes which are more popular (have a higher value of $\beta_i$) compared to the baseline popularity $\beta_0$ of the remaining nodes. Here ${\beta_0}\in \R$ is a real valued parameter which is assumed to be known. In section \ref{sec:mc_fs} we briefly discuss what can go wrong if the parameter $\beta_0$ is not assumed to be known.  Below we formally introduce the testing problem discussed above.
\\

Let $\beta_0\in \R$ be known. Let $G$ be a graph drawn from the probability distribution \eqref{eq:ERGM}, and for a known $\theta>0$ and given $\beta_0\in \R$  we consider the following hypothesis testing problem:
\begin{align}\label{Hypothesis_testing}
\mathcal{H}_{0}:\bm{\beta}=\beta_{0}{\bf 1}\text{\ \ \ vs\ \ \ }\mathcal{H}_{1}:\bm{\beta}\in\Xi(s,A).
\end{align}
Here under the null hypothesis we have $\beta_i=\beta_0$ for all $i\in [n]$ and  we denote this null probability measure as $\P_{n,\theta,\beta_{0}}$. The set of vectors $\Xi(s,A)$ in the alternative hypothesis $H_1$ is defined as
\begin{align}\label{Alternative}
\Xi(s,A):=\left\{\bm{\beta}=\beta_0{\bf 1}+{\bm \mu}: |\text{supp}\bm{\mu}|\geq s,\text{ and }\min\limits_{i\in\text{supp}\bm{\mu}}\mu_{i}\geq A\right\}.
\end{align}
In words, under the alternative hypothesis there is a sparse set $S$ of size $s$, such that $\beta_i\ge \beta_0+A$ if $i\in S$, and $\beta_i=\beta_0$ if $i\notin S$.
Our main goal of this paper is to study the effect of the nuisance parameter $\theta$ on the hypothesis testing problem \eqref{Hypothesis_testing}.
For studying the proposed hypothesis testing problem, here we adopt an asymptotic minimax framework similar to \cite{mukherjee2018detection,mukherjee2018global}, which is introduced below (see also \cite{burnavsev1979minimax,ingster1994minimax,ingster1998minimax,ingster2012nonparametric}). 
\\

Given a non randomized test function $T_{n}:\mathcal{G}_n\mapsto \{0,1\}$,
define the risk of test $T_{n}(G)$ as the sum of type I and type II errors, as follows:
\begin{align}\label{Risk}
R(T_{n},\Xi(s,A),\bm{\beta}):=\P_{n,\theta,\beta_{0}}(T_{n}(G)=1)+\sup\limits_{\bm{\beta}\in\Xi(s,A)}\P_{n,\theta,{\bm \beta}}(T_{n}(G)=0).
\end{align}
Given a sequence of test functions $\{T_{n}\}_{n\geq1}$ for the testing problem \eqref{Hypothesis_testing}, 
we call $\{T_{n}\}_{n\geq1}$ as 
\begin{enumerate}
\item[(i)] Asymptotically Powerful, if 
\begin{align}\label{Asymp_Power}
\lim\limits_{n\rightarrow\infty}R(T_{n},\Xi(s,A),\bm{\beta})=0;
\end{align}
\item[(ii)] Asymptotically not Powerful, if
\begin{align}\label{Asymp_Power0}
\liminf\limits_{n\rightarrow\infty}R(T_{n},\Xi(s,A),\bm{\beta})>0;
\end{align}
\item[(iii)] Asymptotically Powerless, if
\begin{align}\label{Asymp_Powerless}
\lim_{n\rightarrow\infty}R(T_{n},\Xi(s,A),\bm{\beta})=1.
\end{align}
\end{enumerate}
By definition,  both type I and type II errors converge to 0 for asymptotically powerful tests. Also, if a sequence of tests is asymptotically powerless, then it is also asymptotically not powerful, and so (iii) is a stronger notion than (ii).

\subsection{Main Results}\label{Results}

In this section we present and discuss our main results. To that end, we first consider general degree corrected ERGMs and analyze the performance of two natural tests. 
 We then focus on a particular degree corrected ERGM, where the graph $H$ is a two star. In this setting we show that the general tests studied above attains the \enquote{optimal detection boundary} for all configurations $(\theta,\beta_0)$ barring a specific point, which we refer to as the critical point/configuration. At this point, using a slightly different test from the ones studied under the general ERGM framework, we are able to detect much lower signals, compared to the independent case $(\theta=0$). 
\subsubsection{General degree corrected ERGMs}
In this section, we discuss the hypothesis testing problem \eqref{Hypothesis_testing} in the setting of general degree corrected ERGMs as in \eqref{eq:ERGM}. Specifically, we will show how signal density and strength $(s,A)$ coordinate to determine the threshold for testing efficiency. Two natural test statistics for this problem are the sum of degrees $\sum_{i=1}^nd_i(G)$, and the maximum degree $\max_{i\in [n]}d_i(G)$. However, because of the presence of dependence, it is very difficult to calibrate the cut-off for these statistics, as they depend on the parameter $\theta$ in a non-trivial way. To counter this, we use conditionally centered versions of the sum of degrees, and the maximum degree, similar to what was done in \cite{mukherjee2018global}. 
\\

Our first theorem studies the performance of a test based on the  conditionally centered sum of degrees. For stating the result we require a few notations.
\begin{defn}
Let $\mathcal{E}:=\{(i,j): 1\le i<j\le n\}$ be the set of all edges in the complete graph $K_n$. For any $e=(i,j)\in \mathcal{E}$, 
 let $N_e(H,G)$  denote the number of copies of $H$ in the graph $G$ which contains the edge $e$, and let $N_{e,f}(H,G)$ denote the number of copies of $H$ in the graph $G$ which contains both the edges $e,f$. 

Setting $\psi(x):= \frac{e^x}{1+e^x}$ for $x\in \R$, for any $e=(i,j)\in \mathcal{E}$ we have
\begin{align}\label{conditional_mean}
\E_{n,\theta,{\bm \beta}}\Big(G_{e}\big|G_{f}:f\neq e\Big)=\psi(\theta t_e(H,G)+\beta_i+\beta_j),
\end{align}
where $t_e(H,G):=\frac{N_e(H,G)}{n^{\zeta-2}}$.
\end{defn}
Since our results are asymptotic in nature, below we introduce some standard notations, to be used in the remainder of the paper. 
\begin{defn}
Given two sequence of real numbers $\{a_n\}_{n\ge 1}$ and $\{b_n\}_{n\ge 1}$, we use the notation $a_n=O(b_n)$ or $a_n\lesssim b_n$ to imply the existence of a positive finite constant $c$ free of $n$, such that $a_n\le c b_n$. We use the notation $a_n\gg b_n$ ($a_n\ll b_n$) to imply $\lim_{n\to\infty} \frac{a_n}{b_n}=\infty$ ($\lim_{n\to\infty}\frac{a_n}{b_n}= 0$ respectively). 
\end{defn}
\begin{thm}\label{Centered_mean_test}
With $G$ from the model \eqref{eq:ERGM}, consider the hypothesis testing problem described in \eqref{Hypothesis_testing}. If $sA\rightarrow\infty$, then for any sequence $L_n$ such that 
$ n\ll L_n\ll nsA$ the conditionally centered sum of degrees test $T_n(G)$ given by
\begin{align*}
T_n(G)=&1\text{ if }\sum\limits_{e\in \mathcal{E}}\bigg[G_{e}-\E_{n,\theta,\bm{\beta}_{0}}\big(G_{e}\big|G_{f}:f\neq e\big)\bigg]>L_{n},\\
=&0\text{ otherwise}
\end{align*}
is asymptotically powerful.
\end{thm}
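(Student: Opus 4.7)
The plan is to bound the Type I and Type II errors of $T_n$ via first and second moment calculations under each hypothesis.

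For Type I error, set $\tilde{G}_e := G_e - \psi(\theta t_e(H,G) + 2\beta_0)$, so that $T_n = \sum_e \tilde{G}_e$. By iterated expectation on $\sigma(G_f : f \ne e)$ together with \eqref{conditional_mean} at $\boldsymbol\beta=\beta_0\mathbf{1}$, we have $\mathbb{E}_{n,\theta,\beta_0}[\tilde{G}_e] = 0$ and hence $\mathbb{E}_{n,\theta,\beta_0}[T_n] = 0$. The main step is showing $\mathrm{Var}(T_n) = O(n^2)$, after which Chebyshev combined with $L_n \gg n$ concludes. The diagonal variance terms are each bounded by $\mathbb{E}[\pi_e(1-\pi_e)] \le 1/4$, contributing $O(n^2)$ in total. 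For the off-diagonal covariances, I would introduce the auxiliary variable $\tilde{G}_{e'}^{(e)} := G_{e'} - \psi(\theta t_{e'}(H, G^{(e)}) + 2\beta_0)$, where $G^{(e)}$ denotes $G$ with $G_e$ set to $0$. Since $\tilde{G}_{e'}^{(e)}$ is measurable with respect to $\sigma(G_f : f \ne e)$, the tower property yields $\mathbb{E}[\tilde{G}_e \tilde{G}_{e'}^{(e)}] = 0$, so
\[
\mathbb{E}[\tilde{G}_e \tilde{G}_{e'}] = -\mathbb{E}\Bigl[\tilde{G}_e\bigl(\psi(\theta t_{e'} + 2\beta_0) - \psi(\theta t_{e'}^{(e)} + 2\beta_0)\bigr)\Bigr].
\]
Using $|\psi'| \le 1/4$ and the exact identity $t_{e'}(H,G) - t_{e'}(H, G^{(e)}) = N_{e,e'}(H,G)/n^{\zeta-2}$, this covariance is bounded by $(\theta/4)\,\mathbb{E}[N_{e,e'}(H,G)]/n^{\zeta-2}$. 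Summing over $e \ne e'$ and using the combinatorial identity $\sum_{e \ne e'} N_{e,e'}(H,G) = |E(H)|(|E(H)|-1)\,N(H,G)$ together with $N(H,G) \le n^\zeta$ yields a total off-diagonal contribution of $O(n^2)$.

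For Type II error, under $\mathcal{H}_1$ the drift equals
\[
\mathbb{E}_{n,\theta,\boldsymbol\beta}[T_n] = \sum_{e=(i,j)} \mathbb{E}_{n,\theta,\boldsymbol\beta}\bigl[\psi(\theta t_e + \beta_i + \beta_j) - \psi(\theta t_e + 2\beta_0)\bigr],
\]
with zero contribution from edges having both endpoints outside $S := \mathrm{supp}(\boldsymbol\mu)$. For the $\Theta(sn)$ edges incident to $S$, on the high-probability event $\{\theta t_e + 2\beta_0 \in [-C, C]\}$, to be established via a concentration estimate for $t_e$ under $\mathbb{P}_{n,\theta,\boldsymbol\beta}$ with sparse perturbation of $\beta_0\mathbf{1}$, the mean value theorem and positivity of $\psi'$ on compact sets yield a per-edge drift of order at least $A$ in the primary regime, giving $\mathbb{E}_{n,\theta,\boldsymbol\beta}[T_n] \gtrsim nsA$. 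The variance under $\mathcal{H}_1$ is then controlled at $O(n^2)$ by decomposing $\tilde{G}_e = \bigl(G_e - \mathbb{E}_{\boldsymbol\beta}(G_e\mid G_f:f\ne e)\bigr) + \bigl(\mathbb{E}_{\boldsymbol\beta}(G_e\mid G_f:f\ne e) - \psi(\theta t_e + 2\beta_0)\bigr)$; the first summand has conditional mean zero under $\boldsymbol\beta$ and is handled exactly as in the null, while the second is a bounded function of the graph whose variance is governed by the same Lipschitz covariance bound. Chebyshev with $L_n \ll nsA$ then drives the Type II error to zero.

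The main obstacle is the uniform-in-$e$ concentration of $t_e(H,G)$ under the dependent measure $\mathbb{P}_{n,\theta,\boldsymbol\beta}$ with heterogeneous $\boldsymbol\beta$: the subgraph statistic $N_e(H,G)$ couples a growing number of edges and its fluctuations do not decompose into independent pieces. I expect this to rely on mean-field or exchangeable-pair estimates plausibly developed earlier in the paper, and on a perturbation argument showing that the sparse departure $\boldsymbol\mu$ does not materially alter the bulk behavior of $t_e$ relative to the null ERGM. The combinatorial bookkeeping behind $\sum_{e,e'} N_{e,e'}(H,G)$ must also be adapted carefully to the specific connected $H$ at hand.
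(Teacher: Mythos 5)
Your Type~I argument is a valid alternative to the paper's: you use a direct variance computation plus Chebyshev, whereas the paper invokes an exchangeable-pair concentration inequality (Lemma~\ref{Concen}, via \cite{chatterjee2006stein}) that yields subgaussian tails. The covariance trick with $\tilde{G}_{e'}^{(e)}$ and the combinatorial identity $\sum_{e\ne e'} N_{e,e'}(H,G) = |E(H)|\,(|E(H)|-1)\,N(H,G)$ are both sound, and for this theorem the weaker Chebyshev bound suffices since all we need is $\mathrm{Var}(T_n)/L_n^2\to 0$ with $L_n\gg n$. The paper's Stein-method lemma is somewhat more general (it handles arbitrary weights $c_e$ and is reused in Theorem~\ref{Centered_max_test}), but for Theorem~\ref{Centered_mean_test} alone your route is a legitimate and more elementary substitute.

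Your Type~II argument, however, has a genuine gap. You treat $t_e(H,G)$ as a random quantity requiring a concentration estimate, restrict to the ``high-probability event'' $\{\theta t_e + 2\beta_0 \in [-C,C]\}$, and flag the concentration of $t_e$ under the alternative as ``the main obstacle.'' But there is no obstacle: $t_e(H,G)=N_e(H,G)/n^{\zeta-2}$ satisfies the \emph{deterministic} bound $0\le t_e(H,G)\le t_e(H,K_n)=O(1)$ for every graph $G$, since fixing an edge $e$ and counting copies of $H$ through $e$ in $K_n$ leaves only $\zeta-2$ free vertices. Once this is observed, $\psi(\theta t_e + \beta_i+\beta_j)-\psi(\theta t_e + 2\beta_0)\ge\delta\min\{\beta_i+\beta_j-2\beta_0,1\}$ holds \emph{pointwise in $G$} for a constant $\delta>0$ depending only on $\theta$ and $H$, because $\psi'$ is bounded below on any compact interval. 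Summing over edges incident to the support $S$ then gives a deterministic drift $\ge\delta\,nsA$. This also obviates your variance bound for the ``drift'' term $\E_{\boldsymbol\beta}(G_e\mid G_f:f\ne e)-\psi(\theta t_e+2\beta_0)$, whose justification as stated (``governed by the same Lipschitz covariance bound'') is hand-wavy and circular: it would need its own Efron--Stein or Stein argument, which is precisely what the deterministic lower bound avoids. The clean path is: decompose $T_n=\sum_e[G_e-\E_{\boldsymbol\beta}(G_e\mid\cdot)]+\sum_e[\E_{\boldsymbol\beta}(\cdot)-\E_{\beta_0}(\cdot)]$, where the second sum is deterministically $\ge\delta nsA\gg L_n$, and the first is conditionally centered under the alternative and concentrates by the same argument you (or the paper) used under the null.
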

In settings where the signal size $s$ is small, a test based on the conditionally centered maximum of degrees can sometimes detect lower signals. The performance of this test is studied in our second result.
\begin{thm}\label{Centered_max_test}
With $G$ from the model \eqref{eq:ERGM}, consider the hypothesis testing problem described in \eqref{Hypothesis_testing}. Then there exists constants $\kappa,C$  such that if $A\geq\kappa\sqrt{\frac{\log n}{n}}$ and $L_n= C\sqrt{n\log n}$, then the conditionally centered maximum degree test defined by 
\begin{align}\label{conditional_max_test}
T_{n}(G)=\left\{
\begin{array}{rcl}
1&     &  \text{If\ \ } \max\limits_{i\in[n]}\sum\limits_{e\ni i}\Big[G_{e}-\E_{n,\theta,\bm{\beta}_{0}}\big(G_{e}\big|G_{f}:f\neq e\big)\Big]>L_{n}\\
0&     &  \text{Otherwise}
\end{array} \right.
\end{align}
 is asymptotically powerful.
%
%
%
\end{thm}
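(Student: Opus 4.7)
\medskip
\noindent\textbf{Proof proposal.} The plan is to separately control the type I and type II errors of $T_n$, with the common technical ingredient being a sub-Gaussian tail bound for each vertex-centered sum
\[
D_i(\bm{\gamma}) \;:=\; \sum_{j\ne i}\bigl[G_{ij}-\psi\bigl(\theta\, t_{(i,j)}(H,G)+\gamma_i+\gamma_j\bigr)\bigr],
\]
valid whenever we center at the conditional mean under the \emph{same} parameter $\bm{\gamma}$ under which $G$ is drawn. The test statistic is $\max_i D_i(\beta_0\mathbf 1)$, with $L_n=C\sqrt{n\log n}$.

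\medskip
\noindent\emph{Type I.} Under $\mathcal H_0$ the statistic is exactly $\max_i D_i(\beta_0\mathbf 1)$, and by construction each summand $Y_{ij}:=G_{ij}-\E_{n,\theta,\beta_0}(G_{ij}\mid G_f:f\ne(i,j))$ has zero conditional mean given all other edges. First I would establish a concentration inequality of the form
\[
\P_{n,\theta,\beta_0}\bigl(|D_i(\beta_0\mathbf 1)|>t\bigr) \;\le\; 2\exp\bigl(-c\,t^2/n\bigr),
\]
for some constant $c=c(\theta,\beta_0,H)>0$. The cleanest route is a Doob martingale in which one reveals the $n-1$ edges incident to vertex $i$ in some prescribed order (conditional on all non-incident edges); the resulting increments are bounded by $1$, and conditional centering ensures their conditional means vanish, so Azuma--Hoeffding applies. (Alternatively, one may use the exchangeable-pairs / conditional-Efron--Stein technique of \cite{mukherjee2018global}.) A boundedness check on $\theta t_{(i,j)}(H,G)$---which for connected $H$ is uniformly $O(1)$ in probability under both hypotheses---keeps the variance proxy under control. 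Plugging in $t=L_n=C\sqrt{n\log n}$ and a union bound over $i\in[n]$ yields $\P_{n,\theta,\beta_0}(\max_i D_i(\beta_0\mathbf 1)>L_n)\le 2n^{1-C^2/c}\to 0$ for $C$ large.

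\medskip
\noindent\emph{Type II.} Fix $\bm{\beta}\in\Xi(s,A)$ with support $S$, and pick any $i^*\in S$, so $\beta_{i^*}\ge\beta_0+A$. Write
\[
D_{i^*}(\beta_0\mathbf 1) \;=\; D_{i^*}(\bm{\beta}) \;+\; B_{i^*},\qquad
B_{i^*}:=\sum_{j\ne i^*}\bigl[\psi\bigl(\theta t_{(i^*,j)}(H,G)+\beta_{i^*}+\beta_j\bigr)-\psi\bigl(\theta t_{(i^*,j)}(H,G)+2\beta_0\bigr)\bigr].
\]
The term $D_{i^*}(\bm{\beta})$ is the conditionally centered degree under the \emph{true} law $\P_{n,\theta,\bm{\beta}}$, hence the same martingale/conditional-centering argument used for type I gives $|D_{i^*}(\bm{\beta})|\le L_n$ with probability $1-o(1)$ under $\P_{n,\theta,\bm{\beta}}$. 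For $B_{i^*}$, monotonicity of $\psi$ makes every summand non-negative, and for the at least $n-s\ge n/2$ indices $j\notin S\cup\{i^*\}$ we have $\beta_j=\beta_0$, so the summand equals $\psi(\theta t_{(i^*,j)}+\beta_0+\beta_{i^*})-\psi(\theta t_{(i^*,j)}+2\beta_0)\ge c\,A$ by the mean value theorem, where $c>0$ is a lower bound on $\psi'$ on the (almost surely bounded) range of $\theta t_{(i^*,j)}(H,G)+O(1)$. Thus $B_{i^*}\gtrsim nA\ge \kappa\sqrt{n\log n}$, and choosing $\kappa$ large enough that $B_{i^*}\ge 2L_n$ forces $D_{i^*}(\beta_0\mathbf 1)\ge L_n$ on the high-probability event $|D_{i^*}(\bm{\beta})|\le L_n$, so $T_n(G)=1$ with probability $1-o(1)$.

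\medskip
\noindent\emph{Main obstacle.} The heart of the argument is the sub-Gaussian tail bound on the conditionally centered vertex sum $D_i(\bm{\gamma})$ under $\P_{n,\theta,\bm{\gamma}}$. Unlike the $\beta$-model ($\theta=0$) the summands are not independent, and the dependence enters through the random quantity $t_{(i,j)}(H,G)$; one must show that conditional centering suffices to tame this dependence at the scale $\sqrt{n\log n}$. A secondary obstacle is uniformly bounding $\theta t_{(i,j)}(H,G)$ (so that $\psi'$ stays away from $0$ on the relevant range) simultaneously under the null and over all $\bm{\beta}\in\Xi(s,A)$; the sparsity $s$ of the perturbation and the fixed size of $H$ are what let the $\beta_0$-based global a priori bound continue to work under the alternative.
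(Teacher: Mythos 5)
Your overall architecture matches the paper's: for type~I, a sub-Gaussian tail bound for each vertex-centered sum plus a union bound over $i\in[n]$; for type~II, fix $i^*\in S$, decompose the null-centered statistic as the $\bm\beta$-centered fluctuation plus a deterministic-sign bias term, bound the bias below by $\gtrsim nA$ via monotonicity of $\psi$ and the (deterministic, not merely with-high-probability) bound $t_e(H,G)\le t_e(H,K_n)=O(1)$, and reuse the concentration bound under $\P_{n,\theta,\bm\beta}$. These are exactly the steps in the paper's proof, with Lemma~\ref{Concen} (taking $c_e=1\{e\ni i\}$) playing the role of your tail bound.

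The one genuine gap is in your ``cleanest route'' for the concentration inequality. A Doob martingale revealing the $n-1$ edges incident to $i$ does start at zero (tower property), but bounding its increments is \emph{not} automatic in the presence of dependence. The increment $M_k-M_{k-1}=\E[D_i\mid\mathcal F_{k-1},G_{ik}]-\E[D_i\mid\mathcal F_{k-1}]$ involves conditional expectations over the \emph{remaining} incident edges, whose conditional law given $\mathcal F_{k-1}$ shifts as $G_{ik}$ changes; controlling this requires a coupling/Dobrushin-type estimate, not just ``each summand is bounded by $1$.'' This is precisely the obstruction that Chatterjee's exchangeable-pairs method is designed to bypass, and it is what the paper uses in Lemma~\ref{Concen}: one constructs an exchangeable pair $(G,G')$ by resampling a single uniformly chosen edge and bounds $\E[(h(G)-h(G'))(J(G)-J(G'))\mid G]$ via the combinatorial control of $N_{e,f}(H,K_n)$, which together with the scaling $n^{\zeta-2}$ yields a variance proxy of order $n$. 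You do flag the exchangeable-pairs route as an alternative; it should in fact be your primary route, and the Doob/Azuma--Hoeffding argument should not be asserted as routine without a coupling estimate to justify the bounded increments.

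A minor point: uniform control of $\theta t_{(i,j)}(H,G)$ is not a ``secondary obstacle'' and requires no probabilistic argument: $t_e(H,G)\le t_e(H,K_n)\lesssim 1$ deterministically since $H$ is fixed, so $\psi'$ is bounded below on the relevant range regardless of hypotheses.
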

Comparing Theorem \ref{Centered_mean_test} and \ref{Centered_max_test} yields that the conditionally centered maximum degree test is better (has a lower detection boundary) for sparser alternative ($s\ll\sqrt{\frac{n}{\log n}}$), and the conditionally centered sum of degrees test is better for denser alternatives ($s\gg\sqrt{\frac{n}{\log n}})$. This is similar to the findings of \cite{mukherjee2018detection}, where it was shown that optimal rate detection is obtained by the sum of degrees if $s=n^{b}$ with $b> 1/2$ (see \cite[Theorem 3.1]{mukherjee2018detection}), and by the maximum degree test if $b<1/2$ (see \cite[Theorem 3.3]{mukherjee2018detection}).

%

\subsubsection{Degree Corrected Two-star ERGM}\label{phi}

In Theorems \ref{Centered_mean_test} and \ref{Centered_max_test}, there is no effect of the nuisance parameter $\theta$ on the detection rate of the tests. To demonstrate that the best possible detection rate can change depending on the value of $\theta$, we study in detail the degree corrected two star ERGM, The two star is the graph $K_{1,2}$, which is a path of length 3.
For notational and computational convenience, for the Degree Corrected Two-star ERGM our edge variables take values in $\{-1,1\}$ instead of $\{0,1\}$. More precisely, given a graph $G\in \mathcal{G}_n$, our adjacency matrix $Y$ is now defined as follows:
\begin{align*}
Y_{ij}=&+1\text{ if }(i,j)\text{ is an edge in }G,\\
=&-1\text{ if }(i,j)\text{ is not an edge in }G.
\end{align*}
As before, we set $Y_{ii}=0$ by convention. Thus $Y$ is a symmetric matrix with $\{-1,1\}$ entries, and $0$ on the diagonal. 
Let $(k_{1},k_{2},\cdots,k_n)$ denote the labeled \enquote{degree sequence} of the graph $Y$, i,e, 
$$k_{i}:=\sum_{j=1}^{n}Y_{ij}, 1\le i\le n.$$ 
The following display introduces the degree corrected two star ERGM as a p.m.f.~on $\{-1,1\}^{n\choose 2}$:
\begin{align}\label{Y:edge_star}
\P_{n,\theta,\bm{\beta}}(Y)=\frac{1}{Z_{n}(\bm{\beta},\theta)}\exp\left\{\frac{\theta}{n-1}\widetilde{N}(K_{1,2},G_n)+\frac{1}{2}\sum\limits_{i=1}^{n}\beta_{i}k_{i}\right\},
\end{align}
where $$\widetilde{N}(K_{1,2},G_n):=\sum_{i=1}^n \sum_{j<k} Y_{ij} Y_{ik}=\frac{1}{2}\sum_{i=1}^nk_i^2-\frac{n(n-1)}{2}.$$
Having observed $Y$, consider the same hypothesis testing problem \eqref{Hypothesis_testing} as above.
For the sake of clarity of presentation, in this section we parametrize the signal size $s$ and signal strength $A$ by $n^b$ and $n^{t}$ respectively, where $b\in(0,1)$ and $t<0$.
The detection boundary for this problem shows a phase transition depending on the nuisance parameter $\theta$. Stating this requires the following partitioning of the parameter space for  $(\theta,\beta_0)$:
\begin{defn}
\noindent
\begin{itemize}
    \item Let $\Theta_1=\Theta_{11}\cup \Theta_{12}$, where $\Theta_{11}:=(0,1/2)\times \{0\}$, and $\Theta_{12}=\{(\theta,\beta_0):\theta>0, \beta_0\ne 0\}.$
    
   \item
    Let $\Theta_2:= (1/2,\infty)\times \{0\}$. 
    \item 
    Let $\Theta_3:=(1/2,0)$ 
   Note that $\Theta_1\cup \Theta_2\cup\Theta_3=(0,\infty)\times \R$.
\end{itemize}
\end{defn}
Our first result describes the detection boundary for the degree corrected two star ERGM if $(\theta,\beta_0)\in\Theta_1$.
\begin{thm}\label{Unique_dense}
Let $Y$ be an observation from from \eqref{Y:edge_star}, and assume $(\theta,\beta_{0})\in \Theta_1$. Consider the hypothesis testing problem  described in \eqref{Hypothesis_testing} with $s=n^{b}$ and $A=n^t$ for $\in (0,1)$ and $t<0$.

\begin{enumerate}[(a)]
\item If $b\ge \frac{1}{2}$ and $b+t<0$, all tests are asymptotically powerless.

\item If $b\ge \frac{1}{2}$ and $b+t>0$, then the conditionally centered sum test of Theorem \ref{Centered_mean_test} is asymptotically powerful.

\item If $b<\frac{1}{2}$ and $t+\frac{1}{2}\le 0$ then all tests are asymptotically powerless.

\item If $b<\frac{1}{2}$ and $t+\frac{1}{2}> 0$ then the conditionally centered max test of Theorem \ref{Centered_max_test} is asymptotically powerful.

\end{enumerate}
\end{thm}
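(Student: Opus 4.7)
The proof splits into upper bounds (parts (b), (d)) and lower bounds (parts (a), (c)), with the lower bounds requiring the main work. The upper bounds are immediate from the general results: for (b), the hypothesis $b+t>0$ gives $sA=n^{b+t}\to\infty$, so Theorem \ref{Centered_mean_test} applies with any choice $n\ll L_n\ll nsA$; for (d), $t+\tfrac{1}{2}>0$ implies $n^{t+1/2}/\sqrt{\log n}\to\infty$, hence $A=n^t$ eventually exceeds $\kappa\sqrt{(\log n)/n}$ for the constant $\kappa$ of Theorem \ref{Centered_max_test}, which then applies.

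For the lower bounds in (a) and (c), the plan is to use Le Cam's second-moment method. Place the uniform prior on the set $\{\bm\beta_S:=\beta_0\mathbf{1}+A\mathbf{1}_S: S\subset[n],\,|S|=s\}$, let $\bar\P$ denote the resulting mixture alternative, and set $\bar L(Y):=d\bar\P/d\P_{n,\theta,\beta_0}(Y)$. Since $d\P_{n,\theta,\bm\beta_S}/d\P_{n,\theta,\beta_0}(Y)=\tfrac{Z_n(\beta_0,\theta)}{Z_n(\bm\beta_S,\theta)}\exp\!\bigl(\tfrac{A}{2}\sum_{i\in S}k_i\bigr)$, a standard computation yields
\begin{equation*}
\E_{n,\theta,\beta_0}[\bar L^2]=\E_{S_1,S_2}\!\left[\frac{Z_n(\beta_0,\theta)\,Z_n(\beta_0\mathbf{1}+A(\mathbf{1}_{S_1}+\mathbf{1}_{S_2}),\theta)}{Z_n(\bm\beta_{S_1},\theta)\,Z_n(\bm\beta_{S_2},\theta)}\right],
\end{equation*}
with $S_1,S_2$ independent and uniform, so it suffices to prove $\E_{n,\theta,\beta_0}[\bar L^2]\to 1$.

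To analyze these partition-function ratios we would invoke the Hubbard--Stratonovich identity: combining $\widetilde N(K_{1,2},G)=\tfrac{1}{2}\sum_i k_i^2-\tfrac{n(n-1)}{2}$ with $e^{ck_i^2/2}=\E_{W_i\sim N(0,1)}[e^{\sqrt{c}\,W_i k_i}]$ yields the auxiliary-variable representation
\begin{equation*}
Z_n(\bm\beta,\theta)=e^{-n\theta/2}\,\E_{\mathbf W}\prod_{i<j}2\cosh\!\Bigl(\sqrt{\tfrac{\theta}{n-1}}(W_i+W_j)+\tfrac{\beta_i+\beta_j}{2}\Bigr).
\end{equation*}
The condition $(\theta,\beta_0)\in\Theta_1$ is precisely what guarantees that the associated mean-field variational problem has a unique, non-degenerate critical point --- the origin when $\beta_0=0$ and $\theta<\tfrac{1}{2}$, and a unique nonzero $m^*=m^*(\theta,\beta_0)$ when $\beta_0\neq 0$. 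A Laplace/saddle-point analysis about this critical point should then produce a uniform quadratic expansion
\begin{equation*}
\log Z_n(\beta_0\mathbf{1}+A\mathbf v,\theta)-\log Z_n(\beta_0,\theta)=A\,\langle\nabla F,\mathbf v\rangle+\tfrac{A^2}{2}\,\mathbf v^\top H\mathbf v+o(A^2\|\mathbf v\|^2),
\end{equation*}
valid for $\mathbf v\in\{0,1\}^n$ of the form arising above, with an effective Hessian of the form $H=aI+bJ+E$, for constants $a,b$ depending on $(\theta,\beta_0)$ and an error matrix $E$ whose entries are $O(1/n)$.

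Plugging this expansion into the chi-squared identity cancels the linear pieces and reduces the problem to
\begin{equation*}
\E_{n,\theta,\beta_0}[\bar L^2]\;\approx\;\E_{S_1,S_2}\exp\!\bigl(aA^2|S_1\cap S_2|+bA^2 s^2+\mathbf 1_{S_1}^\top(A^2 E)\mathbf 1_{S_2}\bigr),
\end{equation*}
which tends to $1$ by standard hypergeometric moment-generating-function bounds: in (a), $A^2 s^2=n^{2(b+t)}\to 0$ and $A^2 n=n^{2t+1}\to 0$ (since $t<-b\leq-\tfrac{1}{2}$); in (c), $A^2 n=O(1)$ while $A^2 s^2\leq n^{2b-1}\to 0$, and a Chernoff bound on the hypergeometric $|S_1\cap S_2|$ (mean $s^2/n$) still gives $\E\exp(aA^2|S_1\cap S_2|)\leq\exp\!\bigl(O(A^2 s^2)\bigr)\to 1$. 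The main technical obstacle is establishing the uniform quadratic expansion of $\log Z_n$: in $\Theta_{11}$ one exploits the subcriticality $\theta<\tfrac{1}{2}$ to ensure a positive-definite Gaussian integrand at the origin, whereas in $\Theta_{12}$ one must first locate the correct saddle via the mean-field equation (whose stability relies on $\beta_0\neq 0$) and then control the fluctuations of $\prod\cosh$ about it uniformly in the signal perturbation $\mathbf v$.
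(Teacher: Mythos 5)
Your argument is correct and matches the paper exactly: $b+t>0$ gives $sA\to\infty$ so Theorem~\ref{Centered_mean_test} applies, and $t+\tfrac12>0$ gives $A\gg\sqrt{(\log n)/n}$ so Theorem~\ref{Centered_max_test} applies.

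\textbf{Lower bounds (parts (a), (c)):} Your starting point --- uniform prior on $\{\bm\beta_S\}$, second-moment method, the chi-squared identity for $\E[\bar L^2]$ in terms of ratios of partition functions --- coincides with the paper's Lemma~\ref{LBlemma}. From there your route diverges, and the divergence opens a genuine gap. You propose a Hubbard--Stratonovich / Laplace saddle-point argument to produce a \emph{uniform quadratic expansion} of $\log Z_n(\beta_0\mathbf 1+A\mathbf v)$ with a controlled $o(A^2\|\mathbf v\|^2)$ remainder, but you never establish this; you acknowledge it as ``the main technical obstacle'' and leave it there. Controlling that remainder amounts to bounding the third cumulant of $\sum_i v_i k_i$, which is precisely what is hard. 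The paper sidesteps the issue entirely: it applies the fundamental theorem of calculus \emph{twice} to write $R_{S_1,S_2}$ as an exact double integral of covariances $Cov_{\bm\beta(\gamma)}(k_j,k_r)$, with no remainder, and then uses the \emph{GHS inequality} (applicable because $A\to 0$ keeps $\bm\beta$ of a single sign, which is what the restriction to $\Theta_1$ combined with $t<0$ buys) to replace the perturbed covariances by the null covariances $Cov_{(\beta_0/2)\mathbf 1}(k_j,k_r)$. GHS does not appear in your proposal at all, yet it is the mechanism that gives the one-sided comparison a Taylor expansion cannot deliver for free.

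There is a second omission. Even granting your expansion, the argument needs $Var_{\mathcal H_0}(k_1)=O(n)$ and $Cov_{\mathcal H_0}(k_1,k_2)=O(1)$ (these are the sizes of the Hessian entries; in your notation ``constants $a,b$'' is misleading since the diagonal is $\Theta(n)$, though your subsequent tracking of $A^2n$ suggests you realize this). These covariance bounds are themselves nontrivial for the two-star model; the paper imports them from \cite[Lemma 4.4]{Sumit2020two}, which exploits exchangeability of the null law. You would need to either re-derive them from your Gaussian representation or cite them; as written the argument assumes them silently.

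In short: your chi-squared bookkeeping and the final hypergeometric moment-generating-function calculation are sound and match the paper, but the central step --- converting the partition-function ratios into the null covariance quantities --- is asserted rather than proved, and the tool (GHS) that makes it work in exactly the regime $\Theta_1$ with $A\to0$ is absent from the proposal. To close the gap you should either (i) use the paper's exact two-fold integral identity for $R_{S_1,S_2}$ together with GHS, or (ii) genuinely control the third-order remainder in your Laplace expansion (which, in effect, requires GHS-type monotonicity anyway), and in either case supply the null variance/covariance estimates.
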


Our second result describes the detection boundary for the degree corrected two star ERGM if $(\theta,\beta_0)\in\Theta_2$.

\begin{thm}\label{Nonunique_dense}
Let $Y$ be an observation from from \eqref{Y:edge_star}, and assume $(\theta,\beta_{0})\in \Theta_2$. Consider the hypothesis testing problem  described in \eqref{Hypothesis_testing} with $s=n^{b}$ and $A=n^t$ for $\in (0,1)$ and $t<0$.

\begin{enumerate}[(a)]
\item If $b\ge \frac{1}{2}$ and $b+t<0$, all tests are asymptotically not powerful.

\item If $b\ge \frac{1}{2}$ and $b+t>0$, then the conditionally centered sum test of Theorem \ref{Centered_mean_test} is asymptotically powerful.

\item If $b<\frac{1}{2}$ and $t+\frac{1}{2}\le 0$ then all tests are asymptotically not powerful.

\item If $b<\frac{1}{2}$ and $t+\frac{1}{2}> 0$ then the conditionally centered max test of Theorem \ref{Centered_max_test} is asymptotically powerful.

\end{enumerate}
\end{thm}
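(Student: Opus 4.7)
The achievability parts (b) and (d) reduce to Theorems \ref{Centered_mean_test} and \ref{Centered_max_test}, applied to the model \eqref{Y:edge_star} after reparameterization: the substitution $Y_{ij}=2G_{ij}-1$ rewrites $\widetilde N(K_{1,2},G)$ as an affine function of $N(K_{1,2},G)$ and $\sum_i d_i$, so \eqref{Y:edge_star} is an instance of \eqref{eq:ERGM} with $H=K_{1,2}$ and a modified $(\beta_0,\theta)$. In (b), $b+t>0$ gives $sA=n^{b+t}\to\infty$, and any $L_n=n^{1+\delta}$ with $0<\delta<b+t$ meets the hypothesis of Theorem \ref{Centered_mean_test}. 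In (d), $t+\frac{1}{2}>0$ gives $A=n^t\gg\sqrt{\log n/n}$, so Theorem \ref{Centered_max_test} applies directly.

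The bulk of the theorem is the ``not powerful'' lower bound in (a) and (c). My plan is a Bayesian second moment argument. Let $\pi$ be the prior on $\Xi(s,A)$ which draws a uniformly random $S\subset[n]$ of size $s$ and sets $\bm{\beta}_S=A\cdot\mathbb{I}_S$ (recall $\beta_0=0$ in $\Theta_2$), let $\P^{\mathrm{mix}}=\E_\pi\P_{n,\theta,\bm{\beta}_S}$, and let $L=d\P^{\mathrm{mix}}/d\P_{n,\theta,\mathbf 0}$. A Paley--Zygmund inequality gives
\[
\inf_{T_n} R(T_n,\Xi(s,A),\bm{\beta})\ \ge\ 1-\|\P_{n,\theta,\mathbf 0}-\P^{\mathrm{mix}}\|_{TV}\ \ge\ \frac{c}{\E_0[L^2]},
\]
so it suffices to prove $\limsup_n\E_0[L^2]<\infty$. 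Note that this is strictly weaker than the $\E_0[L^2]\to 1$ needed for the ``powerless'' conclusion of Theorem \ref{Unique_dense}; that slackness is precisely what the non-unique phase of $\Theta_2$ permits, since the two modes of the limiting free energy contribute an irreducible constant. A direct expansion gives
\[
\E_0[L^2]\ =\ \E_{S,T}\!\left[\frac{Z_n(\bm{\beta}_S+\bm{\beta}_T,\theta)\,Z_n(\mathbf 0,\theta)}{Z_n(\bm{\beta}_S,\theta)\,Z_n(\bm{\beta}_T,\theta)}\right],
\]
with $S,T$ independent draws from $\pi$, reducing the problem to a random ratio of partition functions.

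To analyze this ratio I would use a Hubbard--Stratonovich linearization of $\widetilde N(K_{1,2},G)=\frac{1}{2}\sum_i k_i^2-\binom{n}{2}$, introducing auxiliary variables $\bm\phi\in\R^n$ so that conditionally on $\bm\phi$ the $Y_{ij}$ are independent with $\E[Y_{ij}\mid\bm\phi]=\tanh(\phi_i+\phi_j+(\beta_i+\beta_j)/2)$, and the marginal density of $\bm\phi$ is proportional to $\exp(-\frac{n-1}{2\theta}\|\bm\phi\|_2^2)\prod_{i<j}2\cosh(\phi_i+\phi_j+(\beta_i+\beta_j)/2)$. A saddle point analysis in $\Theta_2$ shows that constant-$\bm\phi$ profiles dominate and the effective free energy $F(\phi)=-\phi^2/\theta+\log(2\cosh(2\phi))$ has two symmetric maximizers $\pm\phi^*$ satisfying $\phi=\theta\tanh(2\phi)$ (the unstable root $\phi=0$ is ruled out since $\theta>1/2$). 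Evaluating each of the four partition functions by Laplace's method around $\pm\phi^*$, the symmetric contributions should combine, after $S,T$-averaging, into a bounded but non-vanishing limit.

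The main obstacle will be tracking the interaction between the bimodality and the random perturbation. A positive perturbation $\bm{\beta}_S$ tilts mass toward $+\phi^*$ and away from $-\phi^*$, and these asymmetric shifts enter the four partition functions at the delicate scales $sA$ (linear term) and $\sqrt{sn}\,A$ (Gaussian fluctuations around each mode), neither of which is assumed small throughout (a) and (c). Moreover, the $S,T$-average yields an $|S\cap T|$-dependent cross exponent, of schematic form $\sum_{i\in S\cap T}\mathrm{Var}_{n,\theta,\mathbf 0}(k_i\mid\bar\phi=\pm\phi^*)$, coming from second-order Laplace corrections. This is precisely where the paper's advertised correlation estimates on the degrees enter: uniform bounds on $\mathrm{Cov}_{n,\theta,\bm{\beta}_S}(k_i,k_j)$ let us show that on the high-probability event $\{|S\cap T|\lesssim s^2/n+1\}$ under $\pi\otimes\pi$, the cross exponent is $O(1)$, while the asymmetric mode shifts cancel to leading order within each mode. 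Combining these estimates yields $\limsup_n\E_0[L^2]<\infty$, completing the lower bound and hence the theorem.
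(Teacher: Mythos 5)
Your lower-bound plan takes a genuinely different route from the paper's, and the distinction is worth spelling out. The paper does \emph{not} work with the unconditional second moment $\E_0 L_\pi^2$. Instead it restricts the model to the event $U=\{\min_i k_i\ge (n-1)t/2\}$ (which captures exactly one of the two modes and has $\P_{\mathcal H_0}(U)\to 1/2$), computes the conditional second moment $\E_{\mathcal H_0,U}L_{\pi,U}^2(Y)\to 1$, and concludes ``not powerful'' from $\P_{\mathcal H_0}(U)\to 1/2$. You instead propose to show $\limsup_n\E_0 L_\pi^2<\infty$ directly, by an explicit two-mode Laplace expansion of $Z_n(\bm\beta_{S}+\bm\beta_T)Z_n(\mathbf 0)/(Z_n(\bm\beta_S)Z_n(\bm\beta_T))$. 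That framing is valid --- as you note, bounded $\E_0 L^2$ plus Paley--Zygmund gives $\liminf R>0$ --- and, once the two-mode decomposition $Z=Z^++Z^-$ is in hand, one has the clean estimate $Z(S+T)Z(0)/(Z(S)Z(T))\le 4\,Z^+(S+T)Z^+(0)/(Z^+(S)Z^+(T))$ (using $Z^\pm(0)$ equal by symmetry and $Z^-(\bm\beta)\le Z^+(\bm\beta)$ for $\bm\beta\ge 0$), so your plan collapses to controlling exactly the single-mode ratio that is the paper's conditional $\E_{\mathcal H_0,U}L_{\pi,U}^2$.

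The caveat I would press on is that your write-up understates how hard the remaining step is and where it lives. First, the object you need bounded is the \emph{per-mode} covariance $\mathrm{Cov}_{\bm\beta}(k_i,k_j\mid U)$, not the unconditional $\mathrm{Cov}_{\bm\beta}(k_i,k_j)$: in $\Theta_2$ the latter is of order $n$ (and $\mathrm{Var}(k_1)$ is of order $n^2$) because of the between-mode fluctuation, so the crude Lemma \ref{LBlemma} route diverges and your $\limsup\E_0 L_\pi^2<\infty$ claim is false if read with unconditional covariances. You write $\mathrm{Var}_{n,\theta,\mathbf 0}(k_i\mid\bar\phi=\pm\phi^*)$ in one place but then invoke ``uniform bounds on $\mathrm{Cov}_{n,\theta,\bm\beta_S}(k_i,k_j)$'' without the conditioning; that slip needs to be fixed. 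Second, these conditional covariance bounds are precisely the nontrivial content the paper supplies: there is no GHS-type inequality available on $U$, so Lemma \ref{lem:cov} is proved by a recursive scheme in the auxiliary variable, supported by Lemmas \ref{2.1}, \ref{2.3}, \ref{lem:cov2}, and \ref{lem:cov4} (including translating between the $\phi$-side event $\widetilde U$ and the observable event $U$ at exponential cost). Your Laplace-around-$\pm\phi^*$ strategy will have to reproduce exactly this machinery; the ``saddle point analysis'' sentence hides all of it. The achievability parts (b), (d) via reparameterization to \eqref{eq:ERGM} are fine and match the paper's observation that the $\{\pm1\}$ case goes through verbatim.
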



%
%
%

Note that at a qualitative level, the detection boundary in the regimes $\Theta_1$ and $\Theta_2$ are the same. The only difference is that below the detection boundary, in domain $\Theta_1$ Theorem \ref{Unique_dense} shows that all tests are powerless, and in domain $\Theta_2$ Theorem \ref{Nonunique_dense} shows that all tests are asymptotically not powerful. On the other hand, something fundamentally different happens in the critical domain $\Theta_3$, which corresponds to the choice $(\theta,\beta_0)=(1/2,0)$. In this case the optimal testing threshold is significantly lower than the other regimes, and does not depend on whether $b<1/2$ or $b>1/2$. Moreover, this improved performance does not follow from either  Theorem \ref{Centered_mean_test} or \ref{Centered_max_test}. 
In this case a test based on the unconditional sum of degrees attains the optimal detection boundary, for all values of $(s,A)$. This is explained in our final result below.

\begin{thm}\label{Critical Point} 
Let $Y$ be an observation from from \eqref{Y:edge_star}, and assume $(\theta,\beta_{0})=(\frac{1}{2},0)$. Consider the hypothesis testing problem  described in \eqref{Hypothesis_testing}, with $s=n^b$ and $A=n^{t}$ for some $b\in (0,1)$ and $t<0$.

\begin{enumerate}
\item[(a)]
 If  $b+t+\frac{1}{2}<0$, then all tests are asymptotically not powerful.
 
 \item[(b)]
 If $b+t+\frac{1}{2}>0$, then the total degree test $T_n(.)$ defined by
 \begin{align*}
 T_n(G)=&1\text{ if }\sum_{i=1}^nk_i>L_n,\\
 =&0\text{ otherwise}
 \end{align*}
 is asymptotically powerful for some sequence $L_n$ satisfying
$L_{n}\gg n^{3/2}$.
\end{enumerate}
\end{thm}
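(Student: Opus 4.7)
My strategy for both parts is to linearize the two-star interaction via a Hubbard--Stratonovich auxiliary variable representation. Writing
\begin{align*}
\exp\!\left(\tfrac{1}{4(n-1)}\sum_{i=1}^n k_i^2\right) = \E_U \exp\!\left(\tfrac{1}{\sqrt{2(n-1)}}\sum_{i=1}^n k_i U_i\right)
\end{align*}
with $U_1,\dots,U_n$ i.i.d.\ $N(0,1)$, both the null and the alternative become mixtures of inhomogeneous $\beta$-models over the Gaussian vector $U$: conditionally on $U$ the $Y_{ij}$ are independent $\pm 1$ with $\E[Y_{ij}\mid U,\bm\beta]=\tanh\bigl((U_i+U_j)/\sqrt{2(n-1)}+(\beta_i+\beta_j)/2\bigr)$, and the marginal log-density of $U$ is $\log\pi(U)=-\tfrac12\|U\|^2+\sum_{i<j}\log\cosh(\cdots)+\mathrm{const}$. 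All moment calculations for $T_n:=\sum_i k_i$ will be carried out by conditioning on $U$.

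For part (b), step one is to Taylor-expand $\log\cosh$ under the null and verify the \emph{critical cancellation}: using $\sum_{i<j}(U_i+U_j)^2=(n-2)\|U\|^2+T^2$ with $T:=\sum_iU_i$, the quadratic part of $\log\pi(U)$ simplifies to $-\tfrac{n}{4(n-1)}\|U\|^2+\tfrac{T^2}{4(n-1)}$, which vanishes in the direction $\mathbf{1}\in\R^n$ (along which $\|U\|^2=T^2/n$) while staying nondegenerate orthogonally. The surviving quartic correction along $\mathbf{1}$ is $-T^4/(6n^4)$, forcing $T/n$ to converge to a non-Gaussian limit with density $\propto e^{-x^4/6}$. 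Combined with $\E[T_n\mid U]\approx\sqrt{2(n-1)}\,T$ and $\operatorname{Var}(T_n\mid U)=O(n^2)$, this yields $T_n=O_p(n^{3/2})$ under $\P_{n,1/2,0}$, so the Type I error tends to $0$ for any $L_n\gg n^{3/2}$.

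For step two (Type II error), using the identity $\sum_{i<j}(X_i+X_j)=(n-1)\sum_iX_i$ the signal perturbs $\log\pi(U)$ by $\tfrac{A}{2\sqrt{2(n-1)}}\bigl((n-2)\sum_{i\in S}U_i+sT\bigr)+O(A^2)$; projected onto the critical direction this is an external field $h\asymp sA/\sqrt n$ acting on $T$. Balancing $h$ against the quartic confinement yields a mean shift $T^{*}\asymp(sA)^{1/3}n^{7/6}$ (nonlinear response) or $\asymp sA\cdot n^{3/2}$ (linear/susceptibility response), both giving $\E_1[T_n]\gtrsim\sqrt{n}\,T^{*}\gg n^{3/2}$ precisely when $sA\gg n^{-1/2}$, i.e.\ $b+t+\tfrac12>0$; the direct contribution $\E[T_n\mid U{=}0]\asymp nsA$ corroborates the same threshold in the dense range. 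A concentration step, bounding $\operatorname{Var}_1(T_n)$ via the same $U$-conditioning and the local curvature $\asymp(T^{*}/n^2)^2$ of the shifted quartic, completes the upper bound.

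For part (a), my plan is the second moment method with a uniform prior on signal sets $S$ of size $s$. The $\chi^2$-divergence $\E_0[L^2]$ reduces, after applying the HS representation to two independent copies $U,U'$, to a Laplace integral in $(T,T')$ with a cross term of order $(sA)^2|S\cap S'|/n$; averaging over $(S,S')$ and integrating against the product quartic background gives $\E_0[L^2]=1+o(1)$ whenever $sA\ll n^{-1/2}$, and Le Cam's inequality then yields non-trivial Type II error for every test. The main obstacle in both parts is executing the Laplace expansion sharply at criticality: the quadratic form has a zero mode in the direction $\mathbf{1}$, so the Gaussian response used throughout $\Theta_1\cup\Theta_2$ breaks down and one must instead propagate the quartic density of $T/n$, its response to small external fields, and tail controls of the $\log\cosh$ remainder through every moment computation.
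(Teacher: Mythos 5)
Your proposal is built on the same auxiliary--variable (Hubbard--Stratonovich) decoupling that the paper uses (their $\phi$ is, up to a harmless rescaling by $\sqrt{(n-1)/2}$, exactly your $U$), and you correctly identify the critical cancellation of the quadratic form along $\mathbf{1}$, the surviving quartic, and the resulting threshold $b+t+\tfrac12>0$ obtained by balancing the field $h\asymp sA/\sqrt n$ against the quartic. The conceptual picture is right, and your scaling $T\sim n$ (i.e.\ $\bar\phi\sim n^{-1/2}$) under $\mathcal H_0$ is the correct critical scaling. So this is not a different method in any essential sense; it is the same route re-expressed in the HS normalization.

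The gaps are in the execution, and they are nontrivial. For part (b), Type I error: the assertion that $T/n$ has a quartic limiting density is itself a Laplace-method theorem; one must split $U$ into the $\mathbf 1$-direction and its orthogonal complement, control the $\log\cosh$ remainder uniformly in both, and justify interchanging limits -- the paper outsources this to \cite[Theorem~1.1]{Sumit2020two} rather than proving it. For part (b), Type II error: the ``nonlinear response $T^*\asymp (sA)^{1/3}n^{7/6}$'' and the subsequent ``concentration step'' are stated but not carried out. To make them rigorous you need the analogue of the paper's change-of-measure identity $-\log f_{\bm\beta}(\phi)=-\log f_{\bm 0}(\phi)-\tfrac{nsA}{2}\tanh(\bar\phi)-R_n$ with a quantitative bound $\E|R_n|\lesssim\sqrt n\,sA$, which in turn leans on a moment bound of the form $\max_i\E|\phi_i-\bar\phi|^\ell\lesssim n^{-\ell/2}$ (Lemma~\ref{1.3}); none of this appears in your sketch, and ``local curvature $\asymp(T^*/n^2)^2$'' is not a substitute for it. For part (a), your ``Laplace integral in $(T,T')$'' route is genuinely different from the paper's, which instead uses the algebraic identity $\E_{\mathcal H_0}L_\pi^2=\binom{n}{s}^{-2}\sum_{S_1,S_2}\frac{Z_n(0)Z_n(\bm\beta_{S_1}+\bm\beta_{S_2})}{Z_n(\bm\beta_{S_1})Z_n(\bm\beta_{S_2})}$, an interpolation of $\log Z$ along a coordinate, and the GHS inequality to replace intermediate-parameter covariances by null ones; the surviving ingredients are exactly the null bounds $\mathrm{Var}(k_1)\lesssim n$ and $\mathrm{Cov}(k_1,k_2)\lesssim n$ at criticality, and the first of these again requires the auxiliary-variable moment control. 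Your ``cross term of order $(sA)^2|S\cap S'|/n$'' presupposes these covariance bounds but does not establish them, and it also does not say how you treat the nondegenerate directions orthogonal to $\mathbf 1$ in the four partition-function ratios. In short: the framework is the same as the paper's, the scalings are right, but the three hard quantitative lemmas (the quartic CLT for $\bar\phi$ under $\mathcal H_0$, the $O(n^{-\ell/2})$ moment bound on $\phi_i-\bar\phi$, and the null covariance bounds $\mathrm{Var}(k_1),\mathrm{Cov}(k_1,k_2)\lesssim n$) are not proved or even clearly identified, so the proposal as written does not yet constitute a proof of either part.
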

This demonstrates that the much weaker criterion $b+t+\frac{1}{2}>0$ is enough for detection at criticality, whereas away from criticality we need stronger conditions on $b,t$. Similar phenomenon of improved detection at criticality have been observed for Ising models \cite{mukherjee2018detection, mukherjee2018global, deb2020detecting}. Given that the two star ERGM can be viewed as an Ising model, it is thus not surprising that this continues to hold here. A summary of the detection boundary for the degree corrected two star ERGM is given in figure \ref{fig:two_star} below.

\begin{figure*}[h]
\centering
\hbox{\hspace{3em}\begin{minipage}[c]{1.0\textwidth}
\includegraphics[width=5in,height=3in]{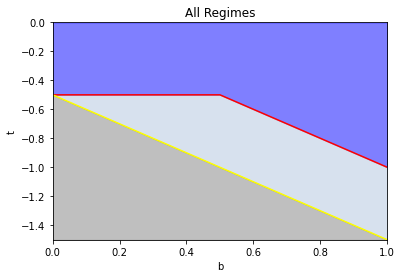}
\end{minipage}}
\caption{\footnotesize{In this figure, we plot $(b,t)$ along $X$ and $Y$ axis respectively, where $s=n^b$ is the size of the signal set, and $A=n^t$ is the magnitude of the signal. The range of $b$ is $(0,1)$, and the range of $t$ is $(-\infty,0)$. The deep blue portion of the plot represents the pairs $(b,t)$ where detection is possible in all regimes $\Theta_1\cup \Theta_2\cup\Theta_3$. The light blue portion of the plot represents the pairs $(b,t)$ where detection is possible  $\Theta_3$, but not for $\Theta_1\cup \Theta_2$. Finally, the grey portion of the plot represents the pairs $(b,t)$ where detection is  impossible in all regimes $\Theta_1\cup \Theta_2\cup\Theta_3$. Also note that in $\Theta_1\cup \Theta_2$ the optimal test depends on whether $b<1/2$ or $b>1/2$, whereas in $\Theta_3$ the optimal test does not depend on $b$. }}
\label{fig:two_star}
\end{figure*}


\subsection{Main Contributions and Future Scope}\label{sec:mc_fs}

In this paper we introduce the degree corrected ERGM, which combines traditional ERGMs with the ${\bm \beta}$-model and thereby allowing for not degree heterogeneity but also dependence between the edges. In this setting, we study the performance of two tests, based on conditionally centered sum of degrees, and conditionally centered maximum degree. The detection rate of these two tests match the performance of the corresponding tests based on the unconditionally centered sum of degree and unconditional maximum degree, respectively, in the independent case $(\theta=0$).  To explore the sharpness of these general tests, we subsequently study the degree corrected two star ERGM in detail. Here we show that in all parameter configurations other than $(\theta,\beta_0)=(1/2,0)$, the optimal detection boundary is attained by one of the conditionally centered tests. At the critical configuration $(\theta,\beta_0)=(1/2,0)$, we show that the optimal detection rate is significantly improved, and this optimal rate is attained by a test based on the unconditionally centered sum of degrees. 
\\

Throughout this paper we assume that the parameters $(\theta,\beta_0)$ are known. If $(\theta,\beta_0)$ is unknown, it may be possible to estimate $(\theta,\beta_0)$ if the signal $(s,A)$ is small, by ignoring the signals altogether and estimating the parameters via the null model MLE/pseudo-likelihood. However such a strategy is hopeless for all values of $(s,A)$, without the knowledge of $(\theta,\beta_0)$. Indeed, consider the following  extreme configuration when $ s=n, A=\infty$, in which case the graph $G$ equals $K_n$ with probability $1$ for any value of $\beta_0$. On the other hand, if $s=A=0$, but $\theta=\infty$, the observed graph is again $K_n$ with probability $1$ for any value of $\beta_0$. Thus having observed $G$, it is impossible to decide whether signal is present or absent, if we are not told the value of $\theta$. It remains to be seen to what extent a partial knowledge of $(\theta,\beta_0)$ can help in our testing problem.
\\

The analysis of the conditionally centered sum and maximum of degrees for general (degree corrected) ERGMs is achieved using concentration results based on the method of exchangeable pairs (\cite{chatterjee2006stein}). Focusing on the degree corrected two star ERGM, to verify the improved detection rate at criticality,  we introduce a continuous auxiliary variable $\phi\in \R^n$ (similar to \cite{mukherjee2018global}), and show that a suitable function of $\phi$ is stochastically much larger under the alternative than under the null hypothesis. Using this, we show that the unconditional sum of degrees is stochastically much larger under the alternative, which gives the improved detection at criticality. The lower bound argument uses the second moment method, which reduces to bounding the correlation between the degrees under the alternative. In the regimes $\Theta_1$ and $\Theta_3$, using GHS inequality (\cite{lebowitz1974ghs}) we can bound the correlations between the edges under the alternative by the correlation under the null, for which bounds are available from \cite{Sumit2020two}, using exchangeability of the null model. In the regime $\Theta_2$ we need to do a conditional second moment argument restricted to the set where the degrees are large. In the absence of a conditional GHS inequality, we have to directly bound the conditional correlations between the edges under the alternative.  To do this, we make crucial use of the auxiliary variable $\phi$ and set up a recursive equation involving the correlations between degrees of the graph. This recursion leads to a uniform bound on the correlations which is also a tight upper bound (in terms of rate), and suffices for the second moment argument. It is of interest to see if one can set up similar recursive equations to bound correlation between edges in general (degree corrected) ERGMs, in presence/absence of auxiliary variables.

In this paper we focus on the optimal detection rates while studying the detection boundary. A natural follow up question is to study existence of sharp constants (depending on $\theta,\beta_0$) which controls the detection boundary for the degree corrected two star ERGMs. Similar to \cite{mukherjee2018detection}, we expect a sharp phase transition (i.e. existence of a constant which determines the optimal detection boundary) in the regime $b<1/2$, when $(\theta,\beta_0)\ne (1/2,0)$. We believe that to attain optimal detection constants, one needs to study a conditionally centered version of the Higher Criticism Test in the regime $1/4<b<1/2$, wheres the maximum test should suffice in the regime $b<1/2$. Going beyond the two star case, it is of interest to find optimal detection rates, both away from, and at, \enquote{criticality}, for general degree corrected ERGMs. A major challenge in carrying out the lower bound argument beyond the two star case is the absence of tight correlation bounds for general ERGMs, both under the null and alternative hypotheses.

\subsection{Outline}

The outline of the paper is as follows. In section \ref{proofs} we verify results Theorems \ref{Centered_mean_test} and \ref{Centered_max_test}. In section \ref{sec:two_star} we verify Theorems \ref{Unique_dense} and \ref{Nonunique_dense}. The proofs of the results of section \ref{sec:two_star} uses some supporting lemmas, the proofs of which is deferred to section \ref{sec:appen}.

\section{Proof of Theorems \ref{Centered_mean_test} and \ref{Centered_max_test}}\label{proofs}
We will need the following concentration bound for conditionally centered linear statistics for proving the results of this section. The proof of this lemma is similar to \cite[Lemma 2.1]{deb2020fluctuations} and \cite[Lemma 1]{mukherjee2018global}. 

\begin{lem}\label{Concen}
Let $G$ be a random graph from the model \eqref{eq:ERGM}. Then for any arbitrary collection of positive numbers $\{c_{e}\}_{e\in \mathcal{E}}$ and any $x>0$ we have
\begin{align}\label{concentration}
\P_{n,\theta,{\bm \beta}}\Big(\Big|\sum\limits_{e\in\mathcal{E}}c_{e}\Big(G_{e}-\E_{n,\theta,{\bm \beta}}\big(G_{e}\big|G_{f}:f\neq e\big)\Big)\Big|>x\Big)\leq 2\exp\left\{-\frac{x^2}{\lambda\sum_{e\in\mathcal{E}}c_{e}^{2}}\right\}
\end{align}
where $\lambda=\lambda(\theta,H)$ is a constant depending only on $\theta>0$ and the subgraph $H$.
\end{lem}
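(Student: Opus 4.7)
The plan is to prove this via Chatterjee's method of exchangeable pairs (\cite{chatterjee2006stein}), closely following the template of \cite[Lemma 1]{mukherjee2018global} and \cite[Lemma 2.1]{deb2020fluctuations}. First I would construct an exchangeable pair $(G, G')$ via one step of Glauber dynamics: pick $e \in \mathcal{E}$ uniformly and independently of $G$, and replace $G_e$ by an independent draw $G'_e$ from the conditional law of $G_e$ given $\{G_f: f \ne e\}$, leaving the remaining edges unchanged. Since Glauber dynamics is reversible with respect to $\P_{n,\theta,\bm{\beta}}$, this gives an exchangeable pair.

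Set $W(G) := \sum_{e} c_e(G_e - p_e(G))$ with $p_e(G) := \psi(\theta t_e(H, G) + \beta_i + \beta_j)$. Because $p_e$ depends only on $G_{-e}$, we have $p_e(G) = p_e(G')$, so
\begin{align*}
W(G) - W(G') \;=\; c_e(G_e - G'_e) \;-\; \sum_{f \ne e} c_f\bigl[p_f(G') - p_f(G)\bigr].
\end{align*}
Taking expectation given $G$ and using $\E[c_e(G_e - G'_e) \mid G_{-e}, e] = c_e(G_e - p_e(G))$, averaging over the uniform choice of $e$ gives
\begin{align*}
\E\bigl[W(G) - W(G') \,\big|\, G\bigr] \;=\; \tfrac{1}{|\mathcal{E}|}\,W(G) + R(G),
\end{align*}
where $R(G)$ collects the averaged perturbation of the conditional means $p_f$. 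Together with an $L^2$-bound of the form $\E[(W - W')^2 \mid G] \le B / |\mathcal{E}|$ and $|R(G)|$ subdominant, the standard subgaussian concentration lemma for exchangeable pairs (e.g.\ Theorem 3.3 of \cite{chatterjee2006stein}) will then deliver \eqref{concentration} with $\lambda = \lambda(\theta, H)$.

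The heart of the argument is the variance estimate for the squared jump and for $R(G)$. By global Lipschitzness of $\psi$ with constant $\le 1/4$, $|p_f(G) - p_f(G')| \le (\theta/4)\, |t_f(H, G) - t_f(H, G')|$, and $t_f(H, G) - t_f(H, G')$ is nonzero only when $f$ lies in some copy of $H$ containing $e$, with the pointwise bound $|t_f(H,G) - t_f(H,G')| \lesssim N_{e,f}(H, G \vee G')/n^{\zeta-2}$. The key combinatorial identity
\begin{align*}
\sum_{f \ne e} N_{e,f}(H, G) \;=\; (|E(H)| - 1)\, N_e(H, G) \;=\; O(n^{\zeta - 2})
\end{align*}
combined with two applications of Cauchy--Schwarz (first over $f$ for fixed $e$, then summing over $e$) converts $\sum_{f\ne e} c_f\,[p_f(G') - p_f(G)]$ into a term controlled by a constant times $\bigl(\sum_e c_e^2\bigr)^{1/2}$, yielding the required bound on both $\E[(W-W')^2 \mid G]$ and on $R(G)$.

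The main obstacle will be keeping the combinatorial bookkeeping clean enough so that the final constant $\lambda$ depends only on $(\theta, H)$ and is \emph{uniform in $\bm{\beta}$}: uniformity is the reason I insist on using $\|\psi'\|_\infty \le 1/4$ and the deterministic bound $N_e(H, G) \le C_H\, n^{\zeta - 2}$ rather than the expectation of $t_e$, since the latter is $\bm{\beta}$-dependent under the alternative. A secondary subtlety is verifying that $R(G)$ is genuinely negligible relative to $|\mathcal{E}|^{-1} W(G)$ at the level of the exponential moment generating function, so that it may be absorbed into the final subgaussian rate without inflating it.
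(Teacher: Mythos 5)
Your proposal runs into a genuine obstruction at the very first step: the choice of base statistic. You define $W(G) := \sum_e c_e(G_e - p_e(G))$ (the conditionally centered sum) and then set up the exchangeable pair machinery around the antisymmetric function $W(G)-W(G')$. This gives
\begin{align*}
\E\bigl[W(G)-W(G')\,\big|\,G\bigr] \;=\; \tfrac{1}{N}\,W(G) \;+\; R(G), \qquad N:=\tbinom{n}{2},
\end{align*}
and Chatterjee's Theorem~1.5 (or any variant of the same flavor) produces subgaussian concentration only for the \emph{conditional expectation} $\E[F(X,X')\mid X]$, i.e.\ for $\tfrac{1}{N}W(G)+R(G)$, not for $W(G)$ itself. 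You claim $R$ is ``subdominant'' and can be absorbed, but quantitatively it is the opposite. By the Lipschitz bound $\|\psi'\|_\infty \le 1/4$ together with $\sum_{f\ne e} N_{e,f}(H,K_n) \lesssim n^{\zeta-2}$, the worst-case size of the remainder is $|R(G)| \lesssim N^{-1}\sum_f |c_f|$, while the main term $\tfrac{1}{N}W(G)$ is typically of order $N^{-1}\bigl(\sum_e c_e^2\bigr)^{1/2}$. Already in the benchmark case $c_e\equiv 1$ this gives $|R| = O(1)$ versus $\tfrac{1}{N}W = O(1/n)$: the remainder \emph{dominates}, and there is no way to absorb it into the exponential rate without an independent concentration argument for $R$ (which is exactly as hard as the original problem).

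The paper sidesteps this entirely by running the method on the \emph{uncentered} linear statistic $J(G):=\sum_e c_e G_e$. With $F(G,G')=J(G)-J(G')=c_I(G_I-G'_I)$, the conditional expectation $\E[F\mid G]$ equals $\tfrac{1}{N}W(G)$ \emph{exactly}, with no extra term: the conditional centering is produced by the exchangeable-pair construction rather than being imposed by hand. One then only needs to verify that $\bigl|\E[(h(G)-h(G'))(J(G)-J(G'))\mid G]\bigr|\lesssim n^{-4}\sum_e c_e^2$ (where $h:=\E[F\mid G]$), which your combinatorial bookkeeping — the Lipschitz bound on $\psi$, the deterministic estimate $N_{e,f}(H,K_n)\lesssim n^{\zeta-3}$ or $n^{\zeta-4}$ depending on shared vertices, and an $\ell_\infty$-operator-norm bound — is well suited to deliver. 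Your instincts about what needs to be uniform in $\bm{\beta}$ and how the $N_{e,f}$ combinatorics should close are sound; the fix is simply to apply the method to $J$ rather than to $W$, which eliminates $R(G)$ from the argument rather than trying to control it.
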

\begin{proof}
Produce an exchangeable pair $(G,G')$ in the following way:

Pick a random vertex pair $I$ of the uniformly from the set $\mathcal{E}$ with cardinality $N={n\choose 2}$. If $I=e$, replace the random variable $G_e$ by $G_e'$ a pick from the conditional distribution given $\{G_f, f\ne e\}$. Let this new graph be denoted by $G'$. It is easy to verify that $(G,G')$ is indeed an exchangeable pair.
Setting $J(G):=\sum_{e\in\mathcal{E}}c_{e}G_{e}$, note that
\begin{align*}
h(G):=\E_{n,\theta,{\bm \beta}}\left(J(G)-J(G')\big|G\right)
=&\frac{1}{N}\sum_{e\in \mathcal{E}} c_e\left(G_e-\E_{n,\theta,{\bm \beta}}\big(G_{e}\big|G_{f}:f\neq e\big)\right)\\
=&\frac{1}{N}J(G)-\frac{1}{N}\sum_{e\in \mathcal{E}} c_e \frac{\exp\Big\{\frac{\theta}{n^{\zeta-2}} N_{e}(H,G)+\beta_e\}}{1+\exp\Big\{\frac{\theta}{n^{\zeta-2}} N_{e}(H,G)+\beta_e\Big\}},
\end{align*}
where $N_e(H,G)$ is the number of copies of $H$ in the graph $G$, which contains the edge $e$. Using the fact that the derivative of the function $\psi(x)=\frac{e^x}{1+e^x}$ is bounded by $\frac{1}{4}$, this gives
\begin{align*}
|h(G)-h(G')|
\le &\frac{|c_{I}|}{N}+\frac{|\theta|}{4N n^{\zeta-2}}\sum_{e\in \mathcal{E}} |c_e| |N_e(H,G)-N_e(H,G')|\\
\le &\frac{|c_{I}|}{N}+\frac{|\theta|}{4N n^{\zeta-2}} \sum_{e\in \mathcal{E}}|c_e| N_{e,I}(H,K_n),
\end{align*}
where $N_{e,f}(H,K_n)$ is the number of copies of $H$ in the complete graph $K_n$ passing through both the edges $e$ and $f$. Consequently, we have
\begin{align*}
\begin{split}
&\Big|\E_{n,\theta,{\bm \beta}}\Big((h(G)-h(G'))(J(G)-J(G'))\Big|G\Big)\Big|\\
&\le \frac{1}{N} \sum_{f\in \mathcal{E}} |c_f|\left[ \frac{|c_f|}{N}+\frac{|\theta|}{4N n^{\zeta-2}} \sum_{e\in \mathcal{E}}|c_e| N_{e,f}(H,K_n)\right]\\
=&\frac{1}{N^2}\sum_{f\in \mathcal{E}} c_f^2+\frac{|\theta|}{4N^2  n^{\zeta-2}} \sum_{e,f\in \mathcal{E}}N_{e,f}(H,K_n)|c_e||c_f|\\
=&\frac{1}{N^2}\sum_{e,f\in \mathcal{E}} B_N(e,f) |c_e| |c_f|,
\end{split}
\end{align*}
where $B_N$ is a $N\times N$ symmetric matrix defined by:
\begin{align*}
B_N(e,f):=\left\{
\begin{array}{rcl}
1 &  \text{if $e=f$}\\
\frac{|\theta|}{4n^{\zeta-2}} N_{e,f}(H,K_n) &  \text{if $e\ne f$}.
\end{array} \right.
\end{align*}
Now for any $e\ne f$ we have
\begin{align*}
N_{e,f}(H,K_n)\lesssim &n^{\zeta-4}\text{ if $e$ and $f$ have no vertex in common},\\
\lesssim &n^{\zeta-3}\text{ if $e$ and $f$ have one vertex in common}.
\end{align*}
This gives
$$\max_{e\in \mathcal{E}}\sum_{f\in \mathcal{E}}B_N(e,f)\lesssim 1+n^2 \frac{1}{n^{\zeta-2}} n^{\zeta-4} +n \frac{1}{n^{\zeta-2}} n^{\zeta-3}\lesssim 1,$$
which in turn implies that the operator norm of the matrix $B_N$ is $O(1)$, and consequently, 
$$\Big|\E_{n,\theta,{\bm \beta}}\Big((h(G)-h(G'))(J(G)-J(G'))\Big|G\Big)\Big|\lesssim \frac{1}{N^2} \sum_{e\in \mathcal{E}} c_e^2\lesssim \frac{1}{n^4} \sum_{e\in \mathcal{E}}c_e^2.$$
Then by Stein's Method for concentration inequalities as in \cite[Theorem 1.5]{chatterjee2006stein}, the conclusion of the lemma follows. 

\end{proof}
\subsection{\textbf{Proof of Theorem \ref{Centered_mean_test}}}

To begin, using Lemma \ref{Concen} with $c_{e}=1$ for all $e\in\mathcal{E}$ gives the existence of a constant $\lambda$ (depending only on $\theta,H$) such that
\begin{align}\label{type1ee}
\P_{\mathcal{H}_{0}}\Big(\Big|\sum\limits_{e\in\mathcal{E}}\big(G_{e}-\E_{n,\bm{\beta}_{0},\theta}\big(G_{e}\big|G_{f}:f\neq e\big)\big)\Big|>L_{n}\Big)\leq2\exp{\left\{-\frac{L_{n}^{2}}{\lambda {n\choose 2}}\right\}}\longrightarrow0,
\end{align}
where the last limit uses $L_n\gg n$. This shows that type I error converges to $0$. 

It thus remains to show that type II error converges to $0$. To this effect, note that $t_e(H,G)\le t_e(H,K_n)$ which is bounded, and so therefore there exist a constant $\delta>0$ such that 
\begin{align}
\label{eq:delta}
\notag\E_{n,\theta,{\bm \beta}}\big(G_{e}\big|G_{f}:f\neq e\big)-\E_{n,\bm{\beta}_{0},\theta}\big(G_{e}\big|G_{f}:f\neq e\big)
=&\psi(\theta t_{e}(H,G)+\beta_i+\beta_j) -\psi(\theta t_e(H,G)+2\beta_0)\\
\geq &\delta\min\{\beta_{i}+\beta_{j}-2\beta_{0},1\}.
\end{align}
Adding this gives
$$\sum_{e\in\mathcal{E}}\Big(\E_{n,\theta,{\bm \beta}}\big(G_{e}\big|G_{f}:f\neq e\big)-\E_{n,\bm{\beta}_{0},\theta}\big(G_{e}\big|G_{f}:f\neq e\big)\Big)\geq \delta nsA.$$
Since $L_n\ll nsA$, for all $n$ large we have
\begin{align*}
\begin{split}
&\P_{n,\theta,{\bm \beta}}\Big(\sum\limits_{e\in\mathcal{E}}\big(G_{e}-\E_{n,\bm{\beta}_{0},\theta}(G_{e}|G_{f}:f\neq e)\big)\leq L_{n}\Big)    \\
\le& \P_{n,\theta,{\bm \beta}}\Big(\Big|\sum\limits_{e\in\mathcal{E}}\big(G_{e}-\E_{n,\theta,{\bm \beta}}(G_{e}|G_{f}:f\neq e)\big)\Big|\geq L_{n}\Big) 
\leq 2\exp\left\{-\frac{L_{n}^{2}}{\lambda n^{2}}\right\}.
\end{split}
\end{align*}
where we again invoke Lemma \ref{Concen} in the last line above. This gives
$$\sup_{\beta\in \Xi(s,A) } \P_{n,\beta,\theta}\Big(\Big|\sum\limits_{e\in\mathcal{E}}\big(G_{e}-\E_{n,\bm{\beta}_{0},\theta}\big(G_{e}\big|G_{f}:f\neq e\big)\big)\Big|\le L_{n}\Big)\le  2\exp\left\{-\frac{L_{n}^{2}}{\lambda {n\choose 2}}\right\},$$
which converges to $0$ as $L_n\gg n$. This completes the proof of the theorem.

\subsection{\textbf{Proof of Theorem \ref{Centered_max_test}}}
As in the previous theorem, it suffices to show that both type I and type II errors converge to $0$. For estimating the type I error, 
using a union bound gives
\begin{align}
\begin{split}
&\P_{\mathcal{H}_{0}}\Big(\max\limits_{1\le i\le n}\Big|\sum_{e\ni i}(G_e-\E_{n,\bm{\beta}_{0},\theta}\big(G_{e}\big|G_{f}:f\neq e\big)\Big{|}>C\sqrt{n\log n}\Big) \\
&\leq\sum\limits_{i=1}^{n}\P_{\mathcal{H}_{0}}\Big(\Big{|}\sum_{e\ni i}(G_e-\E_{n,\bm{\beta}_{0},\theta}\big(G_{e}\big|G_{f}:f\neq e\big)\Big{|}>C\sqrt{n\log n}\Big) 
\le  n\exp{\left\{-\frac{C^{2}n\log n}{\lambda  (n-1)}\right\}},
\end{split}    
\end{align}
where the last inequality uses Lemma \ref{Concen} with $c_e=1$ if $e\ni i$, and $0$ otherwise. For the choice $C>\sqrt{\lambda}$ the RHS above converges to $0$, and so Type I error converges to 0. 
\\

For estimating the Type II error, fix vertex $i$ such that $\beta_i\ge A$. Then using \eqref{eq:delta} gives
$$\sum_{e\ni i}\Big(\E_{n,\theta,{\bm \beta}}\big(G_{e}\big|G_{f}:f\neq e\big)-\E_{n,\bm{\beta}_{0},\theta}\big(G_{e}\big|G_{f}:f\neq e\big)\Big)\geq \delta n\min\{A,1\}.$$
Since $A\ge \kappa \sqrt{\frac{\log n}{n}}$, for all $n$ large we have
$$\delta n\min\{A,1\}\ge \delta \kappa  \sqrt{n\log n} \ge  2C\sqrt{\log n}$$
for the choice $\kappa=\frac{2C}{\delta}$. This gives
\begin{align*}
\begin{split}
&\P_{n,\theta,{\bm \beta}}\Big(\sum_{e\ni i}(G_e-\E_{n,\theta,\beta_0{\bf 1}}\big(G_{e}\big|G_{f}:f\neq e\big)\le C\sqrt{n\log n}\Big)\\
\le &\P_{n,\theta, \bm{\beta}}\Big(\Big{|}\sum_{e\ni i}(G_e-\E_{n,\theta,{\bm \beta}}\big(G_{e}\big|G_{f}:f\neq e\big)\Big{|}\ge C\sqrt{n\log n}\Big)\le 2\exp{\left\{-\frac{C^{2}n\log n}{\lambda  (n-1)}\right\}},
\end{split}    
\end{align*}
where the last inequality again uses Lemma \ref{Concen}.  Thus we have shown $$\sup_{\beta\in \Xi(s,A) } \P_{n,\beta,\theta}\Big(\Big|\sum\limits_{e\ni i}\big(G_{e}-\E_{n,\bm{\beta}_{0},\theta}\big(G_{e}\big|G_{f}:f\neq e\big)\big)\Big|\le C\sqrt{n\log n}\Big)\le  2\exp{\left\{-\frac{C^{2}n\log n}{\lambda  (n-1)}\right\}},$$
which converges to $0$ as before for the choice $C>\sqrt{\lambda}$.

%
%

\subsection{Proof of parts (b) and (d) of Theorem \ref{Unique_dense} and Theorem \ref{Nonunique_dense}}

Part (b) follows by a direct application of Theorem \ref{Centered_mean_test}, on noting that $sA=n^{b+t}\to \infty$ if $b+t>0$. Similarly, part (d) follows by a direct application of Theorem \ref{Centered_max_test}, on noting that $A=n^{t}\gg \sqrt{\frac{\log n}{n}}$ if $t>-\frac{1}{2}$. Both Theorem \ref{Centered_mean_test} and Theorem \ref{Centered_max_test} were proved for $\{0,1\}$ valued random variables, but essentially the same proof goes through for $\{-1,1\}$ valued random variables.

\subsection{Proof of Theorem \ref{Critical Point} part (b)}

To prove Theorem \ref{Critical Point} part (b) (as well as parts (a) and (c) of Theorem \ref{Nonunique_dense} later), we express the two star model as a mixture of $\beta$ models by introducing auxiliary variables, as done in \cite{park2004solution,Sumit2020two}. Suppose $Y$ be a random graph from degree corrected two-star model \eqref{Y:edge_star}. Conditional on $Y$, let$(\phi_1,\cdots,\phi_n)$ be mutually independent components, with
\begin{align}\label{auxil}
\phi_i\sim N\Big(\frac{k_{i}}{n-1},\frac{1}{\theta(n-1)}\Big).    
\end{align}
The joint distribution of $(\phi,Y)$ is computed in the following Proposition. The proof of this is deferred to the appendix (section \ref{sec:appen}). 
\begin{ppn}\label{thm:bayes}
\begin{enumerate}
\item[(a)]
Given $\phi$, the random variables $(Y)_{1\le i<j\le n}$ are mutually independent, with 
\begin{align*}
\P_{n,\theta,{\bm \beta}}(Y_{ij}=1|\phi)=\frac{e^{\theta(\phi_{i}+\phi_{j})+\frac{1}{2}(\beta_{i}+\beta_{j})}}{e^{\theta(\phi_{i}+\phi_{j})+\frac{1}{2}(\beta_{i}+\beta_{j})}+e^{-\theta(\phi_{i}+\phi_{j})-\frac{1}{2}(\beta_{i}+\beta_{j})}}.
\end{align*}
\item[(b)]
The marginal density of $\phi$ (w.r.t. Lebesgue measure) is proportional to
\begin{align}\label{definef}
f_{n,\theta,{\bm \beta}}(\phi):=&\exp\left\{-\sum\limits_{i<j} p_{ij}(\phi_i,\phi_j)\right\},
\end{align}
where $ p_{ij}(x,y)$ equals
\begin{align}
\begin{split}\label{definep}
&\frac{\theta}{2}(x^2+y^2)-\log\cosh{[\theta(x+y)+\frac{1}{2}(\beta_{i}+\beta_{j})]}\\
 =&\frac{\theta}{4}(x-y)^2+q\Big(\frac{x+y}{2}\Big)+\log\cosh\Big(\theta(x+y)\Big)-\log\cosh\Big(\theta(x+y)+\frac{1}{2}(\beta_{i}+\beta_{j})\Big),
 \end{split}
 \end{align}
 with
\begin{align}\label{eq:q}
q(x):=\theta x^2-\log\cosh(2\theta x).
\end{align}
\end{enumerate}
\end{ppn}

We now state the following lemma, which is the analogue of \cite[Lemma 4.1]{Sumit2020two}. The proof of these lemmas are deferred to the appendix (\ref{sec:appen}).

%
%
%
%


\begin{lem}\label{1.3}
Suppose $\theta=1/2$, and ${\bm \beta}\in [0, n^{-1/2}]^n$. Then for  any positive integer $\ell\in \N$, there exist a constant $C$ depending only on $\ell, \theta$ such that $$\max_{1\le i\le n}\E_{n,\theta,{\bm \beta}}|\phi_{i}-\Bar{\phi}|^{l}\leq Cn^{-l/2}.$$
\end{lem}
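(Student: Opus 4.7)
The first step is to rewrite the density \eqref{definef} in a form where conditional log-concavity on the zero-mean subspace is manifest. Using the first equality in \eqref{definep} and $\sum_{i<j}(\phi_i^2+\phi_j^2)=(n-1)\sum_i\phi_i^2$,
$$f_{n,\theta,{\bm \beta}}(\phi)\propto\exp\left\{-\frac{\theta(n-1)}{2}\sum_{i=1}^n\phi_i^2+\sum_{i<j}\log\cosh\Bigl(\theta(\phi_i+\phi_j)+\tfrac{\beta_i+\beta_j}{2}\Bigr)\right\}.$$
Setting $s_{ij}:=\operatorname{sech}\bigl(\theta(\phi_i+\phi_j)+(\beta_i+\beta_j)/2\bigr)\in(0,1]$, a direct computation yields, for every $v\in\mathbb{R}^n$,
$$v^\top\mathrm{Hess}(-\log f_{n,\theta,{\bm \beta}})\,v=\theta(n-1)\|v\|^2-\theta^2\sum_{i<j}s_{ij}^2(v_i+v_j)^2.$$
On $\mathbf{1}^\perp:=\{v:\sum_i v_i=0\}$ we have $\sum_{i<j}(v_i+v_j)^2=(n-2)\|v\|^2$, so the Hessian restricted to $\mathbf{1}^\perp$ is bounded below by $[\theta(n-1)-\theta^2(n-2)]I$; at $\theta=1/2$ this equals $(n/4)I$, uniformly in $\phi$ and ${\bm \beta}$.

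Next, decompose $\phi=\bar\phi\,\mathbf{1}+\psi$ with $\psi\in\mathbf{1}^\perp$. By the Hessian bound just obtained, the conditional law of $\psi$ given $\bar\phi$ is strongly log-concave on the $(n-1)$-dimensional subspace $\mathbf{1}^\perp$ with convexity constant $n/4$. Bakry-\'Emery's criterion then supplies a log-Sobolev inequality with constant $O(1/n)$, and Herbst's argument yields sub-Gaussian concentration at scale $n^{-1/2}$: writing $m_i(u):=\mathbb{E}_{n,\theta,{\bm \beta}}[\psi_i\mid\bar\phi=u]$,
$$\mathbb{E}_{n,\theta,{\bm \beta}}\bigl[|\psi_i-m_i(\bar\phi)|^\ell\bigm|\bar\phi\bigr]\le C_\ell\,n^{-\ell/2}\qquad\text{for every }\ell\in\mathbb{N}.$$

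It then remains to bound $|m_i(u)|$ by $Cn^{-1/2}$ uniformly in $i,u$. At ${\bm \beta}={\bm 0}$, exchangeability of $(\psi_1,\ldots,\psi_n)$ under the conditional law, combined with $\sum_j\psi_j=0$, forces $m_i\equiv 0$. For ${\bm \beta}\in[0,n^{-1/2}]^n$, the conditional density of $\psi$ given $\bar\phi=u$ is obtained from the ${\bm \beta}={\bm 0}$ one by the tilt
$$\sum_{i<j}\Bigl[\log\cosh\bigl(\theta(\phi_i+\phi_j)+\tfrac{\beta_i+\beta_j}{2}\bigr)-\log\cosh\bigl(\theta(\phi_i+\phi_j)\bigr)\Bigr].$$
Expanding to first order in ${\bm \beta}$ and using the identity $\sum_{i<j}(\beta_i+\beta_j)(\psi_i+\psi_j)=(n-2)\sum_i\beta_i\psi_i$ valid on $\mathbf{1}^\perp$, the leading effect on the conditional log-density is a linear functional $\ell^\top\psi$ with $\max_i|\ell_i|=O(\sqrt n)$. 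Strong log-concavity at level $n/4$ translates such a tilt into a mean shift of order $O(\sqrt n/n)=O(n^{-1/2})$; the $O({\bm \beta}^2)$ remainder contributes $O(n)$ to the log-density and is absorbed by the same log-Sobolev concentration applied to a $1$-Lipschitz perturbation.

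Combining the two estimates through the $L^\ell$-triangle inequality,
$$\|\psi_i\|_\ell\le\|\psi_i-m_i(\bar\phi)\|_\ell+\|m_i(\bar\phi)\|_\infty\le C_\ell\,n^{-1/2},$$
which is the claimed bound. The main obstacle is the last step: although each $\beta_i$ is small, the cumulative tilt of $-\log f$ is of size $O(n^{3/2})$, so a pointwise change-of-measure comparison to ${\bm \beta}={\bm 0}$ is not available. The argument is rescued by two features special to the critical point $\theta=1/2$: (i) the ${\bm \beta}$-perturbation, projected onto $\mathbf{1}^\perp$ at fixed $\bar\phi$, is essentially linear in $\psi$ with slope $O(\sqrt n)$; and (ii) the strong log-concavity $\succeq(n/4)I$ on $\mathbf{1}^\perp$ persists under the ${\bm \beta}$-perturbation because it only uses the uniform bound $s_{ij}\le 1$.
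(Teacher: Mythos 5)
Your Hessian computation is correct and gives a clean conditional strong-log-concavity statement: at $\theta=1/2$, the restriction of $\operatorname{Hess}(-\log f_{n,\theta,{\bm\beta}})$ to $\mathbf{1}^\perp$ is $\succeq (n/4)I$ uniformly in $\phi$ and ${\bm\beta}$ (this uses only $\operatorname{sech}^2\le 1$), so Bakry--\'Emery and Herbst do give $\E\big[|\psi_i-m_i(\bar\phi)|^\ell\,\big|\,\bar\phi\big]\lesssim n^{-\ell/2}$. That part of the argument is sound, and it is a genuinely different route from the paper, which instead works on the high-probability event $\{\sum_{i\ge 2}(\phi_i-\bar\phi_1)^2\le M,\ |\bar\phi_1|\le M n^{-1/4}\}$ and directly sandwiches the conditional density of $\phi_1$ given $\phi_2,\dots,\phi_n$ between Gaussian-type densities with variance $O(1/n)$.

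The gap is in the mean-shift step. You claim that an additive tilt $\ell^\top\psi$ with $\max_i|\ell_i|=O(\sqrt n)$ (so $\|\ell\|=O(n)$) shifts each coordinate of the conditional mean by $O(\sqrt n/\kappa)=O(n^{-1/2})$. Strong log-concavity alone does not give this. Writing $m_i({\bm\beta})-m_i({\bm 0})=\int_0^1 e_i^\top\mathrm{Cov}_t\,\ell\,dt$, the only general bound from $\mathrm{Cov}_t\preceq \kappa^{-1}P$ (Brascamp--Lieb) is $|e_i^\top\mathrm{Cov}_t\,\ell|\le \sqrt{e_i^\top\mathrm{Cov}_t e_i}\cdot\sqrt{\ell^\top\mathrm{Cov}_t\ell}\le \|\ell\|/\kappa=O(1)$, which is too weak by a factor $\sqrt n$. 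Your intuition corresponds to the Gaussian case, where $\mathrm{Cov}\approx\kappa^{-1}(I-n^{-1}\mathbf 1\mathbf 1^\top)$ and hence $|(\mathrm{Cov}\,\ell)_i|\lesssim\kappa^{-1}\max_j|\ell_j|=O(n^{-1/2})$; but for a general strongly log-concave measure the off-diagonal covariance entries are only controlled in operator norm, and $\sum_{j\ne i}|\mathrm{Cov}_t(\psi_i,\psi_j)|$ could be as large as $O(n^{-1/2})$, which ruins the coordinate-wise estimate. To close this you would need something like $|\mathrm{Cov}_t(\psi_i,\psi_j)|\lesssim n^{-2}$ for $i\ne j$ under the interpolated tilts, and that requires a separate structural argument (it is not a consequence of $\kappa$-strong convexity). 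A second, smaller issue: you say the $O({\bm\beta}^2)$ remainder ``contributes $O(n)$ to the log-density and is absorbed,'' but an $O(n)$ perturbation of the log-density is exponentially large and is not absorbed by a log-Sobolev inequality; the perturbation must first be shown to be (approximately) constant in $\psi$, or bounded by a small multiple of the quadratic form, before it can be discarded. Both issues sit exactly where the paper's proof does its work, by sandwiching the conditional density pointwise on a good event rather than invoking a soft concentration statement.
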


\begin{proof}[Proof of Theorem \ref{Critical Point} part (b)]

We begin by claiming the existence of a sequence of positive reals $K_n\to\infty$ such that
\begin{align}\label{lem:mean_alt}
\lim_{n\to\infty}\sup_{{\bm \beta}\in \Xi(s,A)}\P_{n,\theta,{\bm \beta}}(\tanh(\bar{\phi})\le n^{-1/4}K_n)=0.
\end{align}
Given \eqref{lem:mean_alt}, we first finish the proof of the theorem. Note that
\begin{align}
\begin{split}
\sum\limits_{i<j}\Big[Y_{ij}-\tanh(\bar{\phi})\Big]=&\sum\limits_{i<j}\Big[\tanh\Big(\frac{\phi_{i}+\phi_{j}}{2}+\frac{\beta_{i}+\beta_{j}}{2}\Big)-\tanh(\bar{\phi})\Big]\\
\ge &\sum\limits_{i<j}\Big[\tanh\Big(\frac{\phi_{i}+\phi_{j}}{2}\Big)-\tanh(\bar{\phi})\Big]\\
\gtrsim &- \sum_{i<j}\Big(\frac{\phi_{i}+\phi_{j}}{2}-\bar{\phi}\Big)^2\gtrsim -n\sum_{i=1}^n(\phi_i-\bar{\phi})^2.
\end{split}
\end{align}
Using \eqref{lem:mean_alt} and Lemma \ref{1.3} along with the above display we have
$$\lim_{n\to\infty}\sup_{{\bm \beta}\in \Xi(s,A)}\P_{n,\theta,{\bm \beta}}\Big(\sum\limits_{i<j}Y_{ij}\le n^{3/2} K_n)=0.$$
and so Type II error converges to $0$.
Since
\begin{align*}
\begin{split}
\P_{\mathcal{H}_{0}}\left(\sum\limits_{i<j}Y_{ij}>n^{\frac{3}{2}}K_{n}^{\frac{1}{2}}\right) \to 0.
\end{split}
\end{align*}
using \cite[Theorem 1.1]{Sumit2020two}, Type I error converges to $0$ as well. This shows that the test which rejects for large values of $\sum_{i<j}Y_{ij}$ is asymptotically powerful.
\\

It thus remains to verify \eqref{lem:mean_alt}. To this end, 
 assume without loss of generality that 
\begin{align*}
\beta_i=&A\text{ if }1\le i\le s\\
=&0\text{ if }s+1\le i\le n,
\end{align*}
where $A=n^{t}$. Also if $b+t+1/2>0$, replacing $t$ by $t':=\min(t,-1/2)$ we have $$b+t'+1/2=\min\Big(b+t+1/2, b-\frac{1}{2}+\frac{1}{2}\Big)=\min(b+t+1/2,b)>0.$$
 Since the distribution of $\bar{\phi}$ is stochastically increasing in $A$,  without loss of generality by replacing $t$ by $t'$ if necessary we can assume $t\le -\frac{1}{2}$, which gives $A\le n^{-1/2}$.   
 Using Taylor's series expansion twice, we have
\begin{align*}
&\log \cosh\Big(\frac{\phi_i+\phi_j}{2}+\frac{\beta_i+\beta_j}{2}\Big)-\log\cosh\Big(\frac{\phi_i+\phi_j}{2}\Big)\\
=&\frac{\beta_i+\beta_j}{2}\tanh\Big(\frac{\phi_i+\phi_j}{2}\Big)+O (\beta_i+\beta_j)^2\\
=&\frac{\beta_i+\beta_j}{2}\tanh( \bar{\phi})+O\Big((\beta_i+\beta_j)|\phi_i+\phi_j-2\bar{\phi}|\Big)+O(\beta_i+\beta_j)^2.
\end{align*}
Summing over $i<j$ and using \eqref{definef} and \eqref{definep} we get
\begin{align}\label{eq:p_recall}
\begin{split}
-\log f_{n,\theta,{\bm\beta}}(\phi)=&-\log f_{n,\theta,{\bm 0}}(\phi)-\frac{(n-1)sA}{2}\tanh( \bar{\phi})\\
+&O\left(nA\sum_{i=1}^s|\phi_i-\bar{\phi}|+sA \sum_{i=1}^n|\phi_i-\bar{\phi}|+nsA^2\right),
\end{split}
\end{align}
where 
\begin{align}\label{eq:fh_0}
\notag-\log f_{n,\theta,{\bm 0}}(\phi):=&\sum_{i<j}\Big[ \frac{1}{8}(\phi_i-\phi_j)^2+q\Big(\frac{\phi_i+\phi_j}{2}\Big)\Big]\\
=&\frac{n}{8}\sum_{i=1}^n(\phi_i-\bar{\phi})^2+\sum_{i<j} q\Big(\frac{\phi_i+\phi_j}{2}\Big),
\end{align}
with $q(.)$ as in \eqref{eq:q}.
As the notation above suggests, $f_{n,\theta,{\bm 0}}$ defined above is the (unnormalized) density of $\phi$ under $\mathcal{H}_0$.
Using \eqref{eq:p_recall}, along with Lemma \ref{1.3} we have
\begin{align*}
-\log f_{n,\theta,{\bm\beta}}(\phi)=-\log f_{n,\theta,{\bm 0}}(\phi)-\frac{nsA}{2}\tanh( \bar{\phi})-R_n, 
\end{align*}
where $$\E_{n,\theta,{\bm \beta}}|R_n|\lesssim \sqrt{n} s A+nsA^2\lesssim \sqrt{n}sA$$ using $A\le n^{-1/2}$.
Thus, for any $K$ fixed and $K_n':=n^{3/4}sA$ we have
\begin{align*}
&\P_{n,\theta,{\bm \beta}}(\tanh( \bar{\phi})<Kn^{-1/4}) \\
\leq &\P_{n,\theta,{\bm \beta}}(|R_n|>K_n')+\P_{n,\theta,{\bm \beta}}(\tanh( \bar{\phi})<K n^{-1/4}, |R_n|\le K_n')\\
\le & \P_{n,\theta,{\bm \beta}}(|R_n|>K_n')+ e^{K_n'} \frac{\E_{\mathcal{H}_0} \exp\Big[\frac{nsA}{2}\tanh( \bar{\phi})\Big]1\Big\{\tanh(\bar{\phi})<K n^{-1/4}\Big\}}{\E_{\mathcal{H}_0} \exp\Big[\frac{nsA}{2}\tanh( \bar{\phi})\Big]1\Big\{|R_n|\le K_n'\Big\}}\\
\le &\P_{n,\theta,{\bm \beta}}(|R_n|>K_n')+ \frac{e^{K_n'+ \frac{Kn^{3/4}sA}{2}-\frac{nsA \tanh(2 Kn^{-1/4})}{2}}}{ \P_{\mathcal{H}_0} \Big(\bar{\phi}>2Kn^{-1/4}, |R_n|\le K_n'\Big)}
\end{align*}
On letting $n\to\infty$ and noting that $K_n'=n^{3/4} sA\gg \sqrt{n} sA2$ we have
$$\lim_{n\to\infty} \sup_{{\bm \beta}\in \Xi(s,A)}\P_{n,\theta,{\bm \beta}}(|R_n|\le K_n')=0, \text{ and } \lim_{n\to \infty} \P_{\mathcal{H}_0} \Big(\bar{\phi}>2Kn^{-1/4}, |R_n|\le K_n'\Big)=\P(\zeta>2K)>0,$$
where $\zeta $ has density proportional to $e^{-\zeta^4/12-\zeta^2/24}$ (c.f.~\cite[Lemma 4.2]{Sumit2020two}).
Combining the last two displays we have
$$\lim_{n\to\infty} \sup_{{\bm \beta}\in \Xi(s,A)}\P_{n,\theta,{\bm \beta}}(\tanh( \bar{\phi})<Kn^{-1/4}) =0.$$
Since this holds for every fixed $K$, there exists $K_n\to\infty$ such that
$$\limsup_{n\to\infty}\sup_{{\bm \beta}\in \Xi(s,A)} \P_{n,\theta,{\bm \beta}}(\tanh( \bar{\phi})<K_nn^{-1/4}) =0.$$
This verifies \eqref{lem:mean_alt}, and hence completes the proof of the theorem.

\end{proof}

\section{Proof of  parts (a) and (c) Theorems \ref{Unique_dense} and \ref{Nonunique_dense}}\label{sec:two_star} 

With $\Xi(s,A)$ as defined in \eqref{Alternative}, consider the following subset of $\Xi(s,A)$.
\begin{align}\label{alter}
\Tilde{\Xi}(s,A):=\Big\{\bm{\beta}=\beta_{0}{\bf 1}+\bm{\mu}:|\text{supp}(\bm{\mu})|=s, and\ \mu_{i}=A, i\in\text{supp}(\bm{\mu})\Big\}.
\end{align}
Let $\pi(d\bm{\beta})$ be a prior on ${\Xi}(s,A)$, which put probability mass $1/\tbinom{n}{s}$ on each of configurations in $\Tilde{\Xi}(s,A)$. And let  $\Q_{\pi}(.):=\int\P_{n,\theta,{\bm \beta}}(.)\pi(d\bm{\beta})$ denote the marginal distribution of $Y$ under this prior. To show that all tests for the problem \eqref{Hypothesis_testing} are asymptotically powerless, using the second moment method it suffices to show that
\begin{align}\label{Likelihood_Ratio}
\lim_{n\to\infty}\E_{\mathcal{H}_0}L_\pi(Y)^2=1, \text{ where }L_{\pi}(Y):=\frac{\Q_{\pi}(Y)}{\P_{\mathcal{H}_{0}}(Y)}
\end{align}
is the likelihood ratio. The following lemma gives an upper bound to the second moment of $L_\pi(.)$.
\begin{lem}\label{LBlemma}
For any $(\theta,\beta_{0})$, with $L_\pi(.)$ as defined in \eqref{Likelihood_Ratio} we have 
\begin{align}\label{boundboundbound}
\E_{\mathcal{H}_{0}}L_{\pi}^{2}(Y)\leq\exp\left\{A^{2}s^{2}Cov_{\bm{\beta} = (\beta_{0}/2)\bm{1}}(k_{1},k_{2})+\frac{2s^{2}}{n}(e^{A^{2}Var_{\bm{\beta} = (\beta_{0}/2)\bm{1}}(k_{1})}-1)\right\},
\end{align}
whenever $n>2s$.
\end{lem}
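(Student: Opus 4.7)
The plan is to exploit the exponential family structure of the two-star model \eqref{Y:edge_star} together with convexity of the log-partition function, reducing the second moment to an MGF of the intersection size $|S_{1}\cap S_{2}|$ of two iid prior draws. First I would expand by Fubini,
$$\mathbb{E}_{\mathcal{H}_0}L_{\pi}^{2}(Y)=\mathbb{E}_{S_{1},S_{2}\stackrel{\mathrm{iid}}{\sim}\pi}\;\mathbb{E}_{\mathcal{H}_0}\!\left[\frac{\mathbb{P}_{n,\theta,\bm{\beta}_{S_{1}}}(Y)\,\mathbb{P}_{n,\theta,\bm{\beta}_{S_{2}}}(Y)}{\mathbb{P}_{\mathcal{H}_0}(Y)^{2}}\right],$$
where $\bm{\beta}_{S}:=\beta_{0}\mathbf{1}+A\mathbf{1}_{S}$. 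Using that $\mathbb{P}_{n,\theta,\bm{\beta}_{S}}(Y)/\mathbb{P}_{\mathcal{H}_0}(Y)=[Z(\beta_{0}\mathbf{1})/Z(\bm{\beta}_{S})]\exp\{(A/2)\sum_{i\in S}k_{i}\}$, the inner expectation collapses to a ratio of partition functions: with $\psi(\bm{\beta}):=\log Z_{n}(\bm{\beta},\theta)$,
$$\mathbb{E}_{\mathcal{H}_0}[L_{S_{1}}L_{S_{2}}]=\exp\!\Big\{\psi(\beta_{0}\mathbf{1})+\psi(\beta_{0}\mathbf{1}+A(\mathbf{1}_{S_{1}}+\mathbf{1}_{S_{2}}))-\psi(\bm{\beta}_{S_{1}})-\psi(\bm{\beta}_{S_{2}})\Big\}.$$

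Next, I would apply the mixed second-difference identity
$$\psi(\eta)+\psi(\eta+u+v)-\psi(\eta+u)-\psi(\eta+v)=\int_{0}^{1}\!\int_{0}^{1}\!\langle u,\,\nabla^{2}\psi(\eta+su+tv)\,v\rangle\,ds\,dt$$
with $\eta=\beta_{0}\mathbf{1}$, $u=A\mathbf{1}_{S_{1}}$, $v=A\mathbf{1}_{S_{2}}$. Since $\partial^{2}\psi/\partial\beta_{i}\partial\beta_{j}$ is a constant multiple of $\mathrm{Cov}_{\bm{\beta}}(k_{i},k_{j})$, this expresses the log of the previous display as a $[0,1]^{2}$-integral of weighted degree-degree covariances along an interpolating family of measures. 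I would then invoke GHS monotonicity (which applies because the two-star ERGM is an Ising model with ferromagnetic interaction $\theta>0$) to dominate each interpolated covariance by its value at the single reference measure $\bm{\beta}=(\beta_{0}/2)\mathbf{1}$, yielding
$$\log\mathbb{E}_{\mathcal{H}_0}[L_{S_{1}}L_{S_{2}}]\;\le\;A^{2}\sum_{i\in S_{1},\,j\in S_{2}}\mathrm{Cov}_{\bm{\beta}=(\beta_{0}/2)\mathbf{1}}(k_{i},k_{j}).$$

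By exchangeability of the reference measure the covariance depends only on whether $i=j$, so the sum splits into a diagonal contribution $|S_{1}\cap S_{2}|\,v$ (with $v:=\mathrm{Var}_{(\beta_{0}/2)\mathbf{1}}(k_{1})$) and an off-diagonal contribution bounded by $s^{2}\tau$ (with $\tau:=\mathrm{Cov}_{(\beta_{0}/2)\mathbf{1}}(k_{1},k_{2})$), so $\log\mathbb{E}_{\mathcal{H}_0}[L_{S_{1}}L_{S_{2}}]\le A^{2}s^{2}\tau+A^{2}|S_{1}\cap S_{2}|\,v$. I would then take expectation over $\pi\otimes\pi$: the intersection size $|S_{1}\cap S_{2}|$ is hypergeometric with mean $s^{2}/n$, and by negative association and a standard Chernoff-type estimate (valid for $n>2s$),
$$\mathbb{E}\exp\{\lambda|S_{1}\cap S_{2}|\}\;\le\;\exp\{(2s^{2}/n)(e^{\lambda}-1)\},$$
which, applied with $\lambda=A^{2}v$, produces the claimed bound.

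The main obstacle is the GHS-reduction step: passing from the $(s,t)$-interpolated covariance inside the Hessian integral to the covariance at the fixed reference measure $\bm{\beta}=(\beta_{0}/2)\mathbf{1}$. This requires invoking the ferromagnetic Ising structure of the two-star ERGM and tracking the factor-of-$1/2$ rescaling between the parameter $\bm{\beta}$ in \eqref{Y:edge_star} and the natural parameter of the exponential family. The remaining steps---partition-function expansion, diagonal/off-diagonal split of the covariance sum, and the hypergeometric MGF bound---are essentially bookkeeping once this reduction is in hand.
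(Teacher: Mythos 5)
Your approach is essentially the one the paper uses: expand $\E_{\mathcal{H}_0}L_\pi^2$ via Fubini, collapse to a ratio of four partition functions, express the log of that ratio as an integrated mixed second derivative of $\log Z_n$ (i.e.\ a covariance along an interpolating path), then invoke GHS to dominate by the covariance at the null reference measure, split into diagonal/off-diagonal terms, and finish with the hypergeometric-vs-binomial stochastic domination and a Chernoff estimate. The only stylistic difference is that you use a clean two-parameter interpolation
$\psi(\eta)+\psi(\eta+u+v)-\psi(\eta+u)-\psi(\eta+v)=\int_0^1\!\int_0^1\langle u,\nabla^2\psi(\eta+su+tv)\,v\rangle\,ds\,dt$
whereas the paper telescopes over $j\in S_1$ with nested single integrals; the two are equivalent and yours is arguably the cleaner presentation.

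One point you should make explicit, which the paper does address: GHS requires all external fields to have the same sign. Along your interpolating path the fields are $\beta_0+sA\mathbf{1}_{S_1,i}+tA\mathbf{1}_{S_2,i}$, so for $\beta_0\ge 0$ they are automatically nonnegative and GHS applies directly; for $\beta_0<0$ one needs $A$ small (which is the regime of interest, $A\to 0$) so the fields stay nonpositive, in which case GHS applies after a global spin flip. Without this remark the claim \enquote{dominate each interpolated covariance by its value at the reference measure} is not justified for $\beta_0<0$. This is a small gap, not a flaw in strategy, and your parenthetical comment about \enquote{tracking the factor-of-$1/2$ rescaling} suggests you are already alert to the parametrization issues at play.
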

\begin{proof}
Define $\Lambda_s:=\{S\big{|}S\subset\{1,2,...,n\}, |S|=s\}$. For any $S\in\Lambda_s$, define a vector $\bm{\beta}_{S}$ by setting
\begin{align*}
 \beta_{S,i}=&\beta_{0}+A\text{ if }i\in S,\\
 =&\beta_{0}\text{ if }i\notin S.
 \end{align*} 
By symmetry, the normalizing constant $Z_{n}(\bm{\beta}_{S},\theta)$ is the same for all $S\in\Lambda_s$, which we denote by $Z_{n}(\bm{\beta}_{[s]},\theta)$ for the rest of this proof.
Then, a direct calculation gives
\begin{align}\label{calculation}
\notag\E_{\mathcal{H}_{0}}L_{\pi}^{2}(Y)&=\frac{Z_{n}^{2}(\beta_{0},\theta)}{Z_{n}^{2}(\bm{\beta}_{[s]},\theta)}\frac{1}{\binom{n}{s}^{2}}\E_{\mathcal{H}_{0}}\sum\limits_{S_{1},S_{2}\in\Lambda_s}e^{\sum\limits_{j\in S_{1}}\frac{A}{2}k_{j}+\sum\limits_{j\in S_{2}}\frac{A}{2}k_{j}} \\
\notag&=\frac{Z_{n}(\beta_{0},\theta)}{Z_{n}^{2}(\bm{\beta}_{[s]},\theta)}\frac{1}{\binom{n}{s}^{2}}\sum\limits_{S_{1},S_{2}\in\Lambda}\frac{Z_{n}(\bm{\beta}_{S_{1}}+\bm{\beta}_{S_{2}},\theta)}{Z_{n}(\bm{\beta}_{S_{1}}+\bm{\beta}_{S_{2}},\theta)}\sum\limits_{Y}e^{\frac{\theta}{2n}\sum\limits_{i=1}^{n}k_{i}^{2}+\sum\limits_{j=1}^{n}\frac{\beta_{S_{1},j}+\beta_{S_{2},j}}{2}k_{j}}\\
&=\frac{1}{\binom{n}{s}^{2}}\sum\limits_{S_{1},S_{2}\in\Lambda}\frac{Z_{n}(\beta_{0},\theta)Z_{n}(\bm{\beta}_{S_{1}}+\bm{\beta}_{S_{2}},\theta)}{Z_{n}(\bm{\beta}_{S_{1}},\theta)Z_{n}(\bm{\beta}_{S_{2}},\theta)}=\frac{1}{\binom{n}{s}^{2}}\sum\limits_{S_{1},S_{2}\in\Lambda}R_{S_1,S_2},
\end{align}
where 
\begin{align*}
R_{S_{1},S_{2}}:=&\log\left(\frac{Z_{n}(\beta_{0},\theta)Z_{n}(\bm{\beta}_{S_{1}}+\bm{\beta}_{S_{2}},\theta)}{Z_{n}(\bm{\beta}_{S_{1}},\theta)Z_{n}(\bm{\beta}_{S_{2}},\theta)}\right)\\
=&\log Z_{n}(\bm{\beta}_{S_{1}}+\bm{\beta}_{S_{2}},\theta)-\log Z_{n}(\bm{\beta}_{S_{2}},\theta)-\log Z_{n}(\bm{\beta}_{S_{1}},\theta)+\log Z_{n}(\beta_{0},\theta).
\end{align*}
Setting $W=S_{1}\bigcap S_{2}$, note that $R_{S_1,S_2}$ only depends on $|W|$ by symmetry. Thus, without loss of generality we assume that $S_{1}=\{1,2,3,...,s\}$ and $S_{2}=\{1,2,...,w,s+1,s+2,...,2s-w\}$. Consequently we have
\begin{align*}
R_{S_{1},S_{2}}
=&\sum_{j\in S_1}\Big[\log Z_{n}(\bm{\beta}_{[j]}+\bm{\beta}_{S_{2}},\theta)-\log Z_{n}(\bm{\beta}_{[j-1]}+\bm{\beta}_{S_{2}},\theta)-\log Z_{n}(\bm{\beta}_{[j]},\theta)+\log Z_{n}(\bm{\beta}_{[j-1]},\theta)\Big],
\end{align*}
where $\bm{\beta}_{[j]}$ denotes the vector $\bm{\beta}$ which equals $A$ on first $j$ entries, and $\beta_{0}$ for rest of its entries, 
The summand in the RHS above equals 
\begin{align*}
&\log Z_{n}(\bm{\beta}_{[j]}+\bm{\beta}_{S_{2}},\theta)-\log Z_{n}(\bm{\beta}_{[j-1]}+\bm{\beta}_{S_{2}},\theta)-\log Z_{n}(\bm{\beta}_{[j]},\theta)+\log Z_{n}(\bm{\beta}_{[j-1]},\theta) \\
&=\int_{0}^{A}\frac{\partial\log Z_{n}(\bm{\beta}_{[j-1]}+\bm{\beta}_{S_{2}}+\gamma\mathbf{e_{j}},\theta)}{\partial\beta_{j}}d\gamma-\int_{0}^{A}\frac{\partial\log Z_{n}(\bm{\beta}_{[j-1]}+\gamma\mathbf{e_{j}},\theta)}{\partial\beta_{j}}d\gamma \\
&=\int_{0}^{A}A\sum\limits_{r\in S_{2}}\frac{\partial\log Z_{n}(\bm{\beta}_{[j-1]}+\bm{\xi}+\gamma\mathbf{e_{j}})}{\partial\beta_{j}\partial\beta_{r}}|_{\bm{\xi}\preceq\bm{\beta}_{S_{2}}}d\gamma\\
&=\int_{0}^{A}A\sum\limits_{r\in S_{2}}Cov_{\bm{\beta}=\bm{\beta}_{[j-1]}+\bm{\xi}+\gamma\mathbf{e_{j}}}(k_{j},k_{r})d\gamma 
\end{align*}
If $A\to 0$, then ${\bm \beta}\ge {\bf 0}$ if $\beta_0\ge 0$, and ${\bm \beta}\le {\bf 0}$ for all $n$ large if $\beta_0<0$. Note that the GHS inequality \cite{lebowitz1974ghs} holds if either ${\bm \beta}\ge {\bf 0}$ or ${\bm \beta}\le {\bf 0}$ (the second conclusion follows on noting that $Cov_{\bm \beta}(k_r,k_s)=Cov_{\bm \beta}(-k_r,-k_s)$,
thereby giving $$Cov_{\bm{\beta}=\bm{\beta}_{[j-1]}+\bm{\xi}+\gamma\mathbf{e_{j}}}(k_{j},k_{r})\le Cov_{\bm{\beta}=(\beta_{0}/2)\bm{1}}(k_{j},k_{r}).$$
  Combining the above two displays, this gives
\begin{align*}
R_{S_{1},S_{2}}&\leq\sum\limits_{j\in S_{1}}\int_{0}^{A}A\sum\limits_{r\in S_{2}}Cov_{\bm{\beta}=(\beta_{0}/2)\bm{1}}(k_{j},k_{r})d\gamma \\
&=A^{2}w Var_{\bm{\beta}=(\beta_{0}/2)\bm{1}}(k_{1})+A^{2}(s^{2}-w)Cov_{\bm{\beta}=(\beta_{0}/2)\bm{1}}(k_{1},k_{2}).
\end{align*}
Along with \eqref{calculation}, this further gives
$$\E_{\mathcal{H}_{0}}L_{\pi}^{2}(Y)\leq\exp{\{A^{2}s^{2}Cov_{\bm{\beta}=(\beta_{0}/2)\bm{1}}(k_{1},k_{2})\}}\E_{W}\exp\{A^{2} Var_{\bm{\beta}=(\beta_{0}/2)\bm{1}}(k_{1}) W\} $$
where $W$ follows Hypergeometric distribution with parameters $(n,s,s)$. Since $2s<n$, $W$ is stochastically dominated by a binomial distribution with parameters $\Big(s,\frac{s}{n-s}\Big)$ (\cite[Lemma 6.1]{mukherjee2018detection}), which gives
$$\E_{W}\exp\{A^{2} Var_{\bm{\beta}=(\beta_{0}/2)\bm{1}}(k_{1}) W\}\leq\exp\left\{\frac{2s^{2}}{n}(e^{A^{2}Var_{\bm{\beta}=(\beta_{0}/2)\bm{1}}(k_{1})}-1)\right\}.$$
Combining the last two displays, we have verified \eqref{boundboundbound}.
\end{proof}
\subsection{\textbf{Proof of Parts (a) and (c) of Theorem \ref{Unique_dense}}}
With $L_{\pi}$ as in defined in \eqref{Likelihood_Ratio},
it suffices to show that $$\lim_{n\rightarrow\infty}\E_{\mathcal{H}_{0}}L_{\pi}^{2}(Y)=1.$$
 By  \cite[Lemma 4.4]{Sumit2020two} we have
\begin{align}
Var_{\bm{\beta}=(\beta_{0}/2)\bm{1}}(\sum_{e\in\mathcal{E}}Y_{e})\lesssim n^{2},    
\end{align}
which gives the existence of a constant $c$ depending on $\theta$ such that
\begin{align}
Var_{\bm{\beta}=(\beta_{0}/2)\bm{1}}(k_{1})\le cn,\quad Cov_{\bm{\beta}=(\beta_{0}/2)\bm{1}}(k_{1},k_{2})\le c   
\end{align}
Using this along with Lemma \ref{LBlemma} gives
\begin{align}\label{eq:LBlemma}
\E_{\mathcal{H}_{0}}L_{\pi}^{2}(Y)\le \exp\left\{cA^2 s^2 +\frac{2s^2}{n}(e^{cA^2n}-1)\right\}.
\end{align}

\subsubsection{Proof of part (a)}
 In this regime we have $s=n^b$ and $A=n^t$ with $b\ge \frac{1}{2}$ and $b+t<0$. This gives $$\max(A^2 n,A^2s^2)=\max(n^{2t+1}, n^{2t+2b})=n^{2t+2b}\to 0,$$ using which the exponent in the RHS of \eqref{eq:LBlemma}  converges to $0$. This completes the proof of part (a).
 
 \subsubsection{Proof of part (c)}
  In this regime we have $s=n^b$ and $A=n^t$ with $b< \frac{1}{2}$ and $t\le -\frac{1}{2}$. This gives $A^2s^2=n^{2b+2t}\to 0$. Also $$\frac{s^2}{n} e^{cA^2n-1}\le e^{c-1}\frac{s^2}{n} = e^{c-1}n^{2b-1}\to 0.$$
  Consequently, the RHS of \eqref{eq:LBlemma}  again converges to $0$. This completes the proof of part (c).

   \subsection{\textbf{Proof of Theorem \ref{Critical Point} Part (a)}}

As before, with $L_{\pi}$ defined in \eqref{Likelihood_Ratio}, it is sufficient to show that $$\lim_{n\rightarrow\infty}\E_{\mathcal{H}_{0}}L_{\pi}^{2}(Y)=1.$$
To this effect, using Theorem 2.4 \& Lemma 4.8 in \cite{Sumit2020two} we get
\begin{align*}
Var_{\mathcal{H}_{0}}(\sum_{e\in\mathcal{E}}Y_{e})\lesssim n^{3}.
\end{align*}
Along with the non-negativity of covariance, this gives 
\begin{align*}
Cov_{\mathcal{H}_{0}}(k_{1},k_{2})=O(n).
\end{align*}
For getting the optimal bound on $Var_{\mathcal{H}_{0}}(k_{1})$, use \eqref{auxil} to get
\begin{align*}
Var_{\mathcal{H}_{0}}(k_1)\lesssim n^2Var_{\mathcal{H}_{0}}(\phi_1)+n
\lesssim& n^2\Big[Var_{\mathcal{H}_{0}}(\bar{\phi})+Var_{\mathcal{H}_{0}}(\phi_1-\bar{\phi})\Big]+n\lesssim n,
\end{align*}
where the last inequality uses \cite[Lemma 4.1]{Sumit2020two}.
Combing the above two displays along with Lemma \ref{LBlemma} gives the existence of a constant $c$ free of $n$, such that
\begin{align}\label{eq:LBlemma2}
\E_{\mathcal{H}_{0}}L_{\pi}^{2}(Y)\le \exp\left\{cA^2 s^2n +\frac{2s^2}{n}(e^{cA^2n}-1)\right\}.
\end{align}

 
Now, recall that in this regime we have $s=n^b$ and $A=n^t$ with $b+t+\frac{1}{2}<0$. This gives $A^2s^2 n=n^{2b+2t+1}\to 0$. Also, noting that $2t+1<0$ we have
 $$\frac{s^2}{n}(e^{cA^2n}-1)\le  n^{2b-1} (e^{c n^{2t+1}}-1)\lesssim n^{2b+2t+1}\to 0.$$
 Along with \eqref{eq:LBlemma2}, this gives $\lim_{n\to\infty}\E_{\mathcal{H}_{0}}L_{\pi}^{2}(Y)=1$. This completes the proof of part (b).

\subsection{\textbf{Proof of Theorem \ref{Nonunique_dense} parts (a) and (c)}}
We first state the following lemma about the function $q(.)$ introduced in \eqref{eq:q}, the proof of which follows from straightforward calculus (see for e.g. \cite{dembo2010gibbs}). 

\begin{lem}\label{lem:calculus}
If $\theta>1/2$, the equation $q'(x)=2\theta[ x-\theta \tanh(2\theta x)]$  has a unique positive root $t$, say, on $(0,\infty)$. Further, $t$ is the unique global minimizer of $q(.)$ on $[0,\infty)$. 
\end{lem}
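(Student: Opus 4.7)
The plan is to treat this as a single-variable calculus problem about $q(x)=\theta x^2-\log\cosh(2\theta x)$. First I would record the identity $q'(x)=2\theta\bigl[x-\tanh(2\theta x)\bigr]$ (correcting what appears to be a typographical extra $\theta$ in the displayed form) and then study the auxiliary function $h(x):=x-\tanh(2\theta x)$ on $[0,\infty)$. The positive roots of $q'$ are exactly the positive roots of $h$, so everything reduces to analyzing $h$.

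For the uniqueness of the positive root, I would compute $h'(x)=1-2\theta\,\mathrm{sech}^2(2\theta x)$. Since $\mathrm{sech}^2(2\theta x)$ decreases strictly and continuously from $1$ at $x=0$ to $0$ as $x\to\infty$, the derivative $h'$ increases strictly and continuously from $h'(0)=1-2\theta<0$ (using $\theta>1/2$) up to $\lim_{x\to\infty}h'(x)=1>0$. Hence there is a unique $x_0>0$ with $h'(x_0)=0$, and $h$ is strictly decreasing on $(0,x_0)$ and strictly increasing on $(x_0,\infty)$. Combining $h(0)=0$, $h(x_0)<h(0)=0$, and $h(x)\to\infty$, the intermediate value theorem together with monotonicity on $(x_0,\infty)$ yields exactly one root $t\in(x_0,\infty)$, and no other root in $(0,\infty)$.

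For the second claim, since $q'=2\theta h$ and $\theta>0$, the sign analysis of $h$ transfers directly: $q'<0$ on $(0,t)$ and $q'>0$ on $(t,\infty)$. Therefore $q$ is strictly decreasing on $[0,t]$ and strictly increasing on $[t,\infty)$, so $t$ is the unique global minimizer of $q$ on $[0,\infty)$ (and in particular $q(t)<q(0)=0$).

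The argument is entirely elementary; I do not anticipate any genuine obstacle. The only mildly delicate point is verifying the monotonicity of $h'$ rigorously (i.e., that $\mathrm{sech}^2(2\theta x)$ is strictly decreasing on $(0,\infty)$), but this is immediate from $\frac{d}{dx}\mathrm{sech}^2(2\theta x)=-4\theta\,\mathrm{sech}^2(2\theta x)\tanh(2\theta x)<0$ for $x>0$.
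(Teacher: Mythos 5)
Your proof is correct, and it is the standard single-variable calculus argument; the paper itself gives no proof, simply remarking that the claim "follows from straightforward calculus" with a citation, so your write-up fills in exactly the intended argument. You are also right that the displayed $q'(x)=2\theta[\,x-\theta\tanh(2\theta x)\,]$ contains a stray $\theta$: differentiating $q(x)=\theta x^2-\log\cosh(2\theta x)$ gives $q'(x)=2\theta x-2\theta\tanh(2\theta x)=2\theta[\,x-\tanh(2\theta x)\,]$, which is consistent with $t$ satisfying $t=\tanh(2\theta t)$ as used elsewhere in the paper (e.g.\ in the proof of Lemma \ref{lem:calculus2}, where $q_a'(x)=2\theta[x-\tanh(2\theta x+a)]$). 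One microscopic point worth making explicit: after establishing $q'<0$ on $(0,t)$ and $q'>0$ on $(t,\infty)$, you should note $q'(0)=0$, so strict monotonicity holds on the open intervals, which still yields $q(x)>q(t)$ for every $x\in[0,\infty)\setminus\{t\}$; your conclusion $q(t)<q(0)=0$ already captures this. No gaps.
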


We will use the notation $t$ introduced in the above lemma throughout the rest of the paper. Set
\begin{align}\label{eq:u}
 U:=\cap_{i=1}^n V_i, \quad V_{i}:=\{Y:k_i\ge (n-1)t/2\}.
 \end{align}
Restricting the probability measure \eqref{Y:edge_star} to the set $U$, define the probability measure $\P_{n,\bm{\beta},U}(\cdot)$ by setting
\begin{align}\label{restricted_measure}
\P_{n,\bm{\beta},U}(Y)=\frac{1}{Z^{+}_{n}(\bm{\beta},\theta)}\exp\left\{\frac{\theta}{2n}\sum\limits_{i=1}^{n}k_{i}^{2}+\frac{1}{2}\sum\limits_{i=1}^{n}\beta_{i}k_{i}\right\}1\{Y\in U\}.
\end{align}
where $$Z^{+}_{n}(\bm{\beta},\theta)=\sum_{Y\in U}\exp\left\{\frac{\theta}{2n}\sum\limits_{i=1}^{n}k_{i}^{2}+\frac{1}{2}\sum\limits_{i=1}^{n}\beta_{i}k_{i}\right\}$$
is the restricted normalizing constant.
As before, consider the sub parameter space $\Tilde{\Xi}(s,A)$ defined in \eqref{alter}, let $\pi(d\bm{\beta})$ be a prior on $\Tilde{\Xi}(s,A)$, which put probability mass $1/\tbinom{n}{s}$ on each of configurations in $\Tilde{\Xi}(s,A)$. And let  $\Q_{\pi,U}(.):=\int\P_{n,\bm{\beta},U}(.)\pi(d\bm{\beta})$ denote the mixed alternative distribution of $Y$. 
Since \cite[Lem 4.3]{Sumit2020two} gives $\P_{\mathcal{H}_0}(U)\to 1/2$, to verify the absence of asymptotically powerful tests setting
\begin{align}\label{ratio}
L_{\pi,U}(Y):=\frac{\Q_{\pi,U}(Y)}{\P_{\mathcal{H}_{0},U}(Y)},
\end{align}
it suffices to show:
\begin{align}\label{sufficient_condition}
\E_{\mathcal{H}_{0},U}L_{\pi,U}^{2}(Y)\to 1.
\end{align}
Proceeding similar to Lemma \ref{LBlemma}, we get 
\begin{align}
\begin{split}\label{eq:2nd}
\E_{\mathcal{H}_{0},U}L_{\pi,U}^{2}(Y)&
=\frac{1}{\binom{n}{s}^{2}}\sum\limits_{S_{1},S_{2}\in\Lambda}\frac{Z_{n}^{+}(0,\theta)Z_{n}^{+}(\bm{\beta}_{S_{1}}+\bm{\beta}_{S_{2}},\theta)}{Z_{n}^{+}(\bm{\beta}_{S_{1}},\theta)Z_{n}^{+}(\bm{\beta}_{S_{2}},\theta)}.
\end{split}   
\end{align}
Setting $R^+_{S_{1},S_{2}}$ as
$$R^+_{S_{1},S_{2}}:=\left(\log Z_{n}^{+}(\bm{\beta}_{S_{1}}+\bm{\beta}_{S_{2}},\theta\log Z_{n}^{+}(\bm{\beta}_{S_{2}},\theta))-(\log Z_{n}^{+}(\bm{\beta}_{S_{1}},\theta)-\log Z_{n}^{+}(0,\theta)\right).$$
A Taylor's series expansion gives 
\begin{align}\label{RSS}
R_{S_{1},S_{2}}
=A^{2}\sum\limits_{i\in S_{1}}\sum\limits_{j\in S_{2}}Cov_{\bm{\delta}=\alpha{\bf 1}_{S_{1}}+\gamma {\bf 1}_{S_{2}}}(k_{i},k_{j}|U)
\end{align}
where $\alpha,\gamma\in(0,A)$ and ${\bf 1}_{S}$ denote vector having unit signals at $S$, and $\bm{\delta}:=\alpha{\bf 1}_{S_{1}}+\gamma{\bf 1}_{S_{2}}\in [0, 2 n^{-1/2}]^n$. We now claim that
\begin{lem}\label{lem:cov}
$$\max_{1\le i<j\le n}\sup_{{\bm \beta}\in [0, 2n^{-1/2}]^n}Cov_{{\bm \beta}}(k_i,k_j|U)\lesssim 1.$$
\end{lem}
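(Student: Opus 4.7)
\textbf{Plan for Lemma \ref{lem:cov}.}

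My plan is to pass through the auxiliary variables of Proposition~\ref{thm:bayes}. Since $\phi_{i}|Y\sim N(k_{i}/(n-1),1/(\theta(n-1)))$ with components conditionally independent, the tower property gives, for $i\neq j$,
\[
Cov_{{\bm \beta}}(\phi_i,\phi_j|U)\;=\;\frac{Cov_{{\bm \beta}}(k_i,k_j|U)}{(n-1)^{2}},
\]
so proving the lemma reduces to showing $Cov_{{\bm \beta}}(\phi_i,\phi_j|U)\lesssim n^{-2}$ uniformly in $i\neq j$ and in ${\bm \beta}\in [0,2n^{-1/2}]^{n}$. Writing $\phi_i=\bar\phi+\psi_i$ with $\psi_i:=\phi_i-\bar\phi$, I decompose
\[
Cov(\phi_i,\phi_j|U)\;=\;Var(\bar\phi|U)+Cov(\psi_i,\psi_j|U)+Cov(\bar\phi,\psi_j|U)+Cov(\psi_i,\bar\phi|U),
\]
and show each piece is $O(n^{-2})$.

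For the first piece, the marginal log-density of $\phi$ (up to the $P(U|\phi)$ factor) is dominated by $\tfrac{n(n-1)}{2}q(\bar\phi)$ coming from \eqref{eq:fh_0}, with $q$ having a unique global minimizer $t>0$ whenever $\theta>1/2$ (Lemma~\ref{lem:calculus}). A Laplace expansion around $\bar\phi=t$, using the stability $q''(t)>0$, yields Gaussian-type fluctuations of variance of order $1/(n(n-1)q''(t))=O(n^{-2})$. For the second piece, I would first establish a moment bound $\max_i E_{{\bm \beta}}|\psi_i|^{\ell}\lesssim n^{-\ell/2}$ (an analogue of Lemma~\ref{1.3} valid under the restricted positive mode for $\theta>1/2$; the argument there goes through because the Hessian of $\sum_{i<j}p_{ij}$ is uniformly positive-definite on the zero-mean subspace near $t\mathbf 1$). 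Combined with the constraint $\sum_i\psi_i=0$ and approximate exchangeability, this forces $Cov(\psi_i,\psi_j|U)\approx -Var(\psi_i|U)/(n-1)=O(n^{-2})$; the cross-covariance terms are handled analogously.

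To turn the near-exchangeability step into a rigorous uniform bound (recall each $|\beta_i|\le 2n^{-1/2}$ breaks strict exchangeability), I would set up the recursive system hinted at in the introduction. Writing $M_n:=\max_{i\neq j}|Cov_{{\bm \beta}}(\phi_i,\phi_j|U)|$ and $V_n:=\max_i Var_{{\bm \beta}}(\phi_i|U)$, integration by parts in $\phi$ applied to the $\phi$-marginal of the restricted measure — using $\partial_{\phi_i}H(\phi)=\theta[(n-1)\phi_i-E[k_i|\phi]]$ with $E[k_i|\phi]=\sum_{\ell}\tanh(\theta(\phi_i+\phi_\ell)+(\beta_i+\beta_\ell)/2)$ — followed by Taylor-expanding $\tanh$ around $t$ produces a linear inequality of the schematic form
\[
M_n\;\le\;\frac{C_1(n-2)}{n-1}\,M_n\,+\,\frac{C_1}{n-1}\,V_n\,+\,\text{lower order},\qquad C_1:=\theta(1-t^{2}).
\]
The stability relation $t=\tanh(2\theta t)$ together with $\theta>1/2$ forces $C_1<1/2$, so this recursion can be solved to yield $M_n\lesssim V_n/n$. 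Combined with the bound $V_n\lesssim n^{-1}$ coming from the Laplace estimate plus the Lemma~\ref{1.3} analogue, this gives $M_n\lesssim n^{-2}$, i.e.\ $Cov_{{\bm \beta}}(k_i,k_j|U)\lesssim 1$.

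The main obstacle is that $U$ is an event in $Y$-space, so the $\phi$-marginal under $\P_{n,{\bm \beta},U}$ is $\propto e^{-H(\phi)}\P(U|\phi)$ rather than $e^{-H(\phi)}$, and Stein's identity in $\phi$ picks up a correction involving $\partial_{\phi_i}\log \P(U|\phi)$. I would control this by showing, via Hoeffding's inequality applied to the independent $\{Y_{i\ell}\}_{\ell}$ given $\phi$, that $\P(U|\phi)$ is super-polynomially close to $1$ whenever $\|\phi-t\mathbf 1\|_{\infty}\le \delta$ — possible because $E[k_i|\phi]\approx (n-1)\tanh(2\theta t)=(n-1)t\gg(n-1)t/2$ in that regime — so that the correction term contributes negligibly compared with the Laplace bulk. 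The moment bound for $\psi_i$ under the restricted measure is the other delicate ingredient, but should follow by adapting the argument for Lemma~\ref{1.3} after restricting to the neighborhood of $t\mathbf 1$.
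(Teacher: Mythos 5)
Your plan is in the same spirit as the paper's proof: both proceed through the auxiliary variable $\phi$, Taylor-expand $\tanh$ around $t$, and close a self-consistent linear system of covariance bounds using the stability inequality $2\theta\,\mathrm{sech}^2(2\theta t)<1$ that comes from $q''(t)>0$. You also correctly identify the identity $Cov_{\bm\beta}(\phi_i,\phi_j\,|\,U) = (n-1)^{-2}Cov_{\bm\beta}(k_i,k_j\,|\,U)$, which the paper likewise exploits. However, the \emph{mechanism} for generating the recursion is different, and this difference is where the gaps in your plan appear. The paper derives the recursion without any integration by parts: it writes $Cov(k_i,k_j|U)=\sum_{a,b}Cov(Y_{ia},Y_{jb}|U)$, replaces $U$ by the $\phi$-measurable rectangle $\widetilde U=\{\phi\in[0,2]^n\}$ at exponentially small cost (Lemma~\ref{lem:cov2}), uses that conditionally on $\phi$ the $Y$-entries are independent to replace $Y_{ia}$ by $r_{ia}=\tanh(\theta(\phi_i+\phi_a)+\tfrac12(\beta_i+\beta_a))$, Taylor expands, and then translates $Cov(\phi_u,\phi_v|\widetilde U)$ back to $(n-1)^{-2}Cov(k_u,k_v|U)$. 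This yields the matrix equation \eqref{eq:Mclaim3} with row sums $\approx 4\theta^2\mathrm{sech}^4(2\theta t)=(2\theta\,\mathrm{sech}^2(2\theta t))^2<1$. Your recursion via Gaussian integration by parts with $\partial_{\phi_i}H(\phi)=\theta[(n-1)\phi_i-\E(k_i|\phi)]$ leads to a coefficient $C_1=\theta\,\mathrm{sech}^2(2\theta t)<\tfrac12$ rather than $C_0=\theta^2\mathrm{sech}^4(2\theta t)$; both close, and both are legitimate, so this is a genuine variant.

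Two points in your plan need strengthening. First, the obstacle you identified --- that the $\phi$-marginal under $\P_{n,\bm\beta,U}$ carries the factor $\P(U|\phi)$ --- is real, but proving $\P(U|\phi)$ close to $1$ is not by itself enough: Stein's identity picks up $\partial_{\phi_i}\log\P(U|\phi)$, and a function can be close to $1$ and still have a non-negligible derivative. One must additionally bound the derivative, e.g.\ via $\partial_{\phi_j}\P(k_i<(n-1)t/2\,|\,\phi)=\E[\theta(Y_{ij}-r_{ij})1\{k_i<(n-1)t/2\}\,|\,\phi]$, whose absolute value is $\lesssim\P(k_i<(n-1)t/2\,|\,\phi)$ and hence exponentially small on the bulk; but then the contribution of the complement of the bulk must still be dealt with. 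The paper sidesteps this entirely by the $U\leftrightarrow\widetilde U$ transfer. Second, the Taylor expansion of $\tanh$ produces quadratic terms $(\phi_u-t)(\phi_v-t)$ whose covariances with $\phi_w-t$ enter the error in the linear recursion; the paper has to prove a separate higher-order bound $Cov((\phi_{i_1}-t)(\phi_{i_2}-t),\phi_{i_3}-t\,|\,\widetilde U)\lesssim n^{-2}$ (Lemma~\ref{lem:cov4}) by yet another recursion to make the error in \eqref{eq:Mclaim3} genuinely $O(n^{-2})$. Your plan does not address these higher-order covariances, and without an analogue your ``lower order'' term is not justified. With these two gaps filled, your route should work.
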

We defer the proof of Lemma \ref{lem:cov} to the end of the section.
Finally, use Lemma \ref{2.3} to conclude that 
\begin{align}\label{varboundb}
\max_{1\le i\le n}Var_{\bm \delta}(k_i|U)\lesssim n.
\end{align} 
Given Lemma \ref{lem:cov} along with \eqref{varboundb} and \eqref{RSS},  we have the existence of a constant $C$ free of $n$ such that 
$$R_{S_{1},S_{2}}\leq CWA^{2}n+Cs^{2}A^{2},$$
which along with \eqref{eq:2nd} gives
\begin{align}
\E_{\mathcal{H}_{0},U}L_{\pi,U}^{2}(Y)\leq   \exp\{CA^{2}s^{2}\}\E_{W}\exp{\{CA^{2}nW\}}
\end{align}
where $W$ follows Hypergeometric distribution with parameters $(n,s,s)$. As before, using the fact that $n>2s$, $W$ is stochastically dominated by a binomial distribution with parameters $(s,\frac{s}{n-s})$. This gives
\begin{align}\label{finalbound}
\E_{\mathcal{H}_{0},U}L_{\pi,U}^{2}(Y)\leq\exp{\{CA^{2}s^{2}+\frac{2s^{2}}{n}(e^{CA^{2}n}-1)\}}.
\end{align}

\subsubsection{Proof of Theorem \ref{Nonunique_dense} part (a)}

In this regime we have $s=n^b$ and $A=n^t$ with $b\ge \frac{1}{2}$ and $b+t<0$. This gives  $A^2 s^2 =n^{2t+2b}\to 0.$
Also we have $A^2n=n^{2t+1}\to 0$, and so
$$\frac{s^2}{n} (e^{CA^{2}n}-1)\lesssim s^2A^2=n^{2b+2t}\to 0.$$

Combining the above two displays with \eqref{finalbound}, we have $\E_{\mathcal{H}_{0},U}L_{\pi,U}^{2}(Y)\to 1$, as desired. This completes the proof of part (a).

\subsubsection{Proof of Theorem \ref{Nonunique_dense} part (c)}

In this regime we have $s=n^b$ and $A=n^t$ with $b< \frac{1}{2}$ and $t+\frac{1}{2}<0$. This gives $$A^2 s^2 =n^{2t+2b}\le n^{2t+1} \to 0.$$
Also we have $A^2n=n^{2t+1}\to 0$, and so
$$\frac{s^2}{n} (e^{CA^{2}n}-1)\lesssim s^2A^2=n^{2b+2t}\to 0.$$

Combining the above two displays with \eqref{finalbound}, we have $\E_{\mathcal{H}_{0},U}L_{\pi,U}^{2}(Y)\to 1$, as desired. This completes the proof of part (c).



\subsection{Proof of Lemma \ref{lem:cov}}

We first state two lemmas, which will be used in the proof of Lemma \ref{lem:cov}. The first lemma is the analogue of Lemma \ref{1.3} for $\theta>1/2$.
\begin{lem}\label{2.3}
Suppose $\theta>1/2$, and ${\bm \beta}\in [0, 2n^{-1/2}]$. Then for every positive positive integer $\ell$ we have
\begin{align}
\E_{n,\theta,{\bm \beta}}\Big(|\phi_i-t|^\ell\Big|U\Big) \le C n^{-\ell/2},
\end{align}
where $U$ is as defined in \eqref{eq:u}, and $C$ is a positive constant depending only on $\ell$ and $\theta$.
\end{lem}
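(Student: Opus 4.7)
The plan is to control $\mathbb{E}_{n,\theta,{\bm \beta}}(|\phi_i-t|^\ell\mid U)$ by the triangle decomposition $\phi_i-t=(\phi_i-\bar\phi)+(\bar\phi-t)$ and the auxiliary-variable representation of Proposition \ref{thm:bayes}. Using $|a+b|^\ell\lesssim_\ell |a|^\ell+|b|^\ell$, it is enough to establish the two bounds
$$\mathbb{E}_{n,\theta,{\bm \beta}}\bigl(|\phi_i-\bar\phi|^\ell\,\big|\,U\bigr)\lesssim n^{-\ell/2},\qquad \mathbb{E}_{n,\theta,{\bm \beta}}\bigl(|\bar\phi-t|^\ell\,\big|\,U\bigr)\lesssim n^{-\ell/2},$$
uniformly over ${\bm \beta}\in[0,2n^{-1/2}]^n$. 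This mirrors the strategy for Lemma \ref{1.3} but with the center shifted from $0$ to the strictly convex well $t$, and with the extra subtlety of conditioning on $U$.

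For the first bound, I would exploit the Gaussian spring term in \eqref{definef}--\eqref{definep}: since $\sum_{i<j}\frac{\theta}{4}(\phi_i-\phi_j)^2=\frac{n\theta}{4}\sum_i(\phi_i-\bar\phi)^2$, the density of $\phi$ contains a factor $\exp\{-\frac{n\theta}{4}\sum_i(\phi_i-\bar\phi)^2\}$. The remaining factors in \eqref{definep} depend on $\phi$ only through the pairs $\phi_i+\phi_j$, and differentiating them in $\phi_i-\bar\phi$ (at fixed $\bar\phi$) shows these contributions are bounded perturbations of a Gaussian. An exchangeable-pairs / exponential-tilt argument, conditional on $\bar\phi$ and identical in spirit to the proof of Lemma \ref{1.3} in \cite{Sumit2020two}, then yields the $n^{-\ell/2}$ bound on $\mathbb{E}(|\phi_i-\bar\phi|^\ell\mid\bar\phi)$. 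The restriction to $U$ acts only on $\bar\phi$ (up to $O(n^{-1/2})$ error from the Gaussian smoothing in \eqref{auxil}), so the orthogonal-to-$\mathbf{1}$ fluctuations are unaffected.

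For the second bound, integrating out the $\phi_i-\bar\phi$ directions gives an effective marginal density for $\bar\phi$ of the form $\exp\{-\binom{n}{2}q(\bar\phi)+O(n^{3/2})\}$ times a smooth prefactor. By Lemma \ref{lem:calculus}, for $\theta>1/2$ the function $q$ has strict local minima at $\pm t$ with $q''(\pm t)>0$, while $q$ is bounded strictly away from $q(t)$ outside any fixed neighborhood of $\{\pm t\}$. Conditioning on $U$ forces $k_i\ge(n-1)t/2$ for every $i$, and via the Gaussian coupling $\phi_i=k_i/(n-1)+O(n^{-1/2})$ this excludes the negative well $\bar\phi\approx -t$ with overwhelming probability, pinning $\bar\phi$ in a neighborhood of $+t$. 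A Laplace expansion around $t$ then yields $\mathbb{E}(|\bar\phi-t|^\ell\mid U)\lesssim n^{-\ell}$, which is stronger than required. The bound $\mathbb{P}_{\mathcal{H}_0}(U)\to 1/2$ from \cite[Lem 4.3]{Sumit2020two} guarantees that the conditioning inflates probabilities by at most a bounded factor, and a Radon--Nikodym comparison (the log-likelihood ratio between $\bm\beta$ and $\mathbf{0}$ being $\tfrac12\sum\beta_i k_i=O(n^{1/2}\cdot n)=O(n^{3/2})$, which is subleading) lets one transfer the estimate from $\bm\beta=\mathbf{0}$ to the full range $\bm\beta\in[0,2n^{-1/2}]^n$.

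The main obstacle is the interplay between the double-well structure of $q$ for $\theta>1/2$ and the conditioning. Off the diagonal $\phi_1\mathbf{1}$, one must show that the conditional mass concentrates near the product point $(t,\ldots,t)$ and not on the other well or on intermediate saddles; this requires combining the Laplace estimate for $\bar\phi$ with the Gaussian concentration of $\phi_i-\bar\phi$ to produce an $n$-dimensional concentration statement. All the quantitative ingredients are already set up in \cite{Sumit2020two}, and the small perturbation caused by $\bm\beta\in[0,2n^{-1/2}]^n$ only shifts the effective minimizer by $O(n^{-1/2})$, which is compatible with the target rate.
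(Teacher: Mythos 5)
Your decomposition $\phi_i-t=(\phi_i-\bar\phi)+(\bar\phi-t)$ is a reasonable route, and you correctly identify that conditioning on $U$ should localize $\bar\phi$ near $t$. But the first piece of the decomposition has a genuine gap. You assert that the spring term $\exp\{-\tfrac{n\theta}{4}\sum_i(\phi_i-\bar\phi)^2\}$ dominates so the conditional law of $\phi_i-\bar\phi$ is effectively Gaussian, with the remaining factors acting as \enquote{bounded perturbations}. This is false when $\theta>1$. Conditioning on $\{\phi_j:j\neq i\}$, the second derivative in $\phi_i$ of $\sum_{j\neq i}p_{ij}(\phi_i,\phi_j)$ is $\sum_{j\neq i}\big[\theta-\theta^2\text{sech}^2(\theta(\phi_i+\phi_j)+\tfrac12(\beta_i+\beta_j))\big]$, which near $\phi_i+\phi_j\approx 0$ equals roughly $(n-1)\theta(1-\theta)$ — strictly negative once $\theta>1$. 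The $O(n)$ anti-curvature contributed by the $\log\cosh$ terms can exceed the spring constant, so before localizing to the positive well the conditional density need not be log-concave, and an exchangeable-pairs or exponential-tilting argument has no curvature to leverage. The restriction to $U$ most certainly does not act \enquote{only on $\bar\phi$}: the whole point is that it forces every $\phi_i$ away from the concave region near the origin.

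The paper's proof avoids the decomposition entirely. Restricting to $\phi\in[0,2]^n$, Lemma \ref{lem:calculus2}(c) gives the two-sided bound $\lambda_1'(x-t)^2-\lambda_1'a^2\le q_a(x)-q_a(t(a))\le\lambda_2'(x-t)^2+\lambda_2'a^2$ for $x\in[0,2]$ and $a$ small — the potential becomes strongly convex around $t$ once the negative half-line is cut off. Combined with the spring term via \eqref{definep2}, this yields the quadratic sandwich \eqref{definep3} on the full pairwise potential $p_{ij}(\phi_i,\phi_j)$ centered at $(t,t)$; together with the high-probability event $D=\{\sum_{j\ge 2}(\phi_j-t)^2\le M\}$ from Lemma \ref{2.1}, the conditional $\ell$-th moment of $|\phi_1-t|$ given $\{\phi_j,j\geq 2\}$ reduces to a one-dimensional Gaussian integral, uniformly over $\bm\beta\in[0,2n^{-1/2}]^n$. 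Your closing paragraph correctly flags the double-well structure as the \enquote{main obstacle,} but the resolution is this convexity localization on $[0,2]$, built in before (not after) the concentration estimate. Separately, your Radon--Nikodym transfer from $\bm\beta=\mathbf{0}$ with an $O(n^{3/2})$ log-ratio dismissed as subleading is unsafe: the normalized ratio has exponentially large conditional variance, so moment bounds cannot be transferred this cheaply; the paper instead builds the uniformity over $\bm\beta$ directly into the analytic estimate of Lemma \ref{lem:calculus2}.
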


For stating the second lemma, we require the following definition. Analogous to \eqref{eq:u}, define
\begin{align}\label{eq:ut}
 \widetilde{U}:=\cap_{i=1}^n \widetilde{V}_{i}\quad   \widetilde{V}_{i}:=\Big\{\phi_i\in  \Big[ 0,2\Big]\Big\}.
\end{align}
The next lemma shows that the sets $U$ and $\widetilde{U}$ occur simultaneously with high probability, and so expectations involving $U$ can be transferred to expectations involving $\widetilde{U}$ at a very low cost. This lemma will be used frequently in the rest of this section, sometimes without an explicit mention.

\begin{lem}\label{lem:cov2}
Suppose $\theta>1/2$, and ${\bm \beta}\in [0, 2n^{-1/2}]$.   Then we have the following conclusions:
\begin{enumerate}

\item[(a)]
$\log \P_{n,\theta,{\bm \beta}}(U\Delta \widetilde{U})\lesssim -n.$
\\

\item[(b)]

For any random variable $W$ such that $\E W^2\le 1$, we have
$$\Big|\E W 1\{U\}-\E W1\{\widetilde{U}\}\Big|\le \sqrt{\P_{n,\theta,{\bm \beta}}(U\Delta \widetilde{U})}.$$

\end{enumerate}
\end{lem}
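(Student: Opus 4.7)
\textit{Plan.} Part (b) is an immediate Cauchy--Schwarz argument: since $|\mathbf{1}_U-\mathbf{1}_{\widetilde{U}}|=\mathbf{1}_{U\Delta\widetilde{U}}$,
$$\bigl|\E W\mathbf{1}\{U\}-\E W\mathbf{1}\{\widetilde{U}\}\bigr|\le \E|W|\mathbf{1}_{U\Delta\widetilde{U}}\le (\E W^2)^{1/2}\,\P(U\Delta\widetilde{U})^{1/2}\le\sqrt{\P(U\Delta\widetilde{U})}.$$
For part (a), I will decompose $\P(U\Delta\widetilde{U})=\P(U\setminus\widetilde{U})+\P(\widetilde{U}\setminus U)$ and bound each piece by $e^{-cn}$ for some $c>0$.

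The first piece is controlled by the conditional Gaussian law $\phi_i\mid Y\sim N\bigl(k_i/(n-1),(\theta(n-1))^{-1}\bigr)$ from \eqref{auxil}. On $\{Y\in U\}$, $k_i/(n-1)\in[t/2,1]$, strictly inside $[0,2]$, while the conditional standard deviation is $\Theta(n^{-1/2})$. A Gaussian tail bound gives $\P(\phi_i\notin[0,2]\mid Y)\le 2e^{-cn}$ uniformly on $\{Y\in U\}$ for a constant $c=c(\theta,t)>0$; a union bound over $i$ and integration then give $\P(U\setminus\widetilde{U})\lesssim ne^{-cn}\lesssim e^{-c'n}$. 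As a byproduct of the same estimate, the coupling event $G:=\{\max_i|\phi_i-k_i/(n-1)|\le t/4\}$ satisfies $\P(G^c)\lesssim e^{-cn}$.

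The second piece is the main task. On $G\cap\widetilde{U}\cap U^c$ there exists $j$ with $k_j/(n-1)<t/2$ while $\phi_j\in[0,2]$ and $|\phi_j-k_j/(n-1)|\le t/4$; hence $\phi_j\in[0,3t/4]$. Therefore
$$\P(\widetilde{U}\setminus U)\lesssim e^{-cn}+n\max_{1\le j\le n}\P\bigl(\phi_j\in[0,3t/4],\,\phi\in\widetilde{U}\bigr),$$
and using the near-exchangeability of the coordinates (with ${\bm\beta}=O(n^{-1/2})$ a sub-leading perturbation), it suffices to show $\P(\phi_1\in[0,3t/4],\,\phi\in\widetilde{U})\lesssim e^{-cn}$. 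This is a statement about the marginal density $f_{n,\theta,{\bm\beta}}$ from Proposition~\ref{thm:bayes}: writing
$$-\log f_{n,\theta,{\bm\beta}}(\phi)=\frac{\theta n}{4}\sum_i(\phi_i-\bar{\phi})^2+\sum_{i<j}q\Big(\frac{\phi_i+\phi_j}{2}\Big)+O(\sqrt{n}),$$
the quadratic term forces $\phi_i-\bar{\phi}=O(n^{-1/2})$, and the mean-field term, together with $q$ having a nondegenerate minimum at $t$ by Lemma~\ref{lem:calculus} (with $q''(t)>0$ for $\theta>1/2$), forces $\bar{\phi}$ close to $t$ once we restrict to $\widetilde{U}$. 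The event $\phi_1\in[0,3t/4]$ then moves $\phi$ a constant distance away from $t\mathbf{1}$ in a direction with positive curvature, at an $\Omega(n)$ cost in the exponent.

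The main obstacle is turning this heuristic into a rigorous exponential bound, uniformly over ${\bm\beta}\in[0,2n^{-1/2}]^n$. My planned route is: (i) condition on $\phi_{-1}:=(\phi_2,\ldots,\phi_n)$ lying in a typical set where $\sum_{j>1}(\phi_j-t)^2=O(1)$, which should be a high-probability event obtained by upgrading the polynomial moment bound of Lemma~\ref{2.3} to an exponential tail via a standard large-deviations analysis of $f_{n,\theta,{\bm\beta}}$ on $\widetilde{U}$; (ii) on this typical set, the conditional density of $\phi_1$ given $\phi_{-1}$ is strongly log-concave with curvature of order $n$ around a mode close to $t$, yielding $\P(\phi_1\in[0,3t/4]\mid\phi_{-1})\le e^{-cn}$ directly by a one-dimensional Gaussian-type tail bound.
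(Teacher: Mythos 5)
Your part (b) is exactly the paper's argument (Cauchy--Schwarz). For part (a), your decomposition into $\P(U\setminus\widetilde{U})$ and $\P(\widetilde{U}\setminus U)$, the use of the conditional Gaussian law \eqref{auxil} to dispose of $U\setminus\widetilde{U}$ and of the coupling event $G$, and the reduction of $\widetilde{U}\setminus U$ to the bound $\P(\phi_j\le 3t/4\mid\widetilde{U})\lesssim e^{-cn}$ all match the paper's route (the paper packages this last bound as Lemma~\ref{lem:cov22}).

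The gap is that you only sketch that last bound, and the sketch has two real problems. First, step~(i) proposes to obtain the large-deviation event $\{\sum_{j}(\phi_j-t)^2\le M\}$ by ``upgrading'' the polynomial moment bound of Lemma~\ref{2.3} to an exponential tail. That direction of inference does not work: finite polynomial moments of each $|\phi_i-t|$ at scale $n^{-1/2}$ do not by themselves give an exponential-in-$n$ bound for $\{\sum_i(\phi_i-t)^2>M\}$, and in the paper the dependency is in fact the other way around --- Lemma~\ref{2.1} (the exponential tail for $\sum_i(\phi_i-t)^2$, conditional on $\phi\in[0,2]^n$) is proved directly from the density $f_{n,\theta,{\bm\beta}}$ via the two-sided quadratic bounds of Lemma~\ref{lem:calculus2}(c), and Lemma~\ref{2.3} is then a consequence of it. Second, step~(ii) claims the conditional density of $\phi_1$ given $\phi_{-1}$ is strongly log-concave with curvature of order $n$. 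This is false for $\theta>1/2$: the single-coordinate potential contains $\sum_{j>1}q_{(\beta_1+\beta_j)/2}\big(\tfrac{\phi_1+\phi_j}{2}\big)$, and $q''(x)=2\theta\big(1-2\theta\,\mathrm{sech}^2(2\theta x)\big)$ is negative near the origin when $\theta>1/2$, so the one-body term is non-convex on part of $[0,2]$ and log-concavity fails. This is precisely why the paper confines attention to $\widetilde{U}=\{\phi\in[0,2]^n\}$ and replaces convexity by the \emph{two-sided} quadratic sandwich $\lambda_1[(x-t)^2+(y-t)^2]-O(1/n)\le p_{ij}(x,y)-\text{const}\le \lambda_2[(x-t)^2+(y-t)^2]+O(1/n)$ valid on the box (Lemma~\ref{lem:calculus2}(c) and display \eqref{definep3}), from which the Gaussian-type bound on $\P(\phi_1\le 3t/4\mid\phi_{-1})$ follows without any convexity. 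Until this ingredient is supplied, the proof of part (a) is incomplete.
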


The  proofs of Lemmas \ref{2.3} and \ref{lem:cov2} are deferred to section \ref{sec:appen}. We now prove a correlation bound for higher order terms, which will be used for proved Lemma \ref{lem:cov}.

\begin{lem}\label{lem:cov4}
Suppose $\theta>1/2$, and ${\bm \beta}\in [0, 2n^{-1/2}]$. Then for any pair of indices $\{i_1,i_2,i_3\}$ (not necessarily distinct), we have
$$
Cov_{n,\theta,{\bm \beta}}\Big((\phi_{i_1}-t)(\phi_{i_2}-t), \phi_{i_3}-t|\widetilde{U}\Big)\lesssim n^{-2}.
$$

\end{lem}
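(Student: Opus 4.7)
The strategy is to set up a self-consistency equation for the trilinear covariance via integration by parts against the auxiliary density $f_{n,\theta,\bm\beta}(\phi)$. Write $\psi_i := \phi_i - t$ and, for fixed $(i_1,i_2)$, $\Phi(i_3) := \operatorname{Cov}_{n,\theta,\bm\beta}(\psi_{i_1}\psi_{i_2},\psi_{i_3}\mid\widetilde{U})$. By Lemma \ref{lem:cov2}(a), the density decays exponentially at $\partial\widetilde{U}$, so all boundary contributions arising in integration by parts are $O(e^{-cn})$ and can be absorbed into the error. A Taylor expansion of $-\partial_{i_3}\log f_{n,\theta,\bm\beta}$ about $\phi = (t,\ldots,t)$, using $q'(t) = 0$ from Lemma \ref{lem:calculus}, gives
\begin{equation*}
-\partial_{i_3}\log f_{n,\theta,\bm\beta}(\phi) = na\,\psi_{i_3} + cn\,\bar\psi + S_{i_3}(\phi,\bm\beta),
\end{equation*}
where $a = \theta - \theta^2(1-t^2)$, $c = -\theta^2(1-t^2)$, and crucially $a + c = a - |c| = q''(t)/2 > 0$ for $\theta > 1/2$; the remainder $S_{i_3}$ collects all at-least-quadratic-in-$\psi$ terms together with $\bm\beta$-dependent corrections.

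\textbf{Self-consistency equation.} For any smooth $g$ we have the identity $\mathbb{E}[g\cdot\partial_{i_3}(-\log f_{n,\theta,\bm\beta})\mid\widetilde{U}] = \mathbb{E}[\partial_{i_3}g\mid\widetilde{U}] + O(e^{-cn})$. Applying it once with $g\equiv 1$ yields the scalar relation $na\,\mathbb{E}\psi_{i_3} + cn\,\mathbb{E}\bar\psi + \mathbb{E}S_{i_3} = O(e^{-cn})$, which produces control on $\mathbb{E}\psi_i$. Applying it with $g = \psi_{i_1}\psi_{i_2}$ and then subtracting $\mathbb{E}[\psi_{i_1}\psi_{i_2}]$ times the scalar relation to form centered quantities gives
\begin{equation*}
na\,\Phi(i_3) + c\sum_{k=1}^n \Phi(k) = -\operatorname{Cov}(\psi_{i_1}\psi_{i_2},\,S_{i_3}\mid\widetilde{U}) + \delta_{i_1 i_3}\mathbb{E}\psi_{i_2} + \delta_{i_2 i_3}\mathbb{E}\psi_{i_1} + O(e^{-cn}).
\end{equation*}
Summing over $i_3\in[n]$ closes an equation for $\sum_k\Phi(k)$ with invertible coefficient $n(a+c) = n q''(t)/2 > 0$; plugging the resulting value back into the individual-$i_3$ equation and using the positivity $n(a - |c|) = nq''(t)/2 > 0$ yields an explicit upper bound on $|\Phi(i_3)|$ in terms of the right-hand side quantities.

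\textbf{Bounding the right-hand side.} The key quantitative step is to bound $|\operatorname{Cov}(\psi_{i_1}\psi_{i_2},S_{i_3}\mid\widetilde{U})|\lesssim n^{-1}$ and the $\mathbb{E}\psi_i$ contributions appropriately, so that dividing by the leading factor $na\asymp n$ yields $|\Phi(i_3)|\lesssim n^{-2}$. By Cauchy-Schwarz and the $L^4$-bound $\mathbb{E}[\psi_i^4\mid\widetilde{U}]\lesssim n^{-2}$ from Lemma \ref{2.3}, $\operatorname{Var}(\psi_{i_1}\psi_{i_2}\mid\widetilde{U})\lesssim n^{-2}$. A termwise analysis of $S_{i_3}$, whose summands are of the form $n\psi_{i_3}^2$, $n\psi_{i_3}\bar\psi$, $\sum_j\psi_j^2$, plus bounded $\bm\beta$-polynomials, combined with Jensen's inequality to transfer the bound of Lemma \ref{2.3} to $\bar\phi - t$ (so that $\mathbb{E}|\bar\phi - t|^\ell \lesssim n^{-\ell/2}$), yields $\operatorname{Var}(S_{i_3}\mid\widetilde{U})\lesssim 1$; hence $|\operatorname{Cov}(\psi_{i_1}\psi_{i_2},S_{i_3}\mid\widetilde{U})|\lesssim \sqrt{n^{-2}\cdot 1} = n^{-1}$. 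Combining all contributions through the self-consistency equation gives the claim.

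\textbf{Main obstacle.} The principal difficulty is the careful bookkeeping of $S_{i_3}$, particularly its $\bm\beta$-linear component: at $\phi = (t,\ldots,t)$ this piece has magnitude $O(\sqrt{n})$ (from the contribution $-\frac{\theta(1-t^2)}{2}[(n-2)\beta_{i_3} + n\bar\beta]$), but it is deterministic given $\bm\beta$, so it contributes to $\operatorname{Cov}(\psi_{i_1}\psi_{i_2},S_{i_3})$ only through its fluctuation with $\psi$ via higher-order terms; the $\bm\beta$-quadratic pieces are of size $O(\|\bm\beta\|^2)\lesssim 1$ and absorbed directly. The positivity $a - |c| = q''(t)/2 > 0$, which is precisely the feature distinguishing the regime $\theta > 1/2$ from the critical point, is exactly what renders the self-consistency equation well-posed; without it, one could not close the bound.
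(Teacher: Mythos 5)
Your approach is correct in its broad strokes but genuinely different from the paper's. The paper establishes a self-consistency relation by passing back and forth between $\phi$ and $Y$: it exploits the exact Gaussian law of $\phi|Y$ to convert $\operatorname{Cov}((\phi_{i_1}-t)(\phi_{i_2}-t),\phi_{i_3}-t\mid\widetilde U)$ into $\operatorname{Cov}(k_{i_1,t}k_{i_2,t},k_{i_3,t}\mid U)$, decomposes the $k$'s into edge variables $Y_{ij,t}$, passes to $r_{ij}=\E(Y_{ij}\mid\phi)$, and finally Taylor-expands $r_{ij}$ to return to trilinear $\phi$-covariances, producing a system with a coupling matrix of $\ell_\infty$ row sum $8\theta^3\operatorname{sech}^6(2\theta t)<1$. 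You instead work entirely on the $\phi$ side via the Stein/integration-by-parts identity $\E[g\,\partial_{i_3}(-\log f)\mid\widetilde U]=\E[\partial_{i_3}g\mid\widetilde U]+O(e^{-cn})$ and obtain a linear system whose solvability hinges on $a>0$ and $a+c=q''(t)/2>0$. This is a cleaner derivation of the self-consistency relation; both your $a+c>0$ and the paper's $8\theta^3\operatorname{sech}^6(2\theta t)<1$ encode the same nondegeneracy $q''(t)>0$. Two minor points worth spelling out: the boundary term in the integration by parts is $O(e^{-cn})$ not merely because $\P(\max_i|\phi_i-t|>\delta\mid\widetilde U)$ is exponentially small, but because the conditional density of $\phi_{i_3}$ itself is dominated by a Gaussian $e^{-(n-1)\lambda_1(\phi_{i_3}-t)^2/2}$ (as in display \eqref{definep3}), which is $e^{-\Theta(n)}$ on $\{\phi_{i_3}\in\{0,2\}\}$; and the estimate $\E[|\bar\phi-t|^\ell\mid\widetilde U]\lesssim n^{-\ell/2}$ follows from Lemma~\ref{2.3} by Jensen applied to $|\bar\psi|^\ell\le\frac{1}{n}\sum_i|\psi_i|^\ell$, as you note.

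There is one genuine gap relative to the lemma as stated. When $i_3\in\{i_1,i_2\}$, the $\partial_{i_3}g$ term contributes $\delta_{i_1i_3}\E[\psi_{i_2}\mid\widetilde U]$ to the right-hand side of your self-consistency equation, and Lemma~\ref{2.3} only gives $|\E[\psi_{i}\mid\widetilde U]|\lesssim n^{-1/2}$ (and this is tight when $\bm\beta$ is of order $n^{-1/2}$, because the centering of $\phi$ moves by $t(\bar\beta)-t=O(n^{-1/2})$). Dividing by $na$ then yields $\Phi(i_3)\lesssim n^{-3/2}$ for coinciding indices, not $n^{-2}$. This does not appear to be fixable within your framework without extra cancellation, and in fact a direct computation via $\phi_i\mid Y\sim N(k_i/(n-1),1/(\theta(n-1)))$ gives $\operatorname{Cov}((\phi_i-t)^2,\phi_i-t\mid U)=\operatorname{Cov}(K_i^2,K_i\mid U)+2\sigma_n^2\E[K_i\mid U]$ with $\sigma_n^2\E[K_i\mid U]=O(n^{-3/2})$, so the stated $n^{-2}$ rate for repeated indices seems to be an overclaim in the lemma itself (the paper's proof also only treats $i_1=i_2\ne i_3$ explicitly). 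Fortunately, the $n^{-3/2}$ bound for the $O(n)$ summands with coinciding indices is still enough in the downstream application inside Lemma~\ref{lem:cov}, where the bound is summed over $\sim n^2$ pairs $(a,b)$ and needs only an $O(1)$ total; so you should either restate the bound as $n^{-3/2}$ in the coincident case, or argue (as the paper implicitly does) that only the distinct-index case is actually needed.
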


\begin{proof}
Setting $M(i_1,i_2,i_3):=Cov_{n,\theta,{\bm \beta}}\Big((\phi_{i_1}-t)(\phi_{i_2}-t), \phi_{i_3}-t|\widetilde{U}\Big)$, we claim that
\begin{align}\label{eq:Mclaim}
\max_{1\le i_1,i_2\le n}\Big|M(i_1,i_2,i_3)-\frac{\theta^3 \text{sech}^6(2\theta t)}{(n-1)^3} \sum_{j_1\ne i_1, j_2\ne i_2, j_3\ne i_3}\sum_{u\in (i_1,j_1), v\in (i_2, j_2), w\in (i_3, j_3)} M(u,v,w)\Big|=O(n^{-2}).
\end{align}
We first complete the proof of the lemma, deferring the proof of \eqref{eq:Mclaim}. The above display implies the existence of a constant $C$ free of $n$, such that
\begin{align}\label{eq:Mclaim2}
\max_{1\le i_1,i_2,i_3\le n}\Big|M(i_1, i_2, i_3)-\sum_{1\le j_1, j_2, j_3\le n} B_n\Big((i_1, i_2, i_3), (j_1, j_2, j_3)\Big)\Big|\le \frac{C}{n^2},
\end{align}
where $B_n$ is a symmetric $n^3\times n^3$ matrix with non-negative entries, satisfying
\begin{align*}
\sum_{1\le j_1, j_2, j_3\le n}B_n\Big(
(i_1,i_2,i_3),(j_1,j_2,j_3)\Big) = 8\theta^3 \text{sech}^6(2\theta t)<1.
\end{align*}
Thus the matrix $({\bf I}-B_n)^{-1}$ has $\ell_\infty$ operator norm  equal to $(1-8\theta^3 \text{sech}^6(2\theta t))^{-1}<\infty$, and so \eqref{eq:Mclaim2} gives
$$\max_{1\le i_1,i_2,i_3\le n} |M(i_1,i_2,i_3)|\le C(1-8\theta^3 \text{sech}^6(2\theta t))^{-1} n^{-2},$$
from which the desired conclusion follows.
\\

It thus remains to verify \eqref{eq:Mclaim}. 
There are various possibilities depending on which of the indices $\{i,j,\ell\}$ are distinct. Below we argue the case $i_1=i_2=i$ and $i_3=j$, with $\{i,j\}$ distinct, noting that the bound follows by similar calculations for other choices. To this end, setting $k_{i,t}:=k_i-(n-1)t$ note that $(\phi_i-t|Y)\sim N\Big(\frac{k_{i,t}}{n-1}, \frac{1}{(n-1)\theta}\Big)$. Consequently, we have
\begin{align}
\label{eq:cov-1}
\notag&\P_{n,\theta,{\bm \beta}}(\widetilde{U})Cov_{n,\theta,{\bm \beta}}\Big((\phi_i-t)^2, \phi_j-t|\widetilde{U}\Big)\\
\notag=&\E_{n,\theta,{\bm \beta}} \Big[(\phi_i-t)^2 (\phi_j-t) 1\{\widetilde{U}\}\Big]-\E_{n,\theta,{\bm \beta}} \Big[(\phi_i-t)^21\{\widetilde{U}\}\Big]\E_{n,\theta,{\bm \beta}} \Big[ (\phi_j-t) 1\{\widetilde{U}\}\Big]\\
\notag=&\E_{n,\theta,{\bm \beta}} \Big[(\phi_i-t)^2 (\phi_j-t) 1\{{U}\}\Big]-\E_{n,\theta,{\bm \beta}} \Big[(\phi_i-t)^21\{{U}\}\Big]\E_{n,\theta,{\bm \beta}} \Big[ (\phi_j-t) 1\{{U}\}\Big]+O(e^{-cn})\\
\notag=&\E_{n,\theta,{\bm \beta}} \left[\Big(\frac{k_{i,t}^2}{(n-1)^2}+\frac{1}{(n-1)\theta} \Big) \frac{k_{i,t}}{n-1} 1\{U\}\right]\\
\notag-&\E_{n,\theta,{\bm \beta}} \left[\Big(\frac{k_{i,t}^2}{(n-1)^2}+\frac{1}{(n-1)\theta} \Big)  1\{U\}\right] \E_{n,\theta,{\bm \beta}} \left[\frac{k_{i,t}}{n-1} 1\{U\}\right]+O(e^{-cn})\\
\notag=&\P_{n,\theta,{\bm \beta}}(U) Cov_{n,\theta,{\bm \beta}}\Big(\frac{k_{i,t}^2}{(n-1)^2}+\frac{1}{(n-1)\theta}, \frac{k_{j,t}}{n-1}|U\Big)+O(e^{-cn})\\
\notag=&\frac{1}{(n-1)^3}\P_{n,\theta,{\bm \beta}}(U) Cov_{n,\theta,{\bm \beta}}\Big(k_{i,t}^2,k_{j,t}|U\Big)+O(e^{-cn})\\
=&\frac{1}{(n-1)^3} \P_{n,\theta,{\bm \beta}}(U) \sum_{a_1, a_2 \ne i, b\ne j} Cov\Big(Y_{ia_1,t} Y_{ia_2,t}, Y_{jb,t}|U\Big)+O(e^{-cn}),
\end{align}
where $Y_{ij,t}:=Y_{ij}-t$, and the change from $U$ to $\widetilde{U}$ uses Lemma \ref{lem:cov2} and incurs the cost $O(e^{-cn})$.
Proceeding to estimate the RHS of \eqref{eq:cov-1}, 
set $r_{ij,t}:=\E(Y_{ij}|\phi)$, and for $a_1\ne a_2$ note that
\begin{align}
\label{eq:cov-2}
\notag&\P_{n,\theta,{\bm \beta}}(U)Cov_{n,\theta,{\bm \beta}}\Big(Y_{ia_1,t} Y_{ia_2,t}, Y_{jb,t}|U\Big)\\
\notag=&\E_{n,\theta,{\bm \beta}} (Y_{ia_1,t} Y_{ia_2,t} Y_{jb,t}1\{U\}) -\E_{n,\theta,{\bm \beta}}(Y_{ia_1,t}  Y_{ia_2,t}1\{U\})\E_{n,\theta,{\bm \beta}}(Y_{jb}1\{U\})\\
\notag=&\E_{n,\theta,{\bm \beta}} \Big[Y_{ia_1,t} Y_{ia_2,t} Y_{jb,t}1\{\widetilde{U}\}\Big] -\E_{n,\theta,{\bm \beta}}\Big[Y_{ia_1,t}  Y_{ia_2,t}1\{\widetilde{U}\}\Big]\E_{n,\theta,{\bm \beta}}\Big[Y_{jb,t}1\{\widetilde{U}\}\Big]+O(e^{-cn})\\
\notag=&\E_{n,\theta,{\bm \beta}} \Big[r_{ia_1,t} r_{ia_2,t} r_{jb,t}1\{\widetilde{U}\}\Big] -\E_{n,\theta,{\bm \beta}}\Big[r_{ia_1,t}  r_{ia_2,t}1\{\widetilde{U}\}\Big]\E_{n,\theta,{\bm \beta}}\Big[r_{jb}1\{\widetilde{U}\}\Big]+O(e^{-cn})\\
=&\P_{n,\theta,{\bm \beta}}(\widetilde{U}) Cov_{n,\theta,{\bm \beta}}(r_{ia_1,t} r_{ia_2,t}, r_{jb,t}|\widetilde{U})+O(e^{-cn}).
\end{align}
In the above display, we have again moved from $U$ to $\widetilde{U}$ at a cost $O(e^{-cn})$, using  Lemma \ref{lem:cov2}.
A one term Taylor's series expansion gives
\begin{align}\label{eq:rbound}
\notag r_{ij,t}=&\tanh\Big[\theta(\phi_i+\phi_i)+(\beta_i+\beta_j)\Big]-\tanh(2\theta t)\\
=&\Big[\theta(\phi_i-t+\phi_j-t) +\frac{1}{2}(\beta_i+\beta_j)\Big]\text{sech}^2(2\theta t)+\xi_{ij}, 
\end{align}
where  $$|\xi_{ij}|\lesssim (\phi_i-t)^2+(\phi_j-t)^2+\beta_i^2+\beta_j^2\lesssim (\phi_i-t)^2+(\phi_j-t)^2+n^{-1}.$$
On taking expectations, $Cov_{n,\theta,{\bm \beta}}\Big(r_{ia_1,t} r_{ia_2,t}, r_{jb,t}|\widetilde{U}\Big)$ equals
\begin{align}\label{eq:cov-3}
\theta ^3 \text{sech}^6(2\theta t)\sum_{u\in \{i,a_1\}, v\in \{i, a_2\}, w\in \{j,b\}}Cov_{n,\theta,{\bm \beta}}\Big((\phi_u-t)(\phi_v-t),(\phi_w-t)|\widetilde{U}\Big)+O(n^{-2}),
\end{align}
where we have used Lemma \ref{2.3}. On the other hand, if $a_1=a_2=a$, then using the fact that $Y_{ia,t}^2=1+t^2-2t Y_{ia}=1-t^2-2t Y_{ia,t}$ we have
\begin{align}
\label{eq:cov-1.5}
\notag&-\frac{1}{2t}\P_{n,\theta,{\bm \beta}}(U)Cov_{n,\theta,{\bm \beta}}\Big(Y_{ia,t}^2, Y_{jb,t}|U\Big)\\
\notag=& \P_{n,\theta,{\bm \beta}}(U)Cov_{n,\theta,{\bm \beta}}\Big(Y_{ia,t}, Y_{jb,t}|U\Big)\\
\notag=&\E_{n,\theta,{\bm \beta}} (Y_{ia,t} Y_{jb,t}1\{U\}) -\E_{n,\theta,{\bm \beta}}(Y_{ia,t}1\{U\})\E_{n,\theta,{\bm \beta}}(Y_{jb}1\{U\})\\
\notag=&\E_{n,\theta,{\bm \beta}} \Big[Y_{ia,t} Y_{jb,t}1\{\widetilde{U}\}\Big] -\E_{n,\theta,{\bm \beta}}\Big[Y_{ia,t} 1\{\widetilde{U}\}\Big]\E_{n,\theta,{\bm \beta}}\Big[Y_{jb,t}1\{\widetilde{U}\}\Big]+O(e^{-cn})\\
\notag=&\E_{n,\theta,{\bm \beta}} \Big[r_{ia} r_{jb}1\{\widetilde{U}\}\Big] -\E_{n,\theta,{\bm \beta}}\Big[r_{ia}1\{\widetilde{U}\}\Big]\E_{n,\theta,{\bm \beta}}\Big[r_{jb}1\{\widetilde{U}\}\Big]+O(e^{-cn})\\
=&\P_{n,\theta,{\bm \beta}}(\widetilde{U}) Cov_{n,\theta,{\bm \beta}}(r_{ia}, r_{jb}|\widetilde{U})+O(e^{-cn})=O(n^{-1}),
\end{align}
where the last equality again uses Lemma \ref{2.3}, along with \eqref{eq:rbound}.
Combining \eqref{eq:cov-1}, \eqref{eq:cov-2}, \eqref{eq:cov-3} and \eqref{eq:cov-1.5}  we have
\begin{align*}
&Cov_{n,\theta,{\bm \beta}}\Big((\phi_i-t)^2, \phi_j-t|\widetilde{U}\Big)\\
=&\frac{\theta^3 \text{sech}^6(2 \theta t)}{(n-1)^3}\sum_{a_1, a_2\ne i, b\ne j} \sum_{u\in \{i,a_1\}, v\in \{i, a_2\}, w\in \{j,b\}}Cov_{n,\theta,{\bm \beta}}\Big((\phi_u-t)(\phi_v-t),(\phi_w-t)|\widetilde{U}\Big)+O(n^{-2}),
\end{align*}
which verifies \eqref{eq:Mclaim} for the choice $\{i_1=i_2=i, i_3=j\}$. This completes the proof of the claim.
\end{proof}

\begin{proof}[Proof of Lemma \ref{lem:cov}]

We proceed via a similar argument as in the proof of Lemma \ref{lem:cov4}. Setting $M(i_1,i_2):=Cov_{n,\theta,{\bm \beta}}(k_{i_1},k_{i_2}|U)$ for $1\le  i_1,i_2\le n$, we begin by claiming
{ \begin{align}\label{eq:Mclaim3}
\max_{1\le i_1,i_2\le n}\Big|M(i_1,i_2)-\frac{1}{(n-1)^2}\sum_{j_1\ne i_1, j_2\ne i_2}\sum_{u\in \{i_1,j_1\}, v\in \{i_2,j_2\}}[C_0+C_1(\beta_u+\beta_v)\Big] M(u,v)\Big|=O(1),
\end{align}}
where
\begin{align}\label{eq:C}
C_0:=\theta^2 \text{sech}^4(2\theta t), \quad C_1:=\frac{\theta^2}{2}  \text{sech}^2(2\theta t).
\end{align}
Given \eqref{eq:Mclaim3}, and noting that $Var_{n,\theta,{\bm \beta}}(k_i|U)=O(n)$ by Lemma \ref{2.3}, we conclude
\begin{align}\label{eq:Mclaim4}
\max_{i_1\ne i_2}\Big|M(i_1,i_2)-\sum_{j_1\ne j_2}B_n\Big((i_1,i_2),(j_1,j_2)\Big)M(j_1,j_2)\Big|=O(1),
\end{align}
where $B_n$ is a symmetric $n(n-1)$ matrix with non-negative entries, satisfying
\begin{align*}
\sum_{j_1\ne j_2} B_n\Big((i_1,i_2),(j_1,j_2)\Big)\le 8(C_0+2A)\stackrel{A\to 0}{\to} 8C_0=8\theta^3\text{sech}^6(2\theta t)<1.
\end{align*}
Thus the $\ell_\infty$ operator norm of $({\bf I}-B_n)^{-1}$ converges to $(1-8\theta^3\text{sech}^6(2\theta t))^{-1}<\infty$, which along with \eqref{eq:Mclaim4} gives
\begin{align*}
\max_{i_1\ne i_2}M(i_1,i_2)=O(1),
\end{align*}
as desired.\\

 It thus remains to verify \eqref{eq:Mclaim3}. To this end, 
for any $i\ne j$, we have
\begin{align}\label{eq:cov1}
\notag&\P_{n,\theta,{\bm \beta}}(U)Cov_{n,\theta,{\bm \beta}}(k_i,k_j|U)\\
\notag=&\P_{n,\theta,{\bm \beta}}(U)\sum_{a\ne i, b\ne j} Cov_{n,\theta,{\bm \beta}}(Y_{ia}, Y_{jb}|U)\\
\notag=&\sum_{a\ne i, b\ne j}\left\{\E_{\beta} \Big[Y_{ia}Y_{jb} 1\{U\}\Big]-\E_{\beta}\Big[ Y_{ia} 1\{U\}\Big] \E_{n,\theta,{\bm \beta}}\Big[ Y_{jb} 1\{U\}\Big]\right\}\\
\notag=&\sum_{a\ne i, b\ne j}\left\{\E_{\beta}\Big[ Y_{ia}Y_{jb} 1\{\widetilde{U}\}\Big]-\E_{\beta}\Big[ Y_{ia} 1\{\widetilde{U}\}\Big] \E_{n,\theta,{\bm \beta}} \Big[Y_{jb} 1\{\widetilde{U}\}\Big]\right\}+O(e^{-cn})\\
\notag=&\sum_{a\ne i, b\ne j}\left\{\E_{\beta} \Big[r_{ia} r_{jb}1\{\widetilde{U}\}\Big]-\E_{\beta} \Big[r_{ia} 1\{\widetilde{U}\}\Big] -\E_{\beta} \Big[r_{jb} 1\{\widetilde{U}\}\Big]\right\}+O(e^{-cn})\\
=&\P_{n,\theta,{\bm \beta}}(\widetilde{U})Cov_{n,\theta,{\bm \beta}}(r_{ia}, r_{jb}|\widetilde{U})+O(e^{-cn}).
\end{align}
In the above display, $r_{ij}:=\tanh\Big[\theta (\phi_i+\phi_a)+\frac{1}{2}(\beta_i+\beta_a)\Big]$. 
A Taylor's series expansion gives
\begin{align*}
r_{ij}=&\tanh\Big[\theta (\phi_i+\phi_j)+\frac{1}{2}(\beta_i+\beta_j)\Big] \\
=&\tanh(2\theta t)+\Big[\theta(\phi_i-t+\phi_j-t)+\frac{1}{2}(\beta_i+\beta_j)\Big] \text{sech}^2(2\theta t)\\
+&\frac{1}{2}\Big[\theta(\phi_i-t+\phi_j-t)+\frac{1}{2}(\beta_i+\beta_j)\Big]^2\tanh^{''}(2\theta t) +\xi_{ij},
\end{align*}
where $$|\xi_{ij}|\lesssim |\phi_i-t|^3+|\phi_j-t|^3+|\beta_i|^3+|\beta_j|^3\lesssim  |\phi_i-t|^3+|\phi_j-t|^3+n^{-3/2}.$$
Using the above display we have
\begin{align}
\label{eq:cov3}
Cov_{n,\theta,{\bm \beta}}(r_{ia}, r_{jb}|\widetilde{U})=&\sum_{u\in \{i,a\}, v\in \{j,b\}} \Big[C_0+C_1 (\beta_u+\beta_v) \Big]Cov_{n,\theta,{\bm \beta}}(\phi_u,\phi_v|\widetilde{U})  +O(n^{-2}),
\end{align}
where the bound on the error term uses Lemma \ref{2.3} and Lemma \ref{lem:cov4}.  In the above display, the constants $C_0,C_1$ are as in \eqref{eq:C}.

 Finally, we have
\begin{align}
\label{eq:cov4}
\notag&\P_{n,\theta,{\bm \beta}}(\widetilde{U}) Cov_{n,\theta,{\bm \beta}}(\phi_u,\phi_v|\widetilde{U})\\
\notag=&\E_{n,\theta,{\bm \beta}}\Big[\phi_u \phi_v 1\{\widetilde{U}\}\Big]-\E_{n,\theta,{\bm \beta}}\Big[\phi_u 1\{\widetilde{U}\}\Big]\E_{n,\theta,{\bm \beta}}\Big[\phi_v 1\{\widetilde{U}\}\Big]\\
\notag=&\E_{n,\theta,{\bm \beta}}\Big[\phi_u \phi_v 1\{{U}\}\Big]-\E_{n,\theta,{\bm \beta}}\Big[\phi_u 1\{{U}\}\Big]\E_{n,\theta,{\bm \beta}}\Big[\phi_v 1\{{U}\}\Big]+O(e^{-cn})\\
\notag=&\frac{1}{(n-1)^2} \E_{n,\theta,{\bm \beta}}\Big[k_u k_v 1\{U\}\Big]-\frac{1}{(n-1)^2} \E_{n,\theta,{\bm \beta}}\Big[k_u  1\{U\}\Big]  \E_{n,\theta,{\bm \beta}}\Big[k_v 1\{U\}\Big]+O(e^{-cn})\\
=&\frac{\P_{n,\theta,{\bm \beta}}(U)}{(n-1)^2} Cov_{n,\theta,{\bm \beta}}(k_u,k_v|U)+O(e^{-cn}).
\end{align}
Combining \eqref{eq:cov1},  \eqref{eq:cov3} and \eqref{eq:cov4} we have
\begin{align*}
Cov_{n,\theta,{\bm \beta}}(k_i,k_j)=&\frac{1}{(n-1)^2}\sum_{a\ne i, b\ne j}\sum_{u\in \{i,a\}, v\in \{j,b\}}[C_0+C_1(\beta_u+\beta_v)] Cov_{n,\theta,{\bm \beta}}( k_u, k_v|U)+O(1),
\end{align*}
from which \eqref{eq:Mclaim3} follows. This completes the proof of the lemma.

\end{proof}

\section{Proofs of Auxiliary Variable Lemmas}\label{sec:appen}

\subsection{Proof of Proposition \ref{thm:bayes}}

The conditional distribution $(\phi|Y)$ has a density on $\R^n$ proportional to
\begin{align*}
\exp\Big[-\frac{(n-1)\theta }{2}\sum_{i=1}^n\Big(\phi_i-\frac{k_i}{n-1}\Big)^2\Big]=&\exp\Big[-\frac{(n-1)\theta}{2}\sum_{i=1}^n\phi_i^2+\theta\sum_{i=1}^n\phi_i k_i -\frac{\theta}{2(n-1)}\sum_{i=1}^nk_i^2\Big]\\
=&\exp\Big[-\frac{(n-1)\theta}{2}\sum_{i=1}^n\phi_i^2-\frac{\theta}{2(n-1)}\sum_{i=1}^nk_i^2+\theta\sum_{i<j} Y_{ij}(\phi_i+\phi_j)\Big].
\end{align*}
Since $Y$ has a p.m.f.~proportional to $\exp\Big(\frac{\theta}{2}\sum_{i=1}^nk_i^2\Big)$, the joint distribution of $(Y,\phi)$ has a density on $\{-1,1\}^{n\choose 2}\times \R^n$ proportional to
\begin{align}\label{eq:joint_bayes}
\exp\Big[-\frac{(n-1)\theta}{2}\sum_{i=1}^n\phi_i^2+\theta\sum_{i<j} Y_{ij}(\phi_i+\phi_j)\Big].
\end{align}
\begin{enumerate}
\item[(a)]
From \eqref{eq:joint_bayes}, it follows that conditional on $\phi$ the random variables $\{Y_{ij},1\le i<j\le n\}$ are mutually independent, with $Y_{ij}$ having the distribution as in part (a).

\item[(b)]
Summing over the expression in \eqref{eq:joint_bayes}, the marginal density of $\phi$ is proportional to
\begin{align*}
&\sum_{Y\in \{-1,1\}^{n\choose 2}}\exp\Big[-\frac{(n-1)\theta}{2}\sum_{i=1}^n\phi_i^2+\theta\sum_{i<j} Y_{ij}(\phi_i+\phi_j)\Big]\\
=&2^{n\choose 2}\exp\Big[-\frac{(n-1)\theta}{2}\sum_{i=1}^n\phi_i^2+\log\cosh(\theta (\phi_i+\phi_j))\Big].
\end{align*}
Since the RHS above is proportional to $f_{n,\theta,{\bm \beta}}(\phi)$, the conclusion of part (b) follows.
\end{enumerate}

\subsection{Proof of Lemma \ref{1.3}}

For proving Lemma \ref{1.3}, we need the following two lemmas.

\begin{lem}\label{1.1}
Suppose $\theta=1/2$, and ${\bm \beta}\in [0, n^{-1/2}]$. Then there exists a positive constant $M$ free of $n$, such that
\begin{align}
\log  \P_{n,\theta,{\bm \beta}}(\sum_{i=1}^{n}(\phi_{i}-\Bar{\phi})^{2}>M)\lesssim-n.
\end{align}
\end{lem}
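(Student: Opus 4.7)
The plan is to reduce to the case ${\bm \beta} = {\bf 0}$ via a change of variables that absorbs the ${\bm \beta}$-dependence of the $\log\cosh$ terms into a linear tilt, and then to exploit the quadratic rigidity and quartic well structure of the unperturbed potential. At $\theta = 1/2$, set $\tilde\phi_i := \phi_i + \beta_i$. The identity $\tfrac{\phi_i+\phi_j}{2} + \tfrac{\beta_i+\beta_j}{2} = \tfrac{\tilde\phi_i + \tilde\phi_j}{2}$ eliminates ${\bm \beta}$ from the $\log\cosh$ terms in Proposition~\ref{thm:bayes}(b), while $\tfrac{n-1}{4}\sum_i\phi_i^2$ rewrites as $\tfrac{n-1}{4}\sum_i\tilde\phi_i^2 - L(\tilde\phi) + \text{const}({\bm \beta})$ with linear tilt $L(\tilde\phi) := \tfrac{n-1}{2}\sum_i \beta_i \tilde\phi_i$. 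Consequently, the density of $\tilde\phi$ under $\P_{n,1/2,{\bm \beta}}$ is proportional to $f_{n,1/2,{\bm 0}}(\tilde\phi)\,e^{L(\tilde\phi)}$. Since ${\bm \beta} \in [0,n^{-1/2}]^n$ yields $\sum_i(\beta_i - \bar\beta)^2 \le \sum_i\beta_i^2 \le 1$, the elementary bound $\sum_i(\phi_i - \bar\phi)^2 \le 2\sum_i(\tilde\phi_i - \bar{\tilde\phi})^2 + 2$ reduces the lemma to showing $\log \P_{n,1/2,{\bm \beta}}\bigl(\sum_i(\tilde\phi_i - \bar{\tilde\phi})^2 > M'\bigr) \lesssim -n$ for some constant $M'$ free of $n$.

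Next, decompose $\tilde\phi = \bar{\tilde\phi}\,{\bf 1} + \epsilon$ with $\sum_i\epsilon_i = 0$, so $\sum_i(\tilde\phi_i - \bar{\tilde\phi})^2 = \|\epsilon\|^2$. Using \eqref{definep} at $\theta = 1/2$, the unperturbed potential decomposes as
\[
-\log f_{n,1/2,{\bm 0}}(\tilde\phi) = \tfrac{n}{8}\|\epsilon\|^2 + \sum_{i<j} q\Bigl(\bar{\tilde\phi} + \tfrac{\epsilon_i+\epsilon_j}{2}\Bigr) + C,
\]
with $q$ convex and nonnegative (so that Jensen's inequality yields $\sum_{i<j}q(\cdot) \ge \binom{n}{2}q(\bar{\tilde\phi})$, since $\sum_{i<j}\tfrac{\epsilon_i+\epsilon_j}{2} = 0$). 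Split the tilt as $L(\tilde\phi) = \tfrac{n(n-1)\bar\beta}{2}\bar{\tilde\phi} + \tfrac{n-1}{2}\sum_i(\beta_i - \bar\beta)\epsilon_i$; the fluctuation piece is bounded by $\tfrac{n-1}{2}\,\|{\bm \beta} - \bar\beta{\bf 1}\|\,\|\epsilon\| \le \tfrac{n-1}{2}\|\epsilon\|$ via Cauchy-Schwarz.

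The main estimate expresses $\P_{n,1/2,{\bm \beta}}(\|\epsilon\|^2 > M')$ as a ratio of integrals over $(\bar{\tilde\phi}, \epsilon)$. For the numerator, drop the nonnegative $\sum q$ term, absorb $\tfrac{n-1}{2}\|\epsilon\|$ into half of $\tfrac{n}{8}\|\epsilon\|^2$ by AM-GM (costing $O(n)$ in the exponent), and perform the Gaussian integration on $\{\|\epsilon\|^2 > M'\}$ to pick up a factor $e^{-nM'/32}(C/n)^{(n-1)/2}$. The residual $\bar{\tilde\phi}$-integral $\int e^{-\binom{n}{2}q(\bar{\tilde\phi}) + \frac{n(n-1)\bar\beta}{2}\bar{\tilde\phi}}\,d\bar{\tilde\phi}$ is at most $e^{o(n)}\,n^{-1/2}$ by a Laplace-type bound, using $q(x) \sim x^4/12$ near zero together with $|\bar\beta| \le n^{-1/2}$. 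For the denominator, restrict to the box $|\tilde\phi_i| \le n^{-1/2}$ for all $i$, of volume $2^n n^{-n/2}$; on this box both $-\log f_{n,1/2,{\bm 0}}$ and $|L|$ are $O(n)$, yielding a lower bound of $2^n n^{-n/2} e^{-O(n)}$. The $n$-powers cancel exactly between numerator and denominator, and the ratio collapses to $e^{O(n) - nM'/32}$, which is $e^{-cn}$ once $M'$ is taken sufficiently large.

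The main obstacle is the control of the tilt $L$: any uniform pointwise bound gives only $|L| = O(n^{3/2})$, which would destroy the target $e^{-cn}$ rate. The splitting of $L$ into its $\bar\beta$-mean and $(\beta_i - \bar\beta)$-fluctuation parts is therefore essential, and each piece demands a matching structural feature of $f_{n,1/2,{\bm 0}}$. The mean component is tamed by the quartic well of $q$, which is specific to the critical value $\theta = 1/2$ where $q''(0) = 0$ and $q(x) \asymp x^4$ near zero, so that Laplace's method produces only an $e^{o(n)}$ loss. The fluctuation component is absorbed into the strong $\tfrac{n}{8}\|\epsilon\|^2$ restoring force via AM-GM, contributing $O(n)$ in the exponent. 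Both costs being $O(n)$ or smaller, they are compatible with the desired $e^{-cn}$ tail, which will then be closed by choosing $M'$ large.
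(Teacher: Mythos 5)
Your change of variables $\tilde\phi_i := \phi_i + \beta_i$, which works precisely because $\theta=1/2$ makes $\frac{\phi_i+\phi_j}{2}+\frac{\beta_i+\beta_j}{2}=\frac{\tilde\phi_i+\tilde\phi_j}{2}$, is a nice idea and genuinely different from the paper's route (the paper instead Taylor-expands $\log f_{n,\theta,\bm\beta}-\log f_{n,\theta,\bm 0}$ around $\bar\phi$). The reduction to the tilted potential $f_{n,1/2,\bm 0}(\tilde\phi)e^{L(\tilde\phi)}$, the bound $\sum(\phi_i-\bar\phi)^2\le 2\sum(\tilde\phi_i-\bar{\tilde\phi})^2+2$, the decomposition of $L$ into a $\bar\beta$-mean part and a fluctuation part, and the Jensen step are all correct. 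However, the proof has a genuine gap in the estimate of the residual $\bar{\tilde\phi}$-integral, and as written it does not establish the lemma.

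You claim that $\int_{\mathbb R} e^{-\binom{n}{2}q(z)+\frac{n(n-1)\bar\beta}{2}z}\,dz\le e^{o(n)}n^{-1/2}$. This is false when $\bar\beta$ is as large as $n^{-1/2}$, which is allowed (every $\beta_i$ can equal $n^{-1/2}$). Near zero, $q(z)\approx z^4/12$, so the exponent is $\approx -\tfrac{n^2z^4}{24}+\tfrac{n^{3/2}}{2}z$. Balancing these terms gives a maximizer at $z^*\asymp n^{-1/6}$, where the exponent is $\asymp n^{4/3}$; a Laplace estimate then gives $\int e^{-\binom{n}{2}q(z)+\frac{n(n-1)\bar\beta}{2}z}\,dz\gtrsim e^{cn^{4/3}}$ for some $c>0$, which is super-exponential in $n$. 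On the other hand, your lower bound for the denominator restricts to the box $|\tilde\phi_i|\le n^{-1/2}$, on which $|\bar{\tilde\phi}|\le n^{-1/2}$, so the $\bar{\tilde\phi}$-contribution there is only $\asymp n^{-1/2}e^{\pm O(n)}$. The ratio of $\bar{\tilde\phi}$-contributions is therefore $\gtrsim e^{cn^{4/3}-O(n)}\to\infty$, which swamps the $e^{-nM'/32}$ you gain from the Gaussian $\epsilon$-integral, and the target $e^{-cn}$ tail cannot be recovered by choosing $M'$ large.

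The underlying structural issue is that the $\bar{\tilde\phi}$-integrals in the numerator and denominator cannot be bounded separately: the tilt $\frac{n(n-1)\bar\beta}{2}\bar{\tilde\phi}$ with $\bar\beta\sim n^{-1/2}$ shifts the $\bar{\tilde\phi}$-marginal by a factor $e^{\Theta(n^{4/3})}$, and this enormous factor must be made to cancel exactly. Your approach breaks that cancellation by treating the numerator via Jensen and the denominator via a box restriction — two different $\bar{\tilde\phi}$-weights. The paper avoids this by bounding $-\log f_{n,\theta,\bm 0}$ on \emph{both} sides by the same $\bar\phi$-term plus a $\|\epsilon\|^2$-term, namely $\tfrac{n(n-1)}{2}q(\bar\phi)+\tfrac{n}{8}\|\epsilon\|^2\le -\log f_{n,\theta,\bm 0}\le \tfrac{n(n-1)}{2}q(\bar\phi)+\tfrac{n}{4}\|\epsilon\|^2$, and noting that the $\bar\phi$-tilt from the signal is also a pure function of $\bar\phi$; after an orthogonal change of variables $\psi=O_n\phi$ with $\psi_1=\sqrt n\,\bar\phi$, the entire $\psi_1$-integrand is identical in numerator and denominator and cancels, leaving a $\chi^2_{n-1}$ tail. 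Your proof can be repaired by doing the same thing in the $\tilde\phi$ variables: after the change of variables, apply this two-sided Taylor bound to $-\log f_{n,1/2,\bm 0}(\tilde\phi)$, keep the factor $e^{-\frac{n(n-1)}{2}q(\bar{\tilde\phi})+\frac{n(n-1)\bar\beta}{2}\bar{\tilde\phi}}$ identical in both integrals, let it cancel, absorb the $\frac{n-1}{2}\sum(\beta_i-\bar\beta)\epsilon_i$ term into the $\|\epsilon\|^2$-quadratic via Cauchy--Schwarz and AM--GM (this costs only $e^{O(n)}$), and finish with a $\chi^2$ tail bound.
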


\begin{lem}\label{1.2}
Suppose $\theta=1/2$, and ${\bm \beta}\in [0, n^{-1/2}]$. Then there exists a positive constant $M$ free of $n$, such that
\begin{align}
\log \P_{n,\theta,{\bm \beta}}(|\Bar{\phi}|\geq Mn^{-1/4})\lesssim -n.
\end{align}

\end{lem}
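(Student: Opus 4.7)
The strategy is to use the auxiliary-variable density from Proposition \ref{thm:bayes}(b), decompose the log-density into an ``orthogonal'' fluctuation part plus a ``mean'' part (which inherits the critical quartic behavior of $q$), and show that on $\{|\bar\phi|\ge Mn^{-1/4}\}$ the resulting cost is at least of order $M^4 n$, while the partition function is large enough to only contribute an $O(n)$ correction.

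\textbf{Step 1 (Decomposition).} Writing $\phi_i=\bar\phi+\eta_i$ with $\sum_i\eta_i=0$, I use the identity in \eqref{definep} to decompose
\begin{align*}
-\log f_{n,\theta,\bm\beta}(\phi)=\frac{n\theta}{2}\sum_{i=1}^n(\phi_i-\bar\phi)^2+\sum_{i<j}q\!\left(\tfrac{\phi_i+\phi_j}{2}\right)+\Lambda_n(\phi;\bm\beta),
\end{align*}
where $\Lambda_n(\phi;\bm\beta):=\sum_{i<j}\bigl[\log\cosh(\theta(\phi_i+\phi_j))-\log\cosh\bigl(\theta(\phi_i+\phi_j)+\tfrac{\beta_i+\beta_j}{2}\bigr)\bigr]$. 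The first term is a Gaussian-like cost that Lemma \ref{1.1} already controls, the second is the critical mean-field cost, and the third captures the perturbation from $\bm\beta$.

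\textbf{Step 2 (Quartic lower bound via Jensen).} At $\theta=1/2$ one has $q(x)=x^2/2-\log\cosh x$ with $q''(x)=1-\mathrm{sech}^2(x)\ge0$, so $q$ is convex. Applying Jensen's inequality to the uniform distribution on the $\binom{n}{2}$ pairs $(i,j)$ and using $\binom{n}{2}^{-1}\sum_{i<j}(\eta_i+\eta_j)/2=0$,
\begin{align*}
\sum_{i<j}q\!\left(\tfrac{\phi_i+\phi_j}{2}\right)\ge\binom{n}{2}\,q(\bar\phi).
\end{align*}
A Taylor expansion at zero gives $q(x)=x^4/12+O(x^6)$, and for $|x|$ bounded away from zero $q(x)\gtrsim x^2$; in both ranges $q(x)\gtrsim\min(x^2,x^4)$. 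Hence on $\{|\bar\phi|\ge Mn^{-1/4}\}$ we get $\binom{n}{2}q(\bar\phi)\gtrsim M^4 n$ for $M$ fixed and $n$ large.

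\textbf{Step 3 (Partition function and perturbation).} To convert the pointwise bound into a probability bound, I lower bound $Z_{n,\theta,\bm\beta}$ by restricting the integral to the ``bulk'' set $\{|\bar\phi|\le n^{-1/4},\ \sum(\phi_i-\bar\phi)^2\le 1\}$, on which all three terms in Step 1 are $O(n)$; this yields $\log Z_{n,\theta,\bm\beta}\ge -Cn+\log(\text{vol})$. Dividing the tail integral by this lower bound and using Step 2 gives the desired bound, provided the perturbation $\Lambda_n$ is controlled. The naive estimate $|\Lambda_n|\le\binom{n}{2}\|\bm\beta\|_\infty\lesssim n^{3/2}$ is too weak; the main obstacle is to sharpen it. I would do this by first Taylor-expanding $\log\cosh(\theta(\phi_i+\phi_j)+(\beta_i+\beta_j)/2)$ to first order in $\bm\beta$, obtaining a leading term $\tfrac12\sum_{i<j}(\beta_i+\beta_j)\tanh(\theta(\phi_i+\phi_j))$, and then using Lemma \ref{1.1} to bound $\phi_i+\phi_j=2\bar\phi+O(n^{-1/2})$ with exponentially small failure probability, so that $\tanh(\theta(\phi_i+\phi_j))=O(|\bar\phi|+n^{-1/2})$ on this event. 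Combined with $\|\bm\beta\|_1\le\sqrt n$, this reduces the first-order contribution to $O(n)$ on the bulk region uniformly in the tail parameter, which is absorbed by choosing $M$ large enough that $c M^4-C>0$. The second-order term in $\bm\beta$ is $O(n^2\|\bm\beta\|_\infty^2)=O(n)$ and hence also harmless. This yields $\log\P_{n,\theta,\bm\beta}(|\bar\phi|\ge Mn^{-1/4})\lesssim -n$, as required.
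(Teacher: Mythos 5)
Your high-level plan (decompose the log-density into a Gaussian fluctuation part, a critical mean-field part, and a $\bm\beta$-perturbation; lower bound the mean-field part by $\binom{n}{2}q(\bar\phi)$ via convexity; compare a tail integral to a partition function lower bound) is essentially the same route the paper takes: the paper's inequality \eqref{eq:qq} is exactly the Jensen/convexity bound you use, \eqref{eq:1.12} is your decomposition, and the paper then does an orthogonal change of variables $\psi=O_n\phi$ ($\psi_1=\sqrt n\,\bar\phi$) and invokes $c_1x^4\le q(x)\le c_2 x^4$ on $[-2,2]$ plus Laplace, which serves the same role as your bulk restriction.

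The gap is in Step 3, in the treatment of $\Lambda_n$. Two concrete problems. First, the claim ``$\phi_i+\phi_j=2\bar\phi+O(n^{-1/2})$ with exponentially small failure probability'' is not what Lemma \ref{1.1} yields: Lemma \ref{1.1} controls $\sum_i(\phi_i-\bar\phi)^2\le M$ w.h.p., which only gives $\max_i|\phi_i-\bar\phi|=O(1)$, not $O(n^{-1/2})$. (The refinement $|\phi_i-\bar\phi|\approx n^{-1/2}$ is the content of Lemma \ref{1.3}, whose proof itself relies on the lemma you are proving, so you cannot use it here.) Second, even restricting attention to $|\bar\phi|\le n^{-1/4}$, the first-order contribution $\tfrac12\sum_{i<j}(\beta_i+\beta_j)\tanh(\theta(\phi_i+\phi_j))$ has the leading piece $\tfrac{n-1}{2}\|\bm\beta\|_1\tanh(2\theta\bar\phi)$, which is of order $n\cdot\|\bm\beta\|_1\cdot n^{-1/4}$; with $\|\bm\beta\|_1$ allowed up to $\sqrt n$ this is $O(n^{5/4})$, not the $O(n)$ you assert, so it cannot simply be absorbed by choosing $M$ large.

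The way out (implicit in the paper) is to notice that this leading piece is a function of $\bar\phi$ alone; after the orthogonal transformation it lives purely in the $\psi_1$-marginal and appears \emph{identically} in numerator and denominator. Rather than absorbing it, one must analyze the one-dimensional tilted density $\propto\exp\bigl(-\tfrac{n(n-1)}{2}q(\bar\phi)+\tfrac{(n-1)\|\bm\beta\|_1}{2}\tanh(2\theta\bar\phi)\bigr)$ and show its mass outside $\{|\bar\phi|\le Mn^{-1/4}\}$ is exponentially small. Your bulk normalization does not engage with this tilt, so as written the proof does not close. (As an aside: the coefficient in your Step 1 should be $\tfrac{n\theta}{4}\sum_i(\phi_i-\bar\phi)^2$, since $\sum_{i<j}(\phi_i-\phi_j)^2=n\sum_i(\phi_i-\bar\phi)^2$; this is harmless but worth fixing.)
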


\subsubsection{Proof of Lemma \ref{1.1} }

To begin, use \eqref{eq:p_recall} we get the existence of a finite positive constant $C$ free of $n$ such that
\begin{align}\label{eq:1.11}
\notag&\left|\log f_{n,\theta,{\bm\beta}}-\log f_{n,\theta,{\bm 0}}(\phi)-\frac{(n-1)sA}{2}\tanh( \bar{\phi})\right|\\
\notag\le &2CnA \sum_{i=1}^s|\phi_i-\bar{\phi}|+2CsA\sum_{i=1}^n|\phi_i-\bar{\phi}|+CnsA^2\\
\notag\le& Cn\Big[\delta^2  \sum_{i=1}^s(\phi_i-\bar{\phi})^2+\frac{sA^2}{\delta^2}\Big]+Cs \Big[\delta^2\sum_{i=1}^n(\phi_i-\bar{\phi})^2+\frac{nA^2}{\delta^2}\Big]+CnsA^2\\
\le &Cn\delta^2\sum_{i=1}^n(\phi_i-\bar{\phi})^2+\frac{3CnsA^2}{\delta^2}.
\end{align}
for any $\delta\in (0,1)$, where we use the bound $2ab\le a^2+b^2$ in the third inequality. 
Also, with $q(x)=\frac{x^2}{2}-\log\cosh( x)$ as in \eqref{eq:q}, we have $q''(x)=1 - \text{sech}^2( x)\in [0,1]$, where we use the fact that $\theta=\frac{1}{2}$. A Taylor's series expansion then gives
\begin{align}\label{eq:qq}
\Big(\frac{\phi_i+\phi_j}{2}-\bar{\phi}\Big)q'(\bar{\phi})\le q\Big(\frac{\phi_i+\phi_j}{2}\Big)- q(\bar{\phi})\le \Big(\frac{\phi_i+\phi_j}{2}-\bar{\phi}\Big)q'(\bar{\phi})+\frac{1}{2} \Big(\frac{\phi_i+\phi_j}{2}-\bar{\phi}\Big)^2,
\end{align}
which on summing over $i<j$ and invoking with  \eqref{eq:fh_0}
\begin{align}\label{eq:1.12}
\frac{n(n-1)}{2}q(\bar{\phi})+\frac{n}{8}\sum_{i=1}^n(\phi_i-\bar{\phi})^2\le -\log f_{n,\theta,{\bm 0}}(\phi)\le \frac{n(n-1)}{2}q(\bar{\phi})+\frac{n}{4}\sum_{i=1}^n(\phi_i-\bar{\phi})^2.
\end{align}
This gives
\begin{align}
\label{eq:1.13}
\notag&\P_{n,\theta,{\bm \beta}}(\sum_{i=1}^n(\phi_i-\bar{\phi})^2>M)\\
\notag=&\frac{\int_{\R^n} e^{-f_{n,\theta,{\bm\beta}}(\phi)}1\{\sum_{i=1}^n(\phi_i-\bar{\phi})^2>M\}d\phi}{\int_{\R^n} e^{-f_{n,\theta,{\bm\beta}}(\phi)}d\phi}\\
\le & e^{\frac{3CnsA^2}{\delta^2}}\frac{\int_{\R^n}  \exp\Big(-\frac{n(n-1)}{2}q(\bar{\phi})-\frac{\lambda_1}{2} \sum_{i=1}^n(\phi_i-\bar{\phi})^2\Big)1\{\sum_{i=1}^n(\phi_i-\bar{\phi})^2>M\}d\phi}{\int_{\R^n}  \exp\Big(-\frac{n(n-1)}{2}q(\bar{\phi})-\frac{\lambda_2}{2} \sum_{i=1}^n(\phi_i-\bar{\phi})^2\Big)d\phi},
\end{align}
where $\lambda_1:=\frac{\theta}{2}-2C\delta^2$ and $\lambda_2:=\theta+2C\delta^2$ are positive reals, for the choice $\delta^2:=\frac{\theta}{8C}$.
Let $O_n$ be an orthogonal matrix with first row equal to $n^{-1/2}{\bf 1}$. Then, setting $\psi=O_{n}\phi$ we have 
$\psi_{1}=\sqrt{n}\Bar{\phi}$, and $\sum_{i=2}^{n}\psi_{i}^{2}=\sum_{i=1}^{n}(\phi_{i}-\Bar{\phi})^{2}$.
Using this transformation, the ratio of integrals in the RHS of \eqref{eq:1.13} equals
\begin{align*}
 \frac{\int_{\R^n}  \exp\Big(-\frac{n(n-1)}{2}q(n^{-1/2} \psi_1 )-\frac{\lambda_1}{2} \sum_{i=2}^n\psi_i^2\Big)1\{\sum_{i=2}^n\psi_i^2>M\}d\psi}{\int_{\R^n}  \exp\Big(-\frac{n(n-1)}{2}q(n^{-1/2}\psi_1)-\frac{\lambda_2}{2} \sum_{i=2}^n\psi_i^2\Big)d\psi}
 =\Big(\frac{\lambda_2}{\lambda_1}\Big)^{\frac{n-1}{2}} \P(\chi_{n-1}^2>M\lambda_1).
\end{align*}
The desired conclusion is immediate from standard tail bounds of the $\chi_{n-1}^2$ distribution.

%
%
%

%
\subsubsection{Proof of Lemma \ref{1.2}}
To begin, note that
\begin{align*}
\P_{n,\theta,{\bm \beta}}(|\Bar{\phi}|\geq Mn^{-1/4})= &\P_{n,\theta,{\bm \beta}}(|\Bar{\phi}|\geq 2)+ \P_{n,\theta,{\bm \beta}}(M n^{-1/4}\le |\Bar{\phi}|\le 2),
\end{align*}
where $\log \P_{n,\theta,{\bm \beta}}(|\Bar{\phi}|\geq 2)\lesssim -n$ using \eqref{auxil}. 
Proceeding to bound the first term in the RHS of the above display, using an argument similar to the derivation of \eqref{eq:1.13} we get
\begin{align*}
& \P_{n,\theta,{\bm \beta}}(M n^{-1/4}\le |\Bar{\phi}|\le  2)\\
\le & e^{\frac{3CnsA^2}{\delta^2}}\frac{\int_{\R^n}  \exp\Big(-\frac{n(n-1)}{2}q(\bar{\phi})-\frac{\lambda_1}{2} \sum_{i=1}^n(\phi_i-\bar{\phi})^2\Big)1\{ M n^{-1/4}\le |\bar{\phi}|\le 2\}d\phi}{\int_{\R^n}  \exp\Big(-\frac{n(n-1)}{2}q(\bar{\phi})-\frac{\lambda_2}{2} \sum_{i=1}^n(\phi_i-\bar{\phi})^2\Big)d\phi}\\
= & e^{\frac{3CnsA^2}{\delta^2}}\frac{\int_{\R^n}  \exp\Big(-\frac{n(n-1)}{2}q(n^{-1/2}\psi_1)-\frac{\lambda_1}{2} \sum_{i=2}^n\psi_i^2\Big)1\{M n^{-1/4}\le |n^{-1/2}\psi_1|\le 2\}d\psi}{\int_{\R^n}  \exp\Big(-\frac{n(n-1)}{2}q(n^{-1/2}\psi_1)-\frac{\lambda_2}{2} \sum_{i=2}^n\psi_i^2\Big)d\psi}\\
=&e^{\frac{3CnsA^2}{\delta^2}} \Big(\frac{\lambda_2}{\lambda_1}\Big)^{\frac{n-1}{2}}\frac{\int_{\R}  \exp\Big(-\frac{n(n-1)}{2}q(n^{-1/2}\psi_1)\Big)1\{M n^{-1/4}\le |n^{-1/2}\psi_1|\le 2\}d\psi_1}{\int_{\R}  \exp\Big(-\frac{n(n-1)}{2}q(n^{-1/2}\psi_1)\Big)d\psi_1}\\
\end{align*}
where we use the orthogonal transformation $\phi\mapsto \psi$ introduced in Lemma \ref{1.1} in the last step. Now the function $q(.)$ satisfies $q'(0)=q''(0)=q'''(0)=0$, and $q''''(0)>0$. Since $q(.)$ is continuous and does not vanish anywhere else on $\R$, there exists finite positive constants $c_1,c_2$ such that $c_1 x^4\le q(x)\le c_2 x^4$ for all $x\in [-2,2]$. Using this, the ratio of integrals in the above display can be bounded by
\begin{align*}
\frac{\int_{\R}  \exp\Big(-c_1'\psi_1^4\Big)1\{|\psi_1|\ge M n^{1/4} \}d\psi_1}{\int_{\R}  \exp\Big(-c_2'\psi_1^4\Big)d\psi_1}.
\end{align*}
The desired conclusion follows from the above display using Laplace method for a suitable choice of $M$.
\subsubsection{Proof of Lemma \ref{1.3}}
Without loss of generality, it suffices to work with $\phi_{1}$. For $2\le i\le n$, using \eqref{definep} we have 
\begin{align}
\label{eq:combine}
\notag p_{1i}(\phi_1,\phi_i)=&\frac{1}{8}(\phi_{1}-\phi_{i})^{2}+q\Big(\frac{\phi_{1}+\phi_{i}}{2}\Big)-\log\cosh\Big(\frac{\phi_1+\phi_i}{2}+\frac{\beta_1+\beta_i}{2}\Big)+\log\cosh\Big(\frac{\phi_1+\phi_j}{2}\Big)\\
=&\frac{1}{8}(\phi_{1}-\phi_{i})^{2}+q\Big(\frac{\phi_{1}+\phi_{i}}{2}\Big)-\frac{\beta_1+\beta_i}{2}\tanh(\theta(\phi_1+\phi_i))+O(\beta_1+\beta_i)^2.  
\end{align}
Note that $q''(x)\in [0,1]$, which along with a Taylor's series expansion around $\bar{\phi}_1:=\frac{\sum_{j=2}^n\phi_j}{n-1}$ gives 
\begin{align*}
0\le q\Big(\frac{\phi_1+\phi_i}{2}\Big)-q(\bar{\phi}_1)
-\Big(\frac{\phi_1+\phi_i}{2}-\bar{\phi}_1\Big)q'(\bar{\phi}_1)\le \frac{1}{2}\Big(\frac{\phi_1+\phi_i}{2}-\bar{\phi}_1\Big)^2.
\end{align*}
On adding over $i\in [2,n]$ and using the previous display, this gives
\begin{align}
\label{eq:qqq}
\notag&\frac{n-1}{8}(\phi_1-\bar{\phi}_1)^2+\frac{1}{8}\sum_{i=2}^n(\phi_i-\bar{\phi}_1)^2\\
\notag\le &\frac{\theta}{4}\sum_{i=2}^n(\phi_1-\phi_i)^2+\sum_{i=2}^n q\Big(\frac{\phi_1+\phi_i}{2}\Big)-(n-1)\Big[q(\bar{\phi}_1)+ \frac{1}{2} q'(\bar{\phi}_1)(\phi_1-\bar{\phi}_1)\Big]\\
\le&\frac{n-1}{4}(\phi_1-\bar{\phi}_1)^2+\frac{1}{4}\sum_{i=2}^n(\phi_i-\bar{\phi}_1)^2.
\end{align}
Another Taylor's series approximation gives
\begin{align*}
\tanh\Big(\frac{\phi_1+\phi_i}{2}\Big)=\tanh( \bar{\phi}_1)+\Big(\frac{\phi_1+\phi_i}{2}-\bar{\phi}_1\Big)\text{sech}^2( \bar{\phi}_1)+O(\phi_1+\phi_i-2\bar{\phi}_1)^2,
\end{align*}
which on summing over $i\in [2,n]$ gives 
\begin{align}\label{eq:tanhq}
\notag
&\sum_{i=2}^n \frac{\beta_1+\beta_i}{2}\tanh(\theta(\phi_1+\phi_i))\\
\notag=&\tanh(\bar{\phi}_1)\sum_{i=2}^n \frac{\beta_1+\beta_i}{2}+\frac{1}{4}\sum_{i=2}^n \beta_i(\phi_i-\bar{\phi}_1)\text{sech}^2(\bar{\phi}_1)\\
+&O\left((n-1)A |\phi_1-\bar{\phi}_1|+(n-1)A(\phi_1-\bar{\phi}_1)^2+A\sum_{i=2}^n(\phi_i-\bar{\phi_1)^2}\right).
\end{align}
Combining \eqref{eq:qqq} and \eqref{eq:tanhq} along with \eqref{eq:combine} we get the existence of a positive constant $C$ free of $n$, such that 
\begin{align}\label{eq:combine2}
\notag&\frac{1}{9}\Big[(n-1)(\phi_1-\bar{\phi}_1)^2+\sum_{i=2}^n(\phi_i-\bar{\phi}_1)^2\Big]-C(n-1)A|\phi_1-\bar{\phi}_1|- CnA^2\\
\notag\le &\sum_{i=2}^n p_{1i}(\phi_1,\phi_i)-\varphi(\phi_i, 2\le i\le n)- \frac{n-1}{2} q'(\bar{\phi}_1)(\phi_1-\bar{\phi}_1)\\
\le& \frac{1}{3}\Big[(n-1)(\phi_1-\bar{\phi}_1)^2+\sum_{i=2}^n(\phi_i-\bar{\phi}_1)^2\Big]+C(n-1)A|\phi_1-\bar{\phi}_1|+ CnA^2.
\end{align}
In \eqref{eq:combine2}, we have set
\[\varphi(\phi_i, 2\le i\le n):=(n-1)q(\bar{\phi}_1)-\tanh(\bar{\phi}_1)\sum_{i=2}^n \frac{\beta_1+\beta_i}{2}-\frac{1}{4}\sum_{i=2}^n \beta_i(\phi_i-\bar{\phi}_1)\text{sech}^2(\bar{\phi}_1),\] which
is a function which does not depend on $\phi_1$. 
Set \[D:=\left\{\sum_{i=2}^{n}(\phi_{i}-\Bar{\phi_{1}})^{2}\leq M, |\Bar{\phi_{1}}|\leq Mn^{-1/4}\right\},\] where $M$ is a constant free of $n$ such that $\log \P_{n,\theta,{\bm \beta}}(U^c)\lesssim -n$. The existence of such a constant follows from Lemmas \ref{1.1} and \ref{1.2}. 
Then we have
\begin{align*}
&\E_{n,\theta,{\bm \beta}}|\phi_{1}-\Bar{\phi_{1}}|^{l}=\E_{n,\theta,{\bm \beta}}|\phi_{1}-\Bar{\phi_{1}}|^{l}1_{D}+\E_{n,\theta,{\bm \beta}}|\phi_{1}-\Bar{\phi_{1}}|^{l}1_{D^{c}}\\
\le &\E_{n,\theta,{\bm \beta}} \left(\E_{n,\theta,{\bm \beta}}\Big(|\phi_{1}-\Bar{\phi_{1}}^{l} |\phi_i,2\le i\le n\Big)1_{D}\right)+\sqrt{\E_{n,\theta,{\bm \beta}}|\phi_{1}-\Bar{\phi_{1}}|^{2l}}\sqrt{\P_{n,\theta,{\bm \beta}}(D^{c})}.
\end{align*}
Since $\P_{n,\theta,{\bm \beta}}(D^c)$ decays exponentially, to complete the argument it suffices to show that 
\begin{align}\label{eq:suffice_again}
\sup_{(\phi_2,\ldots,\phi_n)\in D} \E_{n,\theta,{\bm \beta}}\Big(|\phi_{1}-\Bar{\phi_{1}}|^{l} |\phi_i,2\le i\le n\Big)\lesssim n^{-\ell/2}.
\end{align}
Proceeding to show \eqref{eq:suffice_again}, using \eqref{eq:combine2}  we have 
\begin{align}
\notag
&\E_{n,\theta,{\bm \beta}}\Big(|\phi_{1}-\Bar{\phi_{1}}|^{l}\ |\phi_i, 2\le i\le n\Big)\\
\notag
=&\frac{\int_{\R}|\phi_{1}-\Bar{\phi_{1}}|^{l}\exp\Big(-\sum_{i=2}^{n}p_{1i}(\phi_1,\phi_i)\Big)d\phi_{1}}{\int_{\R}\exp\Big(-\sum_{i=2}^{n}p_{1i}(\phi_1,\phi_i)\Big)d\phi_{1}} \\
\notag
\le & \exp\left(\Big(\frac{1}{3}-\frac{1}{9}\Big)\sum\limits_{i=2}^{n}(\phi_{i}-\Bar{\phi_{1}})^{2}+2CnA^2\right) \\
\notag
\times &
\frac{\int_\R |\phi_1-\bar{\phi}_1|^\ell \exp\Big(-\frac{n-1}{3}(\phi_1-\bar{\phi}_1)^2-\frac{n-1}{2}(\phi_1-\bar{\phi}_1)q'(\bar{\phi}_1)+C\sqrt{n-1}|\phi_1-\bar{\phi}_1|\Big)d\phi_1}{\int_\R \exp\Big(-\frac{n-1}{9}(\phi_1-\bar{\phi}_1)^2-\frac{n-1}{2}(\phi_1-\bar{\phi}_1)q'(\bar{\phi}_1)-C\sqrt{n-1}|\phi_1-\bar{\phi}_1|\Big)d\phi_1}\\
\notag
\le & \exp\Big(\frac{2M}{9}+2C \Big)\\
\label{eq:combine3}
\times &
\frac{\int_\R |\phi_1-\bar{\phi}_1|^\ell \exp\Big(-\frac{n-1}{3}(\phi_1-\bar{\phi}_1)^+\frac{cM^3(n-1)}{2n^{3/4}}|\phi_1-\bar{\phi}_1|+C\sqrt{n-1}|\phi_1-\bar{\phi}_1|\Big)d\phi_1}{\int_\R \exp\Big(-\frac{n-1}{9}(\phi_1-\bar{\phi}_1)^2-\frac{cM^3(n-1)}{2 n^{3/4}}|\phi_1-\bar{\phi}_1|-C\sqrt{n-1}|\phi_1-\bar{\phi}_1|\Big)d\phi_1},   
\end{align}
where $c:=\sup_{x\in [-1,1]}\frac{|q'(x)|}{|x|^3}$. By a change of variable, the RHS of \eqref{eq:combine3} becomes
$$\frac{e^{\frac{2M}{9}+2C} }{(n-1)^{\ell/2}}
\frac{\int_\R |x|^\ell e^{-\frac{x^2}{3}+\frac{cM^3\sqrt{n-1}}{2n^{3/4}}|x|+C|x|}dx}{\int_\R e^{-\frac{x^2}{9}-\frac{cM^3\sqrt{n-1}}{2 n^{3/4}}-C|x|}dx}\lesssim n^{-\ell/2},$$
from which \eqref{eq:suffice_again} follows. This completes the proof of the lemma.

\subsection{Proof of Lemma \ref{2.3} }

We begin by proving two lemmas, which will be useful for the proof.

\begin{lem}\label{lem:calculus2}

For every $a\in \R$, define the function $q_a:\R\mapsto \R$ by setting $$q_a(x):=\theta x^2-\log\cosh(2\theta x+a).$$
Denote by $t(a)$ the largest root of the equation $q_a'(x)=0$. Then the following conclusions hold:

\begin{enumerate}
\item[(a)]
The map $t(.)$ is well defined and $\mathcal{C}_1$ on $(-2\delta,2\delta)$, for some $\delta>0$.

\item[(b)]
If $a\ge 0$, then $t(a)$ is the unique global maximizer of $q_a(.)$ in $[0,\infty)$.

\item[(c)]

There exists finite positive reals $\lambda_1',\lambda_2'$ such that for all $x,y\in [0,2]$ and $a\in [0,\delta]$ we have
$$\lambda_1'[(x-t)^2+(y-t)^2]-\lambda_1'a^2\le q_a(x)-q_a(t(a))\le \lambda_2'[(x-t)^2+(y-t)^2]+\lambda_2'a^2.$$
\end{enumerate}
\end{lem}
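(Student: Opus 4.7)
} For part (a), I would apply the implicit function theorem to $F(x,a) := q_a'(x) = 2\theta\bigl(x - \tanh(2\theta x + a)\bigr)$ at the base point $(t,0)$, where $t$ is given by Lemma \ref{lem:calculus}. The crucial non-degeneracy computation is
\[
\partial_x F(t,0) = q_0''(t) = 2\theta - 4\theta^2 \text{sech}^2(2\theta t) = 2\theta\bigl(1 - 2\theta(1-t^2)\bigr),
\]
where I used $\tanh(2\theta t) = t$ so that $\text{sech}^2(2\theta t) = 1 - t^2$. For $\theta > 1/2$, Lemma \ref{lem:calculus} yields $t > 0$ with $t^2 < 1$, and this quantity is forced to be strictly positive (if it vanished, $t$ would be a degenerate critical point, contradicting its role as the unique strict extremum of $q_0$ on $(0,\infty)$). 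The IFT then produces a $\mathcal{C}^1$ map $a \mapsto t(a)$ on some interval $(-2\delta, 2\delta)$ with $t(0) = t$ and $q_a'(t(a)) = 0$; continuity and the isolation of $t$ as the largest positive root persist for small $a$ after shrinking $\delta$.

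For part (b), I would study $h(x) := x - \tanh(2\theta x + a)$ on $[0,\infty)$ for $a \ge 0$. Since $\tanh(2\theta x + a)$ is concave in $x$ on $[0,\infty)$, the map $h$ is convex there, with $h(0) = -\tanh(a) \le 0$ and $h(x) \to +\infty$, so $h$ has at most two zeros, the larger being $t(a)$. A sign analysis of $q_a' = 2\theta h$ — negative just to the left of $t(a)$, positive just to its right — combined with $q_a(x) \to +\infty$ as $x \to \infty$ identifies $t(a)$ as the unique global extremum of $q_a$ on $[0,\infty)$ approached from both sides; in the usual sense this is a global \emph{minimum} (which is also what the quantitative bound in part (c) requires, so I take the statement's ``maximizer'' to be a slip for ``minimizer''). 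The point $x = 0$ is ruled out as a competitor: for $a > 0$ we have $q_a'(0) = -2\theta \tanh(a) < 0$, and for $a = 0$, $q_0''(0) = 2\theta(1 - 2\theta) < 0$, so $0$ is not a local minimum.

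For part (c), the starting point is the Taylor expansion
\[
q_a(x) - q_a(t(a)) = \tfrac{1}{2} q_a''(t(a))(x - t(a))^2 + O\bigl(|x - t(a)|^3\bigr).
\]
Part (a) makes $a \mapsto t(a)$ and $a \mapsto q_a''(t(a))$ continuous on $[0,\delta]$, and $q_0''(t) > 0$, so $q_a''(t(a))$ stays between two positive constants uniformly in $a \in [0,\delta]$. This gives a sharp two-sided quadratic bound in a fixed neighborhood of $t$; on its complement in $[0,2]$ the gap $q_a(x) - q_a(t(a))$ is bounded below by a positive constant (compactness plus part (b)), which can be absorbed into $\lambda_1'$. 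Finally, the substitution $(x - t(a))^2 = (x - t)^2 + O\bigl(a|x - t|\bigr) + O(a^2)$, combined with $2|a||x-t| \le (x-t)^2 + a^2$, accounts for the $\pm \lambda' a^2$ correction terms. The appearance of $(y - t)^2$ on the left with a purely $x$-dependent right-hand side indicates that the lemma is intended to be applied symmetrically — most naturally to the pair potential $p_{ij}(x,y)$ of \eqref{definep} — so that summing the $x$- and $y$-versions of the univariate bound reproduces the displayed two-sided inequality.

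The main technical obstacle is securing \emph{uniformity} in $a$ of the positivity of $q_a''(t(a))$ and of the separation between $t(a)$ and any subsidiary critical points of $q_a$. Both are consequences of the IFT construction in part (a) together with a compactness argument on $[0,\delta]$, but they are the hinge on which the quantitative bound in (c) rests. A secondary subtlety is that $q_a''$ need not be positive on all of $[0,2]$, so the Taylor argument only works locally near $t(a)$; stitching the local quadratic bound with the bounded-below gap on the complement into a single clean inequality with $a^2$ corrections is exactly what fixes the constants $\lambda_1'$ and $\lambda_2'$.
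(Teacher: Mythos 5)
Your proposal is correct and follows essentially the same route as the paper: the implicit function theorem at $(t,0)$ for (a), a sign analysis of $q_a'$ on $[0,\infty)$ for (b) (the paper's own proof likewise treats $t(a)$ as the global \emph{minimizer}, confirming that ``maximizer'' in the statement is a typo), and for (c) a uniform two-sided quadratic comparison over the compact set $[0,\delta]\times[0,2]$ followed by $|t(a)-t|\lesssim |a|$ — the paper packages this last step as positivity and continuity of the ratio $Q(a,x)=\bigl(q_a(x)-q_a(t(a))\bigr)/(x-t(a))^2$ rather than your Taylor-expansion-plus-region-splitting, but the content is identical, and your reading of the spurious $(y-t)^2$ term matches how the lemma is actually invoked. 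The one step to tighten is your justification that $q_0''(t)>0$: a degenerate critical point can still be a strict minimum (consider $x^4$ at the origin), so argue instead that $x\mapsto\tanh(2\theta x)$ is strictly concave on $[0,\infty)$ and hence has slope strictly less than $1$ at its largest positive fixed point $t$, which is exactly $2\theta\,\mathrm{sech}^2(2\theta t)<1$.
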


\begin{lem}\label{2.1}
Suppose $\theta>1/2$, and ${\bm \beta}\in [0, 2n^{-1/2}]^n$. Then there exists a positive constant $M$ depending on $n$ such that
\begin{align}
\log \P_{n,\theta,{\bm \beta}}\Big(\sum_{i=1}^{n}(\phi_{i}-t)^{2}>M\Big|\phi\in [0,2]^n\Big)\lesssim -n.
\end{align}
\end{lem}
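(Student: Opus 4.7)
\textbf{Proof plan for Lemma \ref{2.1}.} The plan is to closely mirror the proof of Lemma \ref{1.1}, with two substitutions: the center of the density is now the positive root $t$ of $q'(x)=0$ (from Lemma \ref{lem:calculus}) rather than $\bar\phi$, and the conditioning event $\{\phi\in[0,2]^n\}$ is needed to pick out the unique local basin of $q$ at $t$ (the other basin sits near $-t$).

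First, I would decompose the log-density using Proposition \ref{thm:bayes} and the explicit form of $p_{ij}$ in \eqref{definep}:
\[
-\log f_{n,\theta,{\bm \beta}}(\phi)=\sum_{i<j}\frac{\theta}{4}(\phi_i-\phi_j)^2+\sum_{i<j} q\Big(\tfrac{\phi_i+\phi_j}{2}\Big)+R_{\bm\beta}(\phi),
\]
where $R_{\bm \beta}(\phi):=\sum_{i<j}\big[\log\cosh(\theta(\phi_i+\phi_j))-\log\cosh(\theta(\phi_i+\phi_j)+\tfrac{1}{2}(\beta_i+\beta_j))\big]$. Since $|\log\cosh|$ is $1$-Lipschitz and $\|{\bm\beta}\|_\infty\le 2n^{-1/2}$, I get $|R_{\bm\beta}(\phi)|\lesssim n^{3/2}\cdot n^{-1/2}=n$ uniformly in $\phi$. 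So up to an additive $O(n)$ term, the density agrees with the null one.

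Next, on $[0,2]^n$ the quadratic bounds from Lemma \ref{lem:calculus2}(c) (specialized to $a=0$) give $\lambda_1'(x-t)^2\le q(x)-q(t)\le\lambda_2'(x-t)^2$ for $x\in[0,2]$. Summing over $i<j$ and setting $a_i:=\phi_i-t$, I use the two identities $\sum_{i<j}(a_i-a_j)^2=n\sum_i(a_i-\bar a)^2$ and $\sum_{i<j}(a_i+a_j)^2=(n-2)\sum_i a_i^2+(\sum_i a_i)^2$. Combining the Dirichlet term $\tfrac{\theta}{4}\sum_{i<j}(a_i-a_j)^2$ with $\tfrac{\lambda_1'}{4}\sum_{i<j}(a_i+a_j)^2$ and applying Cauchy--Schwarz $n\bar a^2\le \sum a_i^2$ to absorb the potentially negative coefficient of $\bar a^2$ (which appears if $\lambda_1'<\theta$), I produce constants $0<c\le C$ free of $n$ with
\[
cn\sum_{i=1}^n(\phi_i-t)^2+\tbinom{n}{2}q(t)-C'n\;\le\;-\log f_{n,\theta,{\bm\beta}}(\phi)\;\le\; Cn\sum_{i=1}^n(\phi_i-t)^2+\tbinom{n}{2}q(t)+C'n
\]
on $[0,2]^n$.

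Finally, writing the conditional probability as a ratio of integrals and applying the bounds of the previous step,
\[
\P_{n,\theta,{\bm\beta}}\Big(\sum_{i=1}^n(\phi_i-t)^2>M\,\Big|\,\phi\in[0,2]^n\Big)\;\le\; e^{2C'n}\cdot\frac{\displaystyle\int_{[0,2]^n}e^{-cn\sum(\phi_i-t)^2}\mathbf{1}\{\sum(\phi_i-t)^2>M\}\,d\phi}{\displaystyle\int_{[0,2]^n}e^{-Cn\sum(\phi_i-t)^2}\,d\phi}.
\]
Changing variables $u_i=\sqrt{cn}\,(\phi_i-t)$ in the numerator and $u_i=\sqrt{Cn}\,(\phi_i-t)$ in the denominator, both integrals decouple into products of one-dimensional Gaussian integrals (the denominator over a neighborhood of $0$ of radius $\gtrsim\sqrt{n}$, hence comparable to the full Gaussian integral for $n$ large), and the numerator reduces to a $\chi_n^2$ tail of the form $\P(\chi_n^2>cnM)$. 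Standard $\chi^2$ tail bounds give $\P(\chi_n^2>cnM)\lesssim e^{-c'' nM}$ for $M$ large. Choosing $M$ large enough that $c''M$ dominates $2C'+\tfrac12\log(C/c)+1$ yields the claimed $\log$-probability bound $\lesssim -n$.

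The main obstacle is the quadratic-form manipulation in the middle step: the Dirichlet energy $\sum_{i<j}(\phi_i-\phi_j)^2$ only controls fluctuations of $\phi$ about its mean, so one cannot directly lower-bound it by $n\sum(\phi_i-t)^2$; the ``offset'' piece coming from $q$ carries the positive-definite information about $\bar\phi-t$ but with a constant that may be smaller than $\theta$. The Cauchy--Schwarz absorption trick is what forces the two contributions to add up to a genuinely positive-definite quadratic form in $\phi-t\mathbf{1}$ with coefficient $\asymp n$; without the conditioning on $[0,2]^n$ this would fail because of the competing minimum of $q$ at $-t$.
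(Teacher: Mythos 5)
There is a genuine gap. Your crude Lipschitz bound on the remainder is off by a factor of $\sqrt n$: the sum defining $R_{\bm\beta}(\phi)$ has $\binom n2\asymp n^2$ terms, each bounded by $\tfrac12(\beta_i+\beta_j)\le 2n^{-1/2}$, so the correct uniform estimate is $|R_{\bm\beta}(\phi)|\lesssim n^{2}\cdot n^{-1/2}=n^{3/2}$, not $n$. The same order persists for the $\phi$-oscillation of $R_{\bm\beta}$: the derivative of $u\mapsto\log\cosh u-\log\cosh(u+b)$ is bounded by $|b|$, so $|R_{\bm\beta}(\phi)-R_{\bm\beta}(t\bm1)|\lesssim n^{-1/2}\sum_{i<j}|\phi_i+\phi_j-2t|\lesssim n^{1/2}\sum_i|\phi_i-t|$, which is of order $n^{3/2}$ on $\{\sum(\phi_i-t)^2\asymp1\}$. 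Carrying this error through, the ratio of integrals picks up an $e^{O(n^{3/2})}$ factor, which swamps the $e^{-cnM}$ you get from the $\chi^2$ tail, and the proof breaks. The source of the problem is that you Taylor-expand the potential about $\phi=t\bm1$, but $t$ is only a critical point of $q=q_0$, not of the perturbed $q_{(\beta_i+\beta_j)/2}$, so a linear-in-$(\phi-t)$ term of coefficient $O(\sqrt n)$ per coordinate appears in the exponent and is not accounted for.

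The paper sidesteps this exactly by Lemma \ref{lem:calculus2} parts (a)--(c) applied with $a=\tfrac12(\beta_i+\beta_j)$ rather than $a=0$: the sandwich is taken around $q_a(t(a))$, the genuine minimum of the perturbed $q_a$, so there is no linear term, the shift $|t(a)-t|\lesssim|a|\lesssim n^{-1/2}$ only contributes a constant of size $O(a^2)=O(1/n)$ per pair (hence $O(n)$ total), and the $\beta$-dependent constant $\sum_{i<j}q_{a_{ij}}(t(a_{ij}))$ cancels exactly in the conditional-probability ratio. You invoked Lemma \ref{lem:calculus2}(c) only for $a=0$; using it for general $a\in[0,\delta]$ is precisely what closes your gap. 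Alternatively, you could salvage your argument by first splitting off the linear part explicitly and then completing the square against the $\asymp n\sum(\phi_i-t)^2$ quadratic: a linear term $O(\sqrt n)|\phi_i-t|$ per coordinate is absorbed into the quadratic at the cost of $O(1)$ per coordinate, i.e.\ $O(n)$ total, which is affordable. But as written, the single claimed estimate $|R_{\bm\beta}(\phi)|\lesssim n$ is false and the proof does not go through.

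A minor point, independent of the gap: your identity-plus-Cauchy--Schwarz manipulation of the global quadratic form is more elaborate than necessary. Once you have per-pair bounds, the elementary identity $\tfrac12[(a-b)^2+(a+b)^2]=a^2+b^2$ already gives $\tfrac{\theta}4(\phi_i-\phi_j)^2+\lambda_1'\big(\tfrac{\phi_i+\phi_j}2-t\big)^2\ge\tfrac{\min(\theta,\lambda_1')}2\big[(\phi_i-t)^2+(\phi_j-t)^2\big]$ pairwise, and summing over $i<j$ immediately gives the $\asymp n\sum_i(\phi_i-t)^2$ form without any absorption trick; this is what the paper does.
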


\subsubsection{Proof of Lemma \ref{lem:calculus2}}

\begin{enumerate}
\item[(a)]
Since $q_a'(x)=2\theta [x-\tanh(2\theta x+a)]$, it follows that $q_a(.)$ has an odd number of roots in $\R$, and so the maximum root is well defined for all $a\in \R$, and satisfies $x=\tanh(2\theta x+a)$. If $a=0$, then the desired conclusion follows from Lemma \ref{lem:calculus}, with $t(0)=t$. Since $q_0'''(t)\ne 0$, we must have $q_0''(t)>0$, and so using Implicit function theorem, there exists $\delta>0$ (depending on $\theta$) such that for all $a\in (-2\delta,2\delta)$ the map $t(.)$ is $\mathcal{C}_1$. 
\\

\item[(b)]

If $a=0$ then the conclusion follows from Lemma \ref{lem:calculus}, and so we assume $a>0$.
Since $q_a(x)\to \infty$ as $x\to\infty$, the function $q_a(.)$ attains a global minima at a finite number in $[0,\infty)$. Also since
$q_a'(0+)=-2\theta\tanh(2\theta+a)<0$,  $0$ is not a minima of $q_a(.)$. Since $q_a(.)$ has a unique root in $(0,\infty)$ for $a>0$, the desired conclusion follows. 
\\

\item[(c)]
Define the function $Q(.,.): [0,\delta]\times [0,2]\mapsto \R$ by setting
 \begin{align*}
 Q(a,t):=&\frac{q_a(x)-q_a(t(a))}{(x-t((a))^2}\text{ if }x\ne t(a),\\
 =&\frac{1}{2}q_a''(t(a))\text{ if }x=t(a).
 \end{align*}
Using part (b) we have $Q(.,.)$ is strictly positive point-wise, as $t(a)$ is the unique global minimizer of $q_a(.)$ in $[0,\infty)$. On the other hand, 
using part (a) we have $Q(.,.)$ is continuous. Since a continuous function on a compact set attains its maximum and minimum, we have
\begin{align}\label{eq:'}
\lambda_1'':=\inf_{a\in [0,\delta], x\in [0,2]}Q(a,t)\le \sup_{a\in [0,\delta], x\in [0,2]}Q(a,t)=:\lambda_2'',
\end{align}
Using \eqref{eq:'} we get
\begin{align*}
q_a(x)-q_a(t(a))\le \lambda_2''[(x-t(a))^2+(y-t(a))^2]
\le  \lambda_2''[(x-t)^2+(y-t)^2]+4\lambda_2''(t(a)-t)^2,\\
q_a(x)-q_a(t(a))\ge \lambda_1''[(x-t(a))^2+(y-t(a))^2]\ge \lambda_1''[(x-t)^2+(y-t)^2]-4\lambda_1''(t(a)-t)^2.
\end{align*}
The desired conclusion then follows on using part (a) to note the existence of $c>0$ such that $|t(a)-t|\le c|a|$ for all $a\in [-\delta,\delta]$. 

\end{enumerate}


\subsubsection{Proof of Lemma \ref{2.1}}

With $p_{ij}(\phi_i,\phi_j)$ as in \eqref{definep} we can write
\begin{align}\label{definep2}
p_{ij}(\phi_i,\phi_j)=\frac{\theta}{4}(\phi_i-\phi_j)^2+q_{\frac{\beta_i+\beta_j}{2}}\Big(\frac{\phi_i+\phi_j}{2}\Big),
\end{align}
where the function $q_a(.)$ is defined in Lemma \ref{lem:calculus2}. 
For $\phi_i, \phi_j\in [0,2]$ and $\beta_i,\beta_j\in [0,2n^{-1/2}]$, using part (c) of Lemma \ref{lem:calculus2} gives
\begin{align*}
\lambda_1'\Big[\frac{\phi_i+\phi_j}{2}-t\Big]^2-\frac{\lambda_1'}{n}\le q_{\frac{\beta_i+\beta_j}{2}}\Big(\frac{\phi_i+\phi_j}{2}\Big)-q_{\frac{\beta_i+\beta_j}{2}}\left(t\Big(\frac{\beta_i+\beta_j}{2}\Big)\right)\le \lambda_2'\Big[\frac{\phi_i+\phi_j}{2}-t\Big]^2+\frac{\lambda_2'}{n}.
\end{align*}
 Using this along with \eqref{definep2}, this gives the existence of finite positive constants $\lambda_1$ and $\lambda_2$, such that 
\begin{align}
\begin{split}\label{definep3}
  \frac{\lambda_1}{2}\Big[(\phi_i-t)^2+(\phi_j-t)^2\Big]-n\lambda_1\le &p_{ij}(\phi_i,\phi_j)-q_{\frac{\beta_i+\beta_j}{2}}\left(t\Big(\frac{\beta_i+\beta_j}{2}\Big)\right)\\
  \le &  \frac{\lambda_2}{2}\Big[(\phi_i-t)^2+(\phi_j-t)^2\Big]+n\lambda_2.
  \end{split}
\end{align}
 Summing over $i<j$ we get
\begin{align*}
\frac{(n-1)\lambda_1}{2}\sum_{i=1}^n(\phi_i-t)^2-n\lambda_1\le &\sum_{i<j}p_{ij}(\phi_i,\phi_j)-\sum_{i<j} q_{\frac{\beta_i+\beta_j}{2}}\left(t\Big(\frac{\beta_i+\beta_j}{2}\Big)\right)\\
\le& \frac{(n-1)\lambda_2}{2}\sum_{i=1}^n(\phi_i-t)^2+n\lambda_2,
\end{align*}
which gives
\begin{align*}
\P_{n,\theta,{\bm \beta}}\Big(\sum_{i=1}^n(\phi_i-t)^2>M\Big|\phi\in [0,2]^n\Big)=&\frac{\int_{\R^n} e^{-\sum_{i<j} p(\phi_i,\phi_i)} 1\{\sum_{i=1}^n(\phi_i-t)^2>M\} d\phi}{\int_{\R^n} e^{-\sum_{i<j} p(\phi_i,\phi_i)}d\phi_1}\\
\le &e^{n(\lambda_1+\lambda_2)}\frac{\int_{\R^n} e^{-\frac{(n-1)\lambda_1 }{2}\sum_{i=1}^n(\phi_i-t)^2} 1\{\sum_{i=1}^n(\phi_i-t)^2>M\} d\phi}{\int_{[0,2]^n} e^{-\frac{(n-1)\lambda_2 }{2}\sum_{i=1}^n(\phi_i-t)^2}  d\phi}\\
\le &e^{n(\lambda_1+\lambda_2)}\Big(\frac{\lambda_2}{\lambda_1}\Big)^{n/2}\frac{\P(\chi_n^2>(n-1)M\lambda_1)}{\P\Big(N(0,\lambda_2^{-1})\in \sqrt{n-1}[-t, 2-t]\Big)^n},
\end{align*}
The desired conclusion then follows on using standard tail bounds of the $\chi_n^2$ distribution.

\subsubsection{Proof of Lemma \ref{2.3}}

 Summing over display \eqref{definep3} for $j\in [2,n]$ gives
\begin{align*}
\frac{\lambda_1}{2}\sum_{j=2}^n(\phi_j-t)^2-\frac{\lambda_1}{2}(\phi_1-t)^2-\lambda_1\le &\sum_{j=2}^n\Big[p_{1j}(\phi_1,\phi_j)-q_{\frac{\beta_1+\beta_j}{2}}(t_1, t_j)\Big]\\
\le &\frac{\lambda_2}{2}\sum_{j=2}^n(\phi_j-t)^2+\frac{(n-1)\lambda_2}{2}(\phi_1-t)^2+\lambda_2.
\end{align*}
Thus, with $D:=\{\sum_{j=2}^n(\phi_j-t)^2\le M\}$, for $(\phi_2,\ldots,\phi_n)\in D\cap [0,2]^{n-1}$ we have
\begin{align*}
&\E_{n,\theta,\bm{\beta}}\Big(|\phi_1-t|^\ell 1\{\phi_1\in [0,2]\} \Big|\phi_i,i\neq 1\Big)\\
 =&\frac{\int_{[0,2]}|\phi_1-t|^\ell \prod\limits_{i=2}^ne^{-p_{1i}(\phi_1,\phi_i)}d\phi_1}{\int_{[0,2]}\prod\limits_{i=2}^ne^{-p_{1i}(\phi_1,\phi_i))}d\phi_1}\\
  \leq &e^{\frac{\lambda_2-\lambda_1}{2}\sum\limits_{i=2}^n(\phi_i-t)^2+\lambda_1+\lambda_2}\frac{\int_{[0,2]} \exp\Big(\frac{-(n-1)\lambda_1 (\phi_1-t)^2}{2}\Big)|\phi_1-t|^\ell d\phi_1}{\int_{[0,2]}  \exp\Big(\frac{-(n-1)\lambda_1 (\phi_1-t)^2}{2}\Big) d\phi_1}\\
  \le &e^{(\lambda_2-\lambda_1)M+\lambda_1+\lambda_2}\sqrt{\frac{\lambda_2}{(n-1)^\ell \lambda_1}}
  \frac{\E |N(0,\lambda_1^{-1})|^\ell}{\P(N\Big(0,\lambda_2^{-1})\in [-t\sqrt{n-1}, (2-t)\sqrt{n-1}]\Big)}.
   \end{align*}
   Since the probability in the denominator above converges to $1$, using tail estimates of the normal distribution we get
   $$\sup_{(\phi_2,\ldots,\phi_n)\in \{ D\cap [0,2]^{n-1}\}} \E\Big[|\phi_1-t|^\ell  1\{\phi_1\in [0,2]\}\Big|\phi_i, 2\le i\le n\Big]\lesssim n^{-\ell/2}.$$ The desired conclusion follows from the last display above, and using Lemma \ref{2.1} to get that $\log \P_{n,\theta,{\bm \beta}}(D^c|\phi\in [0,2]^n)\lesssim -n$.

\subsection{Proof of Lemma \ref{lem:cov2}}

We first prove the following lemma which will be used in proving Lemma \ref{lem:cov2}.

\begin{lem}\label{lem:cov22}
Suppose $\theta>1/2$, and ${\bm \beta}\in [0,2n^{-1/2}]^n$. Then for any $\delta>0$  there exists a constant $c$ such that
$$\log \P_{n,\theta,{\bm \beta}}(\max_{i\in [n]}|\phi_i-t|>\delta|\phi\in [0,2]^n)\le -cn.$$
\end{lem}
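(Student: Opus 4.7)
The plan is to combine the global $L^2$ concentration of Lemma \ref{2.1} with a one-coordinate-at-a-time conditional concentration of $\phi_i$ around $t$, closed by a union bound over $i\in[n]$. First, choose $M$ (by Lemma \ref{2.1}) so that $\log \P_{n,\theta,{\bm \beta}}(D^c \mid \phi\in[0,2]^n)\lesssim -n$, where $D:=\{\sum_{i=1}^n(\phi_i-t)^2\le M\}$. Splitting
\[
\P_{n,\theta,{\bm \beta}}\Big(\max_{i\in[n]}|\phi_i-t|>\delta \,\Big|\, \phi\in[0,2]^n\Big) \le \sum_{i=1}^n \P_{n,\theta,{\bm \beta}}\Big(|\phi_i-t|>\delta,\,D \,\Big|\, \phi\in[0,2]^n\Big) + \P_{n,\theta,{\bm \beta}}(D^c\mid \phi\in[0,2]^n),
\]
reduces the task to showing that each summand in the first term is at most $e^{-cn}$ uniformly in $i$.

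For fixed $i$, the conditional density of $\phi_i$ given $\phi_{\sim i}:=(\phi_j)_{j\ne i}$ and $\{\phi\in[0,2]^n\}$ is proportional to $\exp\big(-\sum_{j\ne i}p_{ij}(\phi_i,\phi_j)\big)1\{\phi_i\in[0,2]\}$. Summing \eqref{definep3} over $j\ne i$ isolates a $\phi_i$-independent additive term $A_i(\phi_{\sim i})$ and produces the sandwich
\[
\tfrac{(n-1)\lambda_1}{2}(\phi_i-t)^2 + \tfrac{\lambda_1}{2}\sum_{j\ne i}(\phi_j-t)^2 - C \le \sum_{j\ne i}p_{ij}(\phi_i,\phi_j) - A_i(\phi_{\sim i}) \le \tfrac{(n-1)\lambda_2}{2}(\phi_i-t)^2 + \tfrac{\lambda_2}{2}\sum_{j\ne i}(\phi_j-t)^2 + C.
\]
When $\P_{n,\theta,{\bm \beta}}(|\phi_i-t|>\delta\mid\phi_{\sim i},\phi\in[0,2]^n)$ is written as a ratio of integrals over $\phi_i$, the factor $e^{-A_i(\phi_{\sim i})}$ cancels exactly, and the residual gap $\exp\big(\tfrac{\lambda_2-\lambda_1}{2}\sum_{j\ne i}(\phi_j-t)^2\big)$ is bounded by $e^{(\lambda_2-\lambda_1)M/2}$ on $D$, hence absorbed into a constant.

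On $D$ the conditional probability is therefore at most a constant multiple of
\[
\frac{\int_{[0,2]\cap\{|\phi_i-t|>\delta\}}\exp\big(-\tfrac{(n-1)\lambda_1}{2}(\phi_i-t)^2\big)\,d\phi_i}{\int_{[0,2]}\exp\big(-\tfrac{(n-1)\lambda_2}{2}(\phi_i-t)^2\big)\,d\phi_i}.
\]
Since $t\in(0,1)\subset(0,2)$ by Lemma \ref{lem:calculus}, the denominator is $\gtrsim n^{-1/2}$ while a standard Gaussian tail bound yields a numerator $\lesssim n^{-1/2}e^{-(n-1)\lambda_1\delta^2/2}$, giving a per-coordinate bound of order $e^{-cn\delta^2}$. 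A union bound over $i\in[n]$, together with the tail bound on $D^c$, then completes the argument. The main obstacle is the careful isolation of $A_i(\phi_{\sim i})$ in the sandwich so that it cancels exactly in the ratio, and the verification that conditioning on $D$ absorbs the residual $\sum_{j\ne i}(\phi_j-t)^2$ discrepancy between the two sides of \eqref{definep3}; once this bookkeeping is in place, the remainder is a routine truncated-Gaussian computation.
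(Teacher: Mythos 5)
Your argument is correct and follows essentially the same route as the paper: in both cases the key steps are (i) invoking Lemma~\ref{2.1} to discard the event $D^c$ at cost $e^{-cn}$, (ii) using the pointwise sandwich \eqref{definep3} on the conditional density of a single coordinate $\phi_i$ given the rest, so that after the $\phi_i$-independent part of the exponent cancels in the ratio, one is left comparing a truncated Gaussian tail to a truncated Gaussian normalizing integral, and (iii) closing by a union bound over $i\in[n]$. The only cosmetic difference is that you define $D$ in terms of all $n$ coordinates and peel off $D^c$ before the union bound, whereas the paper defines $D$ in terms of $\phi_{\sim 1}$ only and does the union bound last; to make your conditional step airtight you should pass from $D$ to $D_i:=\{\sum_{j\ne i}(\phi_j-t)^2\le M\}\supset D$, since only $D_i$ is $\sigma(\phi_{\sim i})$-measurable, but that is a one-line fix and the bound on the residual factor $\exp\bigl(\tfrac{\lambda_2-\lambda_1}{2}\sum_{j\ne i}(\phi_j-t)^2\bigr)$ holds verbatim on $D_i$.
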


\subsubsection{Proof of Lemma \ref{lem:cov22}}
Set $D:=\{\sum_{i=2}^n(\phi_i-t)^2\le M\}$, and 
use \eqref{definep3} to note that for any $\delta>0$ and $(\phi_2,\ldots,\phi_n)\in D\cap [0,2]^{n-1}$ we have
\begin{align*}
&\P_{n,\theta,{\bm \beta}}(|\phi_1-t|>\delta,\phi_1\in [0,2]|\phi_i,2\le i\le n) \\
=&\frac{\int_{[0,2]}1\{|\phi_i-t|>\delta\}\prod\limits_{i=2}^ne^{-p_{1i}(\phi_1,\phi_i)}d\phi_1}{\int_{[0,2]}\prod\limits_{i=2}^ne^{-p_{1i}(\phi_1,\phi_i)}d\phi_1}\\
  \leq &e^{\frac{\lambda_2-\lambda_1}{2}\sum\limits_{i=2}^n(\phi_i-t)^2+\lambda_1+\lambda_2}\frac{\int_{[0,2]} \exp\Big(\frac{-(n-1)\lambda_1 (\phi_1-t)^2}{2}\Big)1\{|\phi_i-t|>\delta\} d\phi_1}{\int_{[0,2]}  \exp\Big(\frac{-(n-1)\lambda_1 (\phi_1-t)^2}{2} \Big)d\phi_1}\\
  \le &e^{(\lambda_2-\lambda_1)M+\lambda_1+\lambda_2}\sqrt{\frac{\lambda_2}{\lambda_1}} 
  \frac{\P\Big(N(0,\lambda_1^{-1})|>\delta\sqrt{n-1}\Big)}{\P(N\Big(0,\lambda_2^{-1})\in [-t\sqrt{n-1}, 2\sqrt{n-1}]\Big)}.
   \end{align*}
   From the above display, we get
   $$\sup_{(\phi_2,\ldots,\phi_n)\in D\cap [0,2]^{n-1}}  \log \P(|\phi_1-t|>\delta 1\{\phi_1\in [0,2]|\phi_i,2\le i\le n)\lesssim -n.$$
 Recalling that $\log\P_{n,\theta,{\bm \beta}}(D^c|\phi\in [0,2]^n)\lesssim -n$, we get $$\log \P_{n,\theta,{\bm \beta}}\Big(|\phi_1-t|>\delta,\Big|\phi\in [0,2]^n\Big)\lesssim -n.$$ A similar argument applies to all co-ordinates of $\phi$, and so a union bound gives
 $$\log \P_{n,\theta,{\bm \beta}}\Big(\max_{i\in [n]}|\phi_i-t|>\delta\Big|\phi\in [0,2]^n\Big)\lesssim -n,$$
 as desired.

\subsubsection{Proof of Lemma \ref{lem:cov2}}
\begin{enumerate}
\item[(a)]
Note that
 \begin{align*}
 \P_{n,\theta,{\bm \beta}}( U\cap \widetilde{U}^c)=& \P_{n,\theta,{\bm \beta}}(\min_{i\in [n]}\phi_i\ge 0, \max_{i\in [n]}k_i<\frac{(n-1)t}{2}\Big)\\
 \le & \P_{n,\theta,{\bm \beta}}\Big(\min_{i\in [n]}\phi_i\ge \frac{3t}{4}, \max_{i\in [n]}k_i<\frac{(n-1)t}{2}\Big)+ \P_{n,\theta,{\bm \beta}}\Big(\min_{i\in [n]}\phi_i\le \frac{3t}{4}|\phi \in [0,2]^n\Big).
 \end{align*}
 The two terms in the RHS above decays exponentially using \eqref{auxil} and Lemma \ref{lem:cov22} respectively, and so
\begin{align*}
\log \P_{n,\theta,{\bm \beta}}( U\cap \widetilde{U}^c)\lesssim -n.
\end{align*}
Also,
\begin{align*}
\log\P_{n,\theta,{\bm \beta}}( U^c\cap \widetilde{U})\lesssim -n
\end{align*}
using \eqref{auxil}. The desired conclusion follows on combining the last two displays.
\\
 
\item[(b)]
This follows on using Cauchy-Schwarz inequality to note that
\begin{align*}
\E_{n,\theta,{\bm \beta}}(W1\{U\}-W1\{\widetilde{U}\})\le &\sqrt{\E_{n,\theta,{\bm \beta}}W^2} \sqrt{ \P_{n,\theta,{\bm \beta}}(U)+\P_{n,\theta,{\bm \beta}}(\widetilde{U})-2\P_{n,\theta,{\bm \beta}}(U\cap \widetilde{U})}\\
\le &\sqrt{\P_{n,\theta,{\bm \beta}}(U\Delta \widetilde{U})}.
\end{align*}

\end{enumerate}


\section*{Acknowledgements}

The authors thank Rajarshi Mukherjee for helpful discussions throughout the project. SM gratefully thanks NSF (DMS 1712037) for support during this research.

\renewcommand\refname{Reference}
\bibliographystyle{plain}
\bibstyle{unsrt}
\bibliography{main}
\end{document}